\definecolor{red}{rgb}{1,0,0}
\definecolor{blue}{rgb}{0,0,1}
\definecolor{green}{rgb}{0,.6,0}
\newtheorem{thm}{Theorem}[section]
\newtheorem{cor}[thm]{Corollary}
\newtheorem{lem}[thm]{Lemma}
\newtheorem{prop}[thm]{Proposition}
\newtheorem{obs}[thm]{Observation}
\newtheorem{rmk}[thm]{Remark}
\theoremstyle{definition}
\theoremstyle{definition}
\newtheorem{defn}[thm]{Definition}
\theoremstyle{definition}
\newtheorem{ex}[thm]{Example}
\newcommand{\F}{\mathcal{F}}
\newcommand{\Z}{\operatorname{Z}}
\newcommand{\bit}{\begin{itemize}}
\newcommand{\eit}{\end{itemize}}
\newcommand{\ben}{\begin{enumerate}}
\newcommand{\een}{\end{enumerate}}
\newcommand{\beq}{\begin{equation}}
\newcommand{\eeq}{\end{equation}}
\newcommand{\bea}{\begin{eqnarray}} 
\newcommand{\eea}{\end{eqnarray}}
\newcommand{\bpf}{\begin{proof}}
\newcommand{\epf}{\end{proof}\ms}
\newcommand{\bmt}{\begin{bmatrix}}
\newcommand{\emt}{\end{bmatrix}}
\newcommand{\ms}{\medskip}
\newcommand{\noi}{\noindent}
\newcommand{\beqs}{\begin{equation*}} 
\newcommand{\eeqs}{\end{equation*}}
\newcommand{\beas}{\begin{eqnarray*}}
\newcommand{\eeas}{\end{eqnarray*}}
\newcommand{\up}[1]{^{(#1)}}
\newcommand{\calf}{\mathcal{F}}
\newcommand{\zf}{\operatorname{\lfloor \operatorname{Z} \rfloor}}
\newcommand{\Zf}{\zf}
\renewcommand{\H}{\operatorname{H}}
\newcommand{\ptz}{\operatorname{pt_{\Z}}}
\newcommand{\pth}{\operatorname{pt}_{\H}}
\newcommand{\thz}{\operatorname{th_{\Z}}}
\newcommand{\thzs}{\operatorname{th}_{\Z}^*}
\newcommand{\thzf}{\operatorname{th_{\zf}}}
\newcommand{\thh}{\operatorname{th}_{\H}}
\newcommand{\thhs}{\operatorname{th}_{\H}^*}
\newcommand{\thht}{\operatorname{th}_{\H}^\times}
\newcommand{\kz}{k_{\Z}}
\newcommand{\kh}{k_{\H}}
\newcommand{\X}{\operatorname{X}}
\newcommand{\thx}{\operatorname{th_{\X}}}
\newcommand{\ptx}{\operatorname{pt}_{\X}}
\newcommand{\srg}{\operatorname{SRG}}
\newcommand{\rev}{\operatorname{rev}}
\newcommand{\Rev}{\operatorname{Rev}}
\newcommand{\Term}{\operatorname{Term}}
\newcommand{\CCRZf}{\operatorname{CCR-\lfloor Z \rfloor}}
\newcommand{\cart}{\, \square \,}
\newcommand{\ol}{\overline}
\newcommand{\mc}{\mathcal}
\newcommand{\minor}{\preceq}
\newcommand{\force}{\rightarrow}
\newcommand{\iso}{\cong}
\renewcommand{\L}{\mc{L}}
\newcommand{\lceill}{\raisebox{2.5pt}{$\lceil$}}
\newcommand{\rceill}{\raisebox{2.5pt}{$\rceil$}}
\title{The Hopping Forcing Rule}
\author{Joshua Carlson \thanks{Department of Mathematics and Computer Science, Drake University, Des Moines, IA, USA (joshua.carlson@drake.edu)} \and John Petrucci \thanks{Department of Mathematics and Statistics, Williams College, Williamstown, MA, USA (john.petrucci507@gmail.com)}}
\date{\today}
\begin{document}

\maketitle

\begin{abstract}
Zero forcing is a combinatorial game played on graphs that can be used to model the spread of information with repeated applications of a color change rule. In general, a zero forcing parameter is the minimum number of initial blue vertices that are needed to eventually color every vertex blue with a given color change rule. Furthermore, the throttling number minimizes the sum of the number of initial blue vertices and the time taken for all vertices to become blue. In 2013, Barioli et al.~added a new rule, called hopping, to existing color change rules in order to demonstrate that the minor monotone floor of various zero forcing parameters is itself, a zero forcing parameter. In this paper, we examine the hopping color change rule independently from the other classic rules. Specifically, we study the hopping forcing number and the hopping throttling number. We investigate the ways in which these numbers are related to various graph theory parameters (such as vertex connectivity and independence number) as well as other zero forcing parameters.
\end{abstract}

\noi {\bf Keywords} Zero forcing, Color change rules, Hopping, Throttling

\noi{\bf AMS subject classification} 05C57, 05C15, 05C50
\section{Introduction}

Zero forcing was originally introduced in \cite{AIM08} as a tool for bounding the maximum nullity of a family of matrices associated with a given graph. One of the key features of zero forcing that makes it an effective tool is that it can be described as a simple game played on graphs whose vertices are colored blue and white. The goal of the game is to iteratively apply a color change rule to change the color of every vertex to blue. Note that such a game can also be used to model the spread of information on a graph where blue vertices have the information and white vertices do not. This motivates the study of zero forcing for its combinatorial properties and it is no surprise that many applications have been found in physics and engineering (see \cite{BH05, BG07,S08}). Since there are numerous variants of the classic zero forcing game, it is easiest to start with some abstract definitions that unify many of the important parameters. We follow most of the graph theoretical notation in \cite{Diestel} and all graphs in this paper are simple, finite, and undirected.

An \emph{abstract color change rule} is a set of conditions that specify when a vertex $v$ can force a white vertex $w$ to become blue. For example, the \emph{standard color change rule}, denoted $\Z$, is that a blue vertex $v$ can force a white vertex $w$ to become blue if $w$ is the unique white neighbor of $v$. Alternatively, the \emph{hopping color change rule}, denoted $\H$, is that a blue vertex $v$ can force a white vertex $w$ to become blue if $v$ has yet to perform a force and each vertex in $N(v)$ is blue. As we continue with the needed abstract definitions, it is useful to keep $\Z$ and $\H$ in mind as they provide some motivating comparisons.

Suppose $\X$ is an abstract color change rule and $G$ is a graph with initial blue set $B \subseteq V(G)$. As we repeatedly apply the color change rule, we can record each force in order (writing $v \overset{\X}{\rightarrow} w$ to indicate that $v$ forced $w$) until no more forces are possible. Note that we can drop the $\X$ in $v \overset{\X}{\rightarrow} w$ if the rule is clear from context. This ordered list is called a \emph{chronological list of forces of $B$} and the un-ordered set of forces in the list is called a \emph{set of forces of $B$}. An \emph{$\X$ forcing chain} of a set of forces $\calf$ is a maximal list of vertices $v_1, \ldots, v_k \in V(G)$ such that $(v_i \overset{\X}{\rightarrow} v_{i+1}) \in \calf$ for each integer $1 \leq i \leq k-1$. Additionally, the set of blue vertices in $V(G)$ after performing a chronological list of forces of $B$ is called an \emph{$\X$ final coloring of $B$}. If $V(G)$ is an $\X$ final coloring of $B$, then $B$ is an \emph{$\X$ forcing set of $G$}. The \emph{$\X$ forcing number of $G$}, denoted $\X(G)$, is the size of a minimum $\X$ forcing set of $G$. For standard zero forcing (and many others), chronological lists and sets of forces of a given subset $B \subseteq V(G)$ are not unique. However, it is important to note that if $\X = \Z$, then the $\X$ final coloring of $B$ is in fact unique. This property is not held by all color change rules (especially $\H$).

In \cite{HHK12}, the authors introduce the concept of propagation for standard zero forcing by performing multiple forces simultaneously in a sequence of time steps. This idea can be made abstract using sets of forces. For a subset $B \subseteq V(G)$ and a set of forces $\calf$ of $B$, define $\calf\up{0} = B$. Then, for each integer $t > 0$, $\calf \up{t}$ is the set of vertices $w \in V(G)$ for which there exists a vertex $v \in \bigcup_{i=0}^{t-1} \calf \up{i}$ such that $v \overset{\X}{\rightarrow} w$ is a valid force in $\calf$ given that $\bigcup_{i=0}^{t-1} \calf \up{i}$ is blue and $\overset{\phantom{}}{V(G)} \setminus \left(\bigcup_{i=0}^{t-1} \calf \up{i}\right)$ is white. In other words, for each $t >0$, $\calf \up{t}$ is the set of vertices that can become blue simultaneously in time step $t$ using $\calf$. The \emph{$\X$ propagation time of $\calf$}, denoted $\ptx(G;\calf)$, is the smallest integer $t$ such that $\bigcup_{i=0}^{t} \calf \up{i} = V(G)$. The \emph{$\X$ propagation time of $B$} is defined as $\ptx(G;B) = \min\{\ptx(G;\calf) \ | \ \calf \text{ is a set of $\X$ forces of }B\}$. Observe that if $B$ is not an $X$ forcing set of $G$, then no set of forces of $G$ colors $V(G)$ blue which means $\ptx(G;B) = \infty$. We can also optimize the balance between the size of an initial set $B \subseteq V(G)$ and its propagation time with the concept of throttling. The \emph{$\X$ throttling number of $G$} is defined as $\thx(G) = \min\{\thx(G;B) \ | \ B \subseteq V(G)\}$ where $\thx(G;B) = |B| + \ptx(G;B)$.

There are some important remarks to be made about the abstract definitions of propagation time. While the preceding definition of $\ptx(G;\calf)$ is perfectly analogous to $\ptz(G; \calf)$ in \cite{HHK12}, this is not the case for $\ptx(G;B)$ and $\ptz(G;B)$. This discrepancy stems from the difference between color change rules like $\Z$ and those like $\H$ mentioned above. In fact, for standard zero forcing, since final colorings are unique to the initial blue set $B$, we can obtain each $B \up{t}$ by starting with $B$ blue and performing all possible simultaneous forces during each time step. This is an attractive feature of many variants of zero forcing (see \cite{BBF13, CHK19, skew, HHK12}) and the need (in general) to minimize over numerous sets of forces to determine $\ptx(G;B)$ could be seen a deterrent for investigating more complex color change rules. One goal of this paper is to change that mindset by studying hopping forcing and the hopping throttling number.

The hopping color change rule was introduced in \cite{BBF13} as one part of another rule called $\CCRZf$. Specifically, the rule $\CCRZf$ is to either apply the standard color change rule or the hopping color change rule. The authors in \cite{BBF13} developed $\CCRZf$ as a way to show that the minor monotone floor of the zero forcing number, defined as $\zf(G) = \min\{\Z(G') \ | \ G' \text{ is a major of }G\}$, is in fact a zero forcing parameter.

\begin{thm}[{\cite[Theorem 2.39]{BBF13}}]\label{thm:ccrzf}
The parameter $\Zf$ is the $\CCRZf$ parameter. In other words, for every graph $G$, $\Zf(G)=\CCRZf(G)$.
\end{thm}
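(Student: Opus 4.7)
The plan is to prove the two inequalities $\CCRZf(G)\leq\Zf(G)$ and $\Zf(G)\leq\CCRZf(G)$ by direct translations between $\CCRZf$-forcing sets of $G$ and standard zero forcing sets of majors of $G$.

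For $\Zf(G)\leq\CCRZf(G)$, I would fix a minimum $\CCRZf$-forcing set $B$ of $G$ with a chronological list of forces $\calf$, and construct a major $G'$ of $G$ as follows: for each hopping force $v\overset{\H}{\rightarrow}w$ appearing in $\calf$, introduce a new vertex $v'$ joined in $G'$ only to $v$ and $w$. Contracting each new edge $vv'$ in $G'$ and deleting the resulting edge to $w$ recovers $G$, so $G$ is a minor of $G'$. I would then verify that $B$ is a standard zero forcing set of $G'$ by exhibiting the forcing sequence obtained from $\calf$ by replacing each hopping force $v\rightarrow w$ with the two standard forces $v\rightarrow v'$ followed by $v'\rightarrow w$, leaving each standard force unchanged. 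Each $v'$ has degree exactly two in $G'$, so the substituted pair of forces succeeds whenever $v$ is blue and $w$ is still white; moreover, since each vertex performs at most one force in a $\CCRZf$-sequence and a hop necessarily occurs before any later action by its hopper, every new vertex adjacent to the active forcer at step $i$ is already blue by the time step $i$ is executed in $G'$. This yields $\Z(G')\leq|B|$, and minimizing over $B$ gives $\Zf(G)\leq\CCRZf(G)$.

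For $\CCRZf(G)\leq\Zf(G)$, I would show that $\CCRZf$ is minor monotone, so that for any major $G'$ of $G$ we have $\CCRZf(G)\leq\CCRZf(G')\leq\Z(G')$; the second inequality is trivial since every standard zero forcing set is a $\CCRZf$-forcing set, and minimizing over majors then gives the bound. Minor monotonicity reduces to checking the three elementary operations. For edge deletion of $e=uv$, the single force of $\calf$ using $uv$ (say the standard force $u\rightarrow v$) can be replaced in $G-e$ by the hopping force $u\overset{\H}{\rightarrow}v$, since $v$ was $u$'s unique white neighbor at that step, so $N_G(u)\setminus\{v\}$ is already blue in $G-e$, and $u$ has not yet performed a force. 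Isolated-vertex deletion is immediate. For edge contraction of $e=uv$ into a vertex $w$, I would translate $\calf$ into a forcing sequence for $G/e$ by merging $u$ and $v$ into $w$, converting a translated force into a hopping force whenever $w$'s combined neighborhood leaves more than one white neighbor, and tailoring the construction of the seed set so that $|B'|\leq|B|$ even when both $u$ and $v$ lie in $B$.

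The main obstacle will be the edge-contraction case. When $w$ inherits white neighbors from both sides of the contracted edge, a translated standard force $w\rightarrow x$ may require promotion to a hopping force, yet hopping is only legal if $w$ has not yet acted, and $w$ is asked to carry out forces originally performed by both $u$ and $v$. Resolving this requires careful case analysis on the order of $u$'s and $v$'s forces in $\calf$, together with a refined construction of $B'$ in the subcase $\{u,v\}\subseteq B$ so that identifying $u$ with $v$ does not shrink the forcing set below what is needed to still be a $\CCRZf$-forcing set.
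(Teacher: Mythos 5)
The paper itself gives no proof of this statement; it is quoted from \cite[Theorem 2.39]{BBF13}, so your attempt has to be measured against the argument in that source rather than anything in this paper. Your first direction, $\Zf(G)\leq\CCRZf(G)$, is correct and is essentially the standard construction: attach one new degree-two vertex per hop, replay the $\CCRZf$ process with each hop $v\rightarrow w$ replaced by $v\rightarrow v'$, $v'\rightarrow w$, and use the fact that under $\CCRZf$ every vertex performs at most one force (after any force, all of the forcer's neighbors are blue forever, and a hopper must not yet have forced), so the only new vertices incident to a forcer are already blue when needed. This gives a major $G'$ of $G$ with $\Z(G')\leq\CCRZf(G)$, as you say.

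The genuine gap is in the other direction, and it sits exactly where you flag it: the edge-contraction case of minor monotonicity, which is the substantive content of the theorem and which you describe but do not prove. Worse, the specific repair you propose does not work as stated: ``converting a translated force into a hopping force whenever $w$'s combined neighborhood leaves more than one white neighbor'' is illegal precisely when it is needed, because a hop by $w$ requires \emph{every} neighbor of $w$ to be blue, and it is the extra white neighbors inherited from the other endpoint that created the problem in the first place; in that situation the translated force $w\rightarrow x$ can be executed neither as a standard force nor as a hop at its original position in $\calf$. There is also a counting obstruction your sketch does not address: under $\CCRZf$ the merged vertex $w$ can perform at most one force in the whole process, while $u$ and $v$ may each perform a force in $\calf$ even when neither lies in $B$ (so $B':=(B\setminus\{u,v\})\cup\{w\}$ already has size $|B|$ and there is no slack to add a second seed vertex); your ``tailoring'' only covers the easy subcase $\{u,v\}\subseteq B$. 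Repairing this requires genuinely reordering or replacing part of the forcing process for $G/e$ (or, what is somewhat cleaner, starting from a hop-free standard zero forcing process on a major realizing $\Zf(G)$ and pushing it through deletions and contractions, so that hops in the minor are spare capacity), and that case analysis is the heart of the cited proof. As written, the inequality $\CCRZf(G)\leq\Zf(G)$ is not established.
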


For this reason, we can abbreviate the rule $\CCRZf$ to $\zf$. In \cite{BBF13}, numerous other graph parameters are shown to be equivalent to a variant of the zero forcing number using some color change rule. Furthermore, the color change rule for many minor monotone floor parameters adapted an existing color change rule by adding hopping. However, hopping was never studied in isolation from the other color change rules. The second goal of this paper is to investigate the connections that hopping forcing has to other important graph parameters. First, we need to address some nuance of the hopping color change rule. In \cite{BBF13}, the terms \emph{active} and \emph{inactive} are used to indicate vertices that are or are not able to force. However, it is useful to further partition the inactive vertices. In the following definition, we borrow some language from volcanology and clarify these concepts.

\begin{defn}
Let $b$ be a blue vertex in $V(G)$ at a particular time $t$ in a hopping forcing process. Then at time $t$, $b$ is \emph{dormant} if it has not yet performed a force but is unable to, \emph{active} if it has not yet performed a force but is able to, or \emph{extinct} if it has already performed a force.
\end{defn}

In Section \ref{sec:hopforcenumber}, we establish some bounds on the hopping forcing number and determine its value for various graph families. Next, we study the hopping throttling number in Section \ref{sec:throttling}. Specifically, we obtain and investigate bounds for $\thh(G)$ in terms of the vertex connectivity and the independence number in Sections \ref{sec:throtlowerbounds} and \ref{sec:throtUpperBounds} respectively. We characterize the graphs with extreme hopping throttling numbers in Section \ref{sec:throtExtreme} and we compare $\thh(G)$ with other throttling numbers in Section \ref{sec:comparingThrottling}. Then, in Section \ref{sec:productThrottling}, we explore product throttling for hopping forcing. Finally, we discuss some directions for future research in Section \ref{sec:conclusion}. 

\section{The hopping forcing number}\label{sec:hopforcenumber}

In this section, to begin our investigation of hopping forcing, we explore the hopping forcing number and its connections to other forcing numbers. First, to start hopping on a graph $G$, we must have one blue vertex $v$ with all of its neighbors colored blue. At minimum, $v$ has $\delta(G)$ neighbors. So, $\H(G)\geq \delta(G)+1$. Further, $V(G)$ is always a hopping forcing set of $G$. Combined, these bounds give us the following observation.

\begin{obs}\label{obs:Hdelta}
For any graph $G$ of order $n$, $\delta(G)+1 \leq \H(G) \leq n$.
\end{obs}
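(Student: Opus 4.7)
The upper bound is immediate: if every vertex of $G$ is colored blue initially, then $V(G)$ is vacuously a hopping forcing set, so $\H(G)\le n$. The substantive direction is the lower bound $\H(G)\ge \delta(G)+1$, and the justification the authors sketch in the paragraph preceding the observation already contains the whole idea. I would simply formalize it as follows.

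Let $B\subseteq V(G)$ be any hopping forcing set of $G$. Split into two cases. If $B=V(G)$, then $|B|=n\ge \delta(G)+1$ since $\delta(G)\le n-1$. Otherwise $B$ is a proper subset of $V(G)$, so at least one hop must occur in any chronological list of forces of $B$. Consider the very first such force $v\overset{\H}{\rightarrow}w$. At time $0$ the blue set is exactly $B$ and no vertex has yet performed a force, so the hopping rule requires $v\in B$ and $N(v)\subseteq B$. Therefore $\{v\}\cup N(v)\subseteq B$, giving
\[
|B|\ \ge\ 1+\deg(v)\ \ge\ 1+\delta(G).
\]
Taking the minimum over all hopping forcing sets $B$ yields $\H(G)\ge \delta(G)+1$, completing the proof.

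There is no real obstacle here; the only subtlety worth flagging is the trivial case $B=V(G)$, where no hop ever occurs and the inequality has to be justified by the bound $\delta(G)\le n-1$ rather than by examining a first force.
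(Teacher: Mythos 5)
Your proof is correct and follows exactly the argument the paper sketches before the observation: $V(G)$ is always a hopping forcing set, and any first hop requires a vertex together with all of its at least $\delta(G)$ neighbors to be blue. The only addition is your explicit handling of the case $B=V(G)$, which is a minor tidying of the same approach rather than a different one.
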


Suppose $\H(G)=1$. Then, starting with one vertex colored blue, we must be able to force the vertices of $G$, necessarily one in each time step, until all of them are colored blue. This means that at no point can a vertex we force be adjacent to another vertex, since otherwise it would be dormant. This implies $G$ is isomorphic to $\ol{K_n}$. Noting that $\H(\ol{K_n})=1$, we have that $\H(G)$ takes on its minimum value of $1$ if and only if $G\iso\ol{K_n}$. We derive a similar result concerning the maximum value of $\H(G)$.

\begin{prop}\label{prop:hkn}
For any graph $G$ of order $n$, $\H(G)=n$ if and only if $G\iso K_n$.
\end{prop}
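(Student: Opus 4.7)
The plan is to prove both directions directly from the definition of the hopping color change rule, using two non-adjacent vertices to drive the contrapositive of the ``only if'' direction.

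For the ``if'' direction, I would observe that in $K_n$ every vertex has all other $n-1$ vertices as neighbors. Hence, for any blue vertex $v$ to perform a hopping force on some white vertex $w$, every neighbor of $v$ must be blue, which in $K_n$ means every vertex other than $v$ is blue. But then there is no white vertex available to be forced. Consequently, no hopping force can ever occur from a proper subset of $V(K_n)$, so the only hopping forcing set is $V(K_n)$ itself, yielding $\H(K_n) = n$.

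For the ``only if'' direction I would argue the contrapositive: suppose $G \not\iso K_n$, and exhibit a hopping forcing set of size $n-1$. Since $G$ is not complete, there exist two non-adjacent vertices $u, w \in V(G)$. Let $B = V(G) \setminus \{w\}$, so $|B| = n-1$. Then $u \in B$ is blue, $u$ has not yet performed a force, and every neighbor of $u$ is blue because the only white vertex $w$ is not adjacent to $u$. Thus $u \overset{\H}{\rightarrow} w$ is a valid hopping force, and after it is applied the entire vertex set is blue. Hence $B$ is a hopping forcing set and $\H(G) \leq n - 1 < n$.

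Combining these two directions (together with the upper bound $\H(G) \leq n$ from Observation~\ref{obs:Hdelta}) gives the equivalence. The argument is almost entirely a direct unpacking of the hopping rule, so I do not expect any real obstacle; the one subtle point is simply recognizing that non-adjacency of $u$ and $w$ is exactly what makes every neighbor of $u$ blue in the coloring $B$, which is precisely the hopping condition required for $u$ to force $w$.
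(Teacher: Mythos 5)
Your proof is correct and is essentially the same argument as the paper's: your contrapositive (a non-adjacent pair $u,w$ yields the hopping forcing set $V(G)\setminus\{w\}$ of size $n-1$) is exactly the paper's observation that if $\H(G)=n$ then every vertex must be universal, just stated in the reverse direction. Your forward direction also matches, since showing no force can ever occur in $K_n$ from a proper blue subset is the same fact the paper packages as the bound $\H(K_n)\geq\delta(K_n)+1=n$ from Observation~\ref{obs:Hdelta}.
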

\begin{proof}
First, by Observation \ref{obs:Hdelta}, note $\H(K_n)\leq n$ and $\H(K_n)\geq\delta(G)+1=(n-1)+1=n$, so $\H(K_n)=n$. Assume $\H(G)=n$. Then, any set $B$ of $n-1$ vertices being blue cannot force the remaining vertex $v$ to become blue. If any vertex in $B$ was not adjacent to a white vertex, they would be able to force the last vertex. Thus, every vertex in $B$ is adjacent to $v$ and so $v$ is a universal vertex. Since this is true for any set $B$ of $n-1$ vertices, every vertex is universal, and so $G$ must be $K_n$.
\end{proof}

By making use of these elementary bounds, we can readily derive the hopping forcing number for several classic families of graphs (paths, cycles, wheels, and stars). Specifically, combining the lower bound based on the minimum degree from Observation \ref{obs:Hdelta} with the hopping forcing sets illustrated in Figure \ref{fig:easyHvalues}, we arrive at the following four observations.

\begin{obs}\label{obs:Hpath}
For all $n\geq2$, $\H(P_n)=2$.
\end{obs}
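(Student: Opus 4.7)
The observation asserts the value of $\H(P_n)$ for every $n \ge 2$, so the proof has to establish both a matching lower bound and a matching upper bound. My plan is to rely on Observation \ref{obs:Hdelta} for the lower bound and to produce an explicit hopping forcing set of size two to achieve the upper bound.

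For the lower bound I would simply note that $\delta(P_n) = 1$ for every $n \ge 2$ (the endpoints have degree one; when $n=2$ every vertex has degree one), so Observation \ref{obs:Hdelta} gives $\H(P_n) \ge \delta(P_n) + 1 = 2$.

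For the upper bound I would label the vertices of $P_n$ as $v_1, v_2, \ldots, v_n$ along the path and take $B = \{v_1, v_2\}$ as my candidate hopping forcing set. The cases $n=2$ and $n=3$ are immediate: $B = V(P_2)$, and for $n=3$ the active vertex $v_1$ (whose only neighbor $v_2$ is blue) hops to force the isolated white vertex $v_3$. For $n\ge 4$ I would describe the chronological list of forces $v_1 \overset{\H}{\to} v_3,\; v_2 \overset{\H}{\to} v_4,\; v_3 \overset{\H}{\to} v_5,\; \ldots,\; v_{n-2} \overset{\H}{\to} v_n$ and verify it inductively: after the $k$th force the blue set is $\{v_1,\ldots,v_{k+2}\}$, the vertices $v_1,\ldots,v_{k-1}$ are extinct, the vertex $v_k$ has $N(v_k)=\{v_{k-1},v_{k+1}\}$ entirely blue and has not yet forced (so it is active), while $v_{k+1},\ldots,v_{k+2}$ each still have a white neighbor and are dormant. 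Thus $v_k$ may legally force any white vertex; choosing it to hop onto $v_{k+2}$ preserves the invariant for the next step. After $n-2$ steps every vertex is blue.

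The only subtle point, and the one I would highlight in the write-up, is that the choice of which white vertex a given active vertex hops to really does matter under $\H$: if, for example, $v_1$ chose to hop to $v_n$ rather than $v_3$ in a long path, the process would immediately stall because no remaining blue vertex would have a fully blue neighborhood. So the argument is not just ``exhibit a set of size two'' but ``exhibit a set of size two together with a specific chronological list of forces.'' Once that list is given, combining $\H(P_n)\le 2$ with the lower bound above yields $\H(P_n)=2$.
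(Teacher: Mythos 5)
Your proposal is correct and takes essentially the same route as the paper, which likewise combines the $\delta(G)+1$ lower bound from Observation~\ref{obs:Hdelta} with an explicit size-two hopping forcing set (the one shown in Figure~\ref{fig:easyHvalues}), your write-up merely making the chronological list $v_1\force v_3,\ v_2\force v_4,\ \ldots,\ v_{n-2}\force v_n$ explicit. (One cosmetic slip: after the $k$th force the vertex $v_k$ is extinct and $v_{k+1}$ is the active vertex, so your stated invariant is off by one index, but the list of forces itself is valid.)
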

\begin{obs}\label{obs:Hcycle}
For all $n\geq3$, $\H(C_n)=3$.
\end{obs}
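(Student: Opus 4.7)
My plan is to prove the two inequalities $\H(C_n) \geq 3$ and $\H(C_n) \leq 3$ separately.

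The lower bound is essentially free: since $\delta(C_n) = 2$, Observation \ref{obs:Hdelta} immediately gives $\H(C_n) \geq \delta(C_n) + 1 = 3$.

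For the upper bound I would exhibit a hopping forcing set of size $3$. The idea is to take three consecutive vertices on the cycle as $B$ and then propagate by hopping ``one step past the current blue interval'' around the cycle. Concretely, label $V(C_n) = \{v_1, v_2, \ldots, v_n\}$ in cyclic order (indices mod $n$) and set $B = \{v_1, v_2, v_3\}$. For $n = 3$ there is nothing to do since $B = V(C_3)$. For $n \geq 4$ I would perform, in order, the chronological list of forces
\[
v_2 \overset{\H}{\to} v_4, \quad v_3 \overset{\H}{\to} v_5, \quad v_4 \overset{\H}{\to} v_6, \quad \ldots, \quad v_{n-2} \overset{\H}{\to} v_n.
\]
To see each hop is legal I would argue by induction that immediately before $v_k \overset{\H}{\to} v_{k+2}$ fires (for $2 \leq k \leq n-2$), the blue set is exactly $\{v_1, v_2, \ldots, v_{k+1}\}$: the base cases $k \in \{2,3\}$ hold because $B$ is initially blue, and the inductive step follows because the previous hop colored $v_{k+1}$. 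Since $N_{C_n}(v_k) = \{v_{k-1}, v_{k+1}\}$ is then entirely blue and $v_k$ has not yet forced, $v_k$ is active and may hop to the white vertex $v_{k+2}$. After the final hop $v_{n-2} \overset{\H}{\to} v_n$, all of $V(C_n)$ is blue.

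The only real checking is the simultaneous bookkeeping that (a) each $v_k$ used as a forcer is still active (equivalently, each vertex is used as a forcer at most once, which holds since the forcers $v_2, v_3, \ldots, v_{n-2}$ are pairwise distinct), and (b) both cyclic neighbors of the forcer are blue at the moment it forces. Both are handled by the single induction above, so there is no serious obstacle.
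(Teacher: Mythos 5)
Your proposal is correct and matches the paper's approach: the paper also combines the lower bound $\H(C_n)\geq\delta(C_n)+1=3$ from Observation \ref{obs:Hdelta} with an explicit size-$3$ hopping forcing set of three consecutive vertices (shown in Figure \ref{fig:easyHvalues}) that hops around the cycle exactly as in your chronological list. Your inductive bookkeeping simply writes out in detail what the paper leaves to the figure.
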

\begin{obs}\label{obs:Hwheel}
For all $n\geq4$, $\H(W_n)=4$.
\end{obs}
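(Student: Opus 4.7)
The plan is to prove $\H(W_n) = 4$ by matching bounds. The lower bound is immediate: writing $W_n = K_1 \vee C_{n-1}$, every cycle vertex has degree $3$ (its two cycle neighbors and the hub), so $\delta(W_n) = 3$ and Observation \ref{obs:Hdelta} gives $\H(W_n) \geq \delta(W_n) + 1 = 4$. The base case $n = 4$ is handled separately since $W_4 \iso K_4$, so Proposition \ref{prop:hkn} already yields $\H(W_4) = 4$.

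For the upper bound when $n \geq 5$, I would exhibit an explicit hopping forcing set of size $4$. Labeling the hub by $h$ and the cycle vertices cyclically as $v_1, \ldots, v_{n-1}$, take $B = \{h, v_1, v_2, v_3\}$ and describe a chronological list of forces in which the blue cycle vertices grow as a single arc that alternately expands at its two ends. Initially only $v_2$ is active (its neighbors $v_1, v_3, h$ are all blue), so $v_2$ hops to $v_{n-1}$; this makes $v_1$ active, and $v_1$ hops to $v_4$; then $v_3$ becomes active and hops to $v_{n-2}$; then $v_{n-1}$ hops to $v_5$; and so on, at each step the active vertex being the interior vertex just inside the most recently extended endpoint of the arc, and the hop target being the next still-white cycle vertex on the opposite side.

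The main thing to verify is that this process always has a valid active vertex at each step and terminates with $V(W_n)$ entirely blue. I would handle this by induction on the number of forces performed, maintaining the invariant that the blue cycle vertices form a contiguous arc, both arc endpoints are dormant, and the interior vertex immediately next to the endpoint most recently extended is the unique active blue cycle vertex (its two cycle neighbors and $h$ are all blue). Because the hopping rule permits hopping to \emph{any} white vertex, not just a neighbor, the arc can always be extended on the chosen side regardless of the parity of $n-1$, and when only one white cycle vertex remains the current active vertex hops to it. This shows $B$ is a hopping forcing set of $W_n$ and hence $\H(W_n) \leq 4$, completing the proof.
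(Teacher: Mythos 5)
Your proposal is correct and matches the paper's approach: the paper also combines the minimum-degree lower bound from Observation \ref{obs:Hdelta} with an explicit size-$4$ hopping forcing set for $W_n$ (shown in Figure \ref{fig:easyHvalues}), and your arc-extension argument simply spells out in detail why such a set forces; your separate treatment of $W_4\iso K_4$ is consistent with Proposition \ref{prop:hkn}.
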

\begin{obs}\label{obs:Hstar}
For all $n\geq2$, $\H(K_{1,n-1})=2$.
\end{obs}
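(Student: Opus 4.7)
The plan is to prove both directions of the equality separately. For the lower bound, I will invoke Observation \ref{obs:Hdelta}: since every star $K_{1,n-1}$ with $n \geq 3$ has minimum degree $\delta(K_{1,n-1})=1$ (realized by any leaf), Observation \ref{obs:Hdelta} gives $\H(K_{1,n-1}) \geq \delta(K_{1,n-1})+1 = 2$. The edge case $n=2$ is immediate since $K_{1,1}=P_2$, for which Observation \ref{obs:Hpath} already gives the value $2$; alternatively $\H(G)\geq 1$ and equality would force $G\cong \ol{K_n}$ by the characterization stated in the paragraph preceding Proposition \ref{prop:hkn}.

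For the upper bound I would exhibit an explicit hopping forcing set of size $2$. Let $c$ denote the center of $K_{1,n-1}$ and let $\ell_1,\ldots,\ell_{n-1}$ be the leaves; I claim $B=\{c,\ell_1\}$ is a hopping forcing set. The key observation is that every leaf's only neighbor is $c$, which is blue throughout the process. Hence at the start, $\ell_1$ is active (all its neighbors are blue and it has not yet forced) and may hop to any non-adjacent white vertex; in particular, $\ell_1 \overset{\H}{\to} \ell_2$ is a valid force since $\ell_2$ is not adjacent to $\ell_1$ in the star. Now $\ell_1$ is extinct but $\ell_2$ is active for the same reason, so $\ell_2 \overset{\H}{\to} \ell_3$, and so on. Iterating, the chain $\ell_1 \to \ell_2 \to \cdots \to \ell_{n-1}$ colors all leaves blue, so $V(K_{1,n-1})$ becomes blue and $\H(K_{1,n-1}) \leq 2$.

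There is essentially no hard step here; the only thing to be careful about is distinguishing the roles of the two starting vertices. Choosing two leaves would not work, because each leaf's sole neighbor would be the white center, leaving both initial vertices dormant and preventing any hop from ever occurring. Including the center $c$ in $B$ is what keeps the leaves active. Once this is noted, the inductive chain of forces completes the argument and matches the lower bound, giving $\H(K_{1,n-1})=2$.
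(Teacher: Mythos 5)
Your proof is correct and follows essentially the same route as the paper: the lower bound comes from Observation \ref{obs:Hdelta} via $\delta(K_{1,n-1})+1=2$, and the upper bound from the explicit size-$2$ hopping forcing set consisting of the center and one leaf, which is exactly the set the paper exhibits in Figure \ref{fig:easyHvalues} (and reuses in Proposition \ref{prop:forcingKst} with $s=1$). Your separate treatment of $n=2$ is harmless but unnecessary, since $\delta(K_{1,1})=1$ makes the same lower-bound argument apply.
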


\begin{figure}[ht] \begin{center}
\scalebox{.3}{\includegraphics{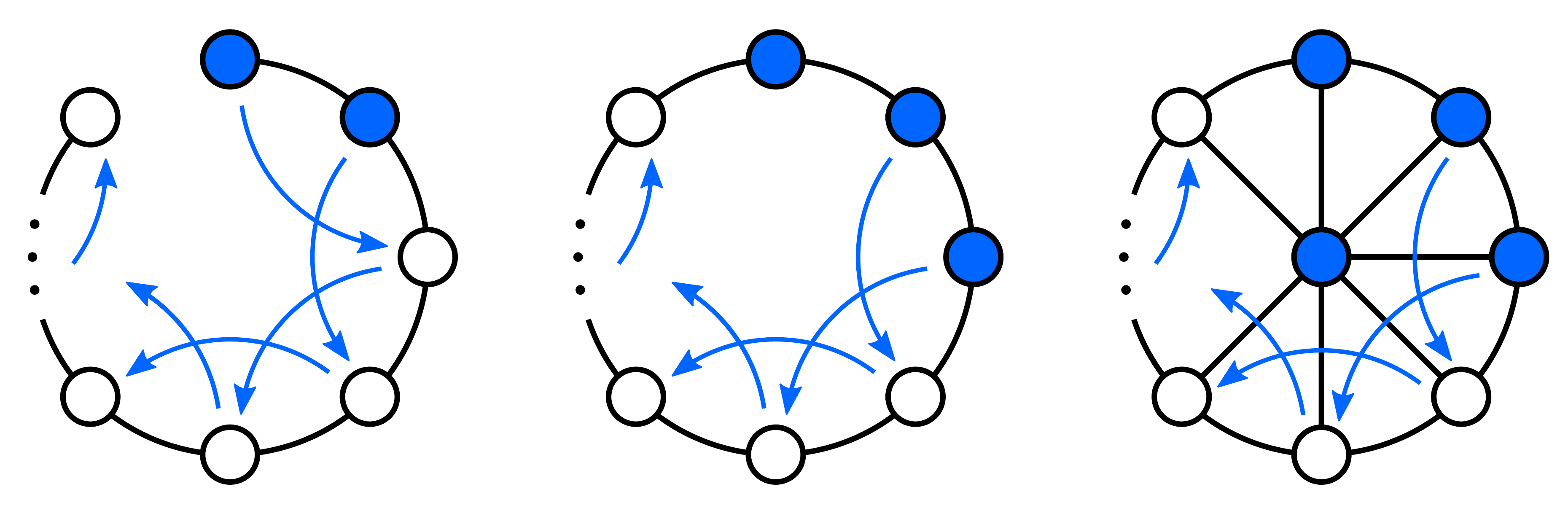}}\\
\caption{Minimum hopping forcing sets for $P_n$, $C_n$, $W_n$, and $K_{1,n-1}$.}
\label{fig:easyHvalues} 
\end{center}
\end{figure}

In fact, since $\Z(K_{1,n-1})=n-2$ (see \cite[Example 3.10]{C19}), Observation \ref{obs:Hstar} implies that the difference between $\Z(G)$ and $\H(G)$ can be made arbitrarily large. The remaining three observations above have $\H(G)=\Z(G)+1$. This is not a coincidence; the following proposition demonstrates how zero forcing sets can be turned into hopping forcing sets with the addition of one vertex, and further how hopping forcing sets double as $\Zf$ forcing sets.

\begin{prop}\label{prop:hineq}
For any graph $G$, $\Zf(G)\leq \H(G)\leq \Z(G)+1$.
\end{prop}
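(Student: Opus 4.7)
The plan is to prove the two inequalities separately. For the lower bound $\Zf(G) \le \H(G)$, the argument is immediate from Theorem~\ref{thm:ccrzf}: since the rule $\CCRZf$ permits applying either the standard rule or the hopping rule at each step, every hop is a legal $\CCRZf$ move, and so any $\H$-forcing set of $G$ is automatically a $\CCRZf$-forcing set. Hence $\CCRZf(G) \le \H(G)$, and Theorem~\ref{thm:ccrzf} then yields $\Zf(G) \le \H(G)$.

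For the upper bound $\H(G) \le \Z(G)+1$, my plan is to convert a minimum zero forcing set into a hopping forcing set by adding a single extra vertex. First I would fix a minimum $\Z$-forcing set $B$ and a chronological list of standard forces $v_1 \overset{\Z}{\rightarrow} w_1,\ v_2 \overset{\Z}{\rightarrow} w_2,\ \ldots,\ v_k \overset{\Z}{\rightarrow} w_k$ (if $k = 0$ then $B = V(G)$ and the claim is trivial, so I assume $k \ge 1$). Setting $B' = B \cup \{w_1\}$, I note that $w_1 \notin B$ because $w_1$ is a forced vertex, so $|B'| = \Z(G) + 1$. I would then show that $B'$ is a hopping forcing set by exhibiting the chronological list of hops
\[
  v_1 \overset{\H}{\rightarrow} w_2,\quad v_2 \overset{\H}{\rightarrow} w_3,\quad \ldots,\quad v_{k-1} \overset{\H}{\rightarrow} w_k,
\]
after which the blue set in the hopping process becomes $B \cup \{w_1, w_2, \ldots, w_k\} = V(G)$.

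The hard part will be verifying that this list of hops is valid step-by-step. I would proceed by induction on $i$ to establish that just before the $i$th hop the blue set in the hopping process equals $B \cup \{w_1, \ldots, w_i\}$, exactly the blue set reached after the first $i$ standard forces. Three conditions must then be checked for the hop $v_i \overset{\H}{\rightarrow} w_{i+1}$: (i) $v_i$ is blue, because $v_i$ either lies in $B$ or equals some earlier $w_j$ with $j < i$; (ii) $v_i$ has not hopped before, since the $v_i$'s in any standard chronological list are distinct (once $v_i$ forces $w_i$, it has no white neighbors left); and (iii) every neighbor of $v_i$ is blue, which is the crux of the argument: $v_i$'s only white neighbor just before the $i$th standard force was $w_i$, and in the hopping process $w_i$ has already been colored blue either by the previous hop $v_{i-1} \overset{\H}{\rightarrow} w_i$ or, when $i=1$, by the initial inclusion of $w_1$ in $B'$. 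Finally, the target $w_{i+1}$ is white at this point because $w_{i+1} \notin B$ and is distinct from each of $w_1, \ldots, w_i$. This delicate bookkeeping of blue sets, and in particular the observation that pre-adding $w_1$ to $B$ frees up the chain of dependencies needed for successive hops, is where I expect most of the care to go.
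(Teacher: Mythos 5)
Your proposal is correct and follows essentially the same route as the paper: the lower bound comes from every hopping forcing set being a $\Zf$ (i.e.\ $\CCRZf$) forcing set, and the upper bound from adding the first forced vertex $w_1$ to a minimum zero forcing set and shifting the standard chronological list by one so that each $v_i$ hops to $w_{i+1}$. Your extra bookkeeping (tracking that the blue set before the $i$th hop matches the standard process, and noting the forcers are distinct) just makes explicit what the paper's proof argues more briefly.
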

\begin{proof}
Since $\Zf$ forcing lets us use the hopping color change rule in addition to the standard color change rule, every hopping forcing set is also a $\Zf$ forcing set. Thus, $\Zf(G)\leq \H(G)$.

Let $B$ be a minimum zero forcing set of $G$, and let
\[
\mathcal{L}=( u_1\rightarrow v_1,\ u_2\rightarrow v_2,\ \ldots,\ u_n \rightarrow v_n )
\]
be a chronological list of standard zero forces of $B$. Define $B'=B\cup \{v_1 \}$; that is, $B'$ is the zero forcing set $B$ along with $v_1$, the first vertex that gets forced in $G$.

Starting with the vertices in $B'$ colored blue, we can force the entire graph blue by hopping as follows. To begin, $u_1$ forces $v_2$; then, $u_2$ forces $v_3$. In general, $u_i$ forces $v_{i+1}$ for each $1\leq i\leq n-1$.

Note that each of these forces follows the hopping color change rule. First, observe that in $\mathcal{L}$, $u_1$ forces $v_1$; thus, since $\mc{L}$ is a list of standard zero forces, $v_1$ is the only white neighbor of $u_1$ at time $0$. If we color $v_1$, then $u_1$ is surrounded by blue vertices and can force $v_2$ by a hop. To generalize, consider three consecutive forces 
\[
u_{i-1}\rightarrow v_{i-1},\ u_{i}\rightarrow v_{i},\ \text{and} \  u_{i+1}\rightarrow v_{i+1} 
\]
in $\mathcal{L}$ for some integer $2\leq i\leq n-1$. Since $u_i$ forces $v_i$ in $\mc{L}$, $v_i$ must be the only white neighbor of $u_i$ at the time immediately before the force occurs. Then, by having $u_{i-1}$ force $v_i$ by a hop, $u_i$ will be surrounded by blue vertices and can force $v_{i+1}$ by a hop.

This gives us a chronological list of hopping forces of $B'$, namely
\[
\mathcal{L}' = (u_1 \rightarrow v_2,\ u_2 \rightarrow v_3,\ \ldots, \  u_{n-1}\rightarrow v_n ).
\]
So, $B'$ is a hopping forcing set of $G$. Since $|B|=\Z(G)$, we have $|B'|=\Z(G)+1$. This implies $\H(G)\leq \Z(G)+1$, as desired.
\end{proof}

\begin{cor}\label{cor:hdelta}
If $\Z(G)=\delta(G)$, then $\H(G)=\delta(G)+1$.
\end{cor}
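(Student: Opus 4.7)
The plan is to derive this as a direct sandwich of the two bounds established earlier. From Observation \ref{obs:Hdelta}, we always have $\H(G) \geq \delta(G) + 1$, giving the lower bound for free. From Proposition \ref{prop:hineq}, we have $\H(G) \leq \Z(G) + 1$, and the hypothesis $\Z(G) = \delta(G)$ immediately upgrades this to $\H(G) \leq \delta(G) + 1$, which matches the lower bound.

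So the whole proof is one line of chained inequalities: $\delta(G) + 1 \leq \H(G) \leq \Z(G) + 1 = \delta(G) + 1$. There is no real obstacle here; the work has already been done in Proposition \ref{prop:hineq} (where the nontrivial construction of converting a zero forcing set into a hopping forcing set by adding the first forced vertex lives) and in Observation \ref{obs:Hdelta} (whose lower bound comes from needing a blue vertex with all neighbors blue to start any hop). The corollary just records the consequence when these two bounds coincide.
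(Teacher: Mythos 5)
Your proposal is correct and matches the paper's proof exactly: both sandwich $\H(G)$ between the lower bound $\delta(G)+1$ from Observation \ref{obs:Hdelta} and the upper bound $\Z(G)+1$ from Proposition \ref{prop:hineq}, then substitute the hypothesis $\Z(G)=\delta(G)$. Nothing is missing.
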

\begin{proof}
By Observation \ref{obs:Hdelta}, we have $\H(G)\geq\delta(G)+1$, and by Proposition \ref{prop:hineq}, we have $\H(G)\leq\Z(G)+1=\delta(G)+1$.
\end{proof}

It is easy to see that $\Z(P_n)=1$, $\Z(C_n)=2$, and $\Z(W_n)=3$; as such, Corollary \ref{cor:hdelta} serves as another way to determine the $\H$ values in Observations $\ref{obs:Hpath}$, $\ref{obs:Hcycle}$, and $\ref{obs:Hwheel}$. However, note that Corollary \ref{cor:hdelta} does not hold in the converse. To show this, consider the star graph $K_{1,n-1}$. Observation \ref{obs:Hstar} gives us that $\H(K_{1,n-1}) = 2 = \delta(K_{1,n-1})+1$, but as before, we have that $\Z(K_{1,n-1})=n-2\neq \delta(K_{1,n-1})$.

We conclude this section with a few more examples of the hopping forcing number for some famous graphs. First, we generalize Observation \ref{obs:Hstar} to consider all complete bipartite graphs.

\begin{prop}\label{prop:forcingKst}
Suppose $G$ is the complete bipartite graph $K_{s,t}$ on $s+t$ vertices with partite sets $U$ and $V$ such that $s=|U|\leq |V|=t$ with $s\geq 1$ and $t\geq 2$. Then $\Z(G)=s+t-2$ and $\H(G)=s+1$.
\end{prop}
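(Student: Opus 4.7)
The plan is to establish the two equalities separately. For $\Z(K_{s,t})$ I will give an explicit forcing set of the claimed size for the upper bound and then argue by a brief case analysis that any smaller set fails. For $\H(K_{s,t})$ the lower bound is immediate from Observation \ref{obs:Hdelta} since $\delta(K_{s,t}) = s$, so the real content is a single explicit hopping forcing set of size $s+1$.

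For the zero forcing claim, I would take $B = (U \setminus \{u\}) \cup (V \setminus \{v\})$ for any $u \in U$, $v \in V$, giving $|B| = s+t-2$. Any $u' \in U \setminus \{u\}$ then has $v$ as its unique white neighbor and forces it; afterwards any vertex in $V$ has $u$ as its unique white neighbor and forces it, showing $\Z(K_{s,t}) \leq s+t-2$. For the lower bound I would suppose some $B$ with $|B| = s+t-3$, so there are three white vertices distributed as $(a,b)$ whites in $(U, V)$ with $a+b = 3$. If $a \in \{0,3\}$ no standard force is ever available (every blue vertex in the smaller-white side has either $0$ or $\geq 2$ white neighbors). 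If $(a,b) = (1,2)$ the only force that can occur is a blue $v \in V$ forcing the unique white in $U$, after which every remaining blue vertex again has $0$ or $2$ white neighbors; the case $(2,1)$ is symmetric. Thus no such $B$ is a zero forcing set, giving $\Z(K_{s,t}) \geq s+t-2$.

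For the hopping claim, $\H(K_{s,t}) \geq \delta(K_{s,t}) + 1 = s+1$ by Observation \ref{obs:Hdelta}. For the matching upper bound I would fix any $v_1 \in V$ and take $B' = U \cup \{v_1\}$, so $|B'| = s+1$. Initially $N(v_1) = U$ is entirely blue, so $v_1$ is active and may hop to some white $v_2 \in V$ (the hopping rule allows forcing any white vertex, not only a neighbor). After this step $v_1$ is extinct, but the newly blue $v_2$ also satisfies $N(v_2) = U$ all blue, so $v_2$ is active and hops to a white $v_3$. Iterating, the most recently forced vertex in $V$ is always surrounded by the blue set $U$ and can hop to the next uncolored vertex of $V$, until $V$ is exhausted. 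This produces a chronological list of hopping forces ending with $V(K_{s,t})$ blue, so $\H(K_{s,t}) \leq s+1$.

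The subtlety I expect to spend the most care on is the lower bound for $\Z$: one must verify that in the $(1,2)$ and $(2,1)$ cases the single available force really does leave every subsequent blue vertex with either zero or at least two white neighbors. Everything else is a routine verification that the proposed initial sets behave as described, together with the observation that vertices of $U$ remain dormant throughout the hopping process (since until the very last step their neighborhood $V$ still contains a white vertex), so the entire force sequence is carried by vertices of $V$.
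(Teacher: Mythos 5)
Your argument is essentially the paper's own: the zero forcing upper bound uses the same set $V(G)\setminus\{u,v\}$, and the hopping part (lower bound from Observation \ref{obs:Hdelta} via $\delta(K_{s,t})=s$, upper bound via $U\cup\{v_1\}$ with the most recently forced vertex of $V$ hopping to the next one) is identical to the paper's proof. One small caveat on the $\Z$ upper bound: when $s=1$ the order you describe cannot begin, since $U\setminus\{u\}$ is empty; there the blue vertices of $V$ must first force $u$, which then forces $v$ in a second step. The set still works, but not by the one-round schedule you wrote (the paper's phrasing has the same hiccup).

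The one genuine gap is in the lower bound for $\Z$. Your case analysis of distributions $(a,b)$ with $a+b=3$ is correct as far as it goes, but it only rules out forcing sets of size exactly $s+t-3$; it says nothing about hypothetical forcing sets of size $s+t-4$ or smaller, so it does not yet yield $\Z(K_{s,t})\geq s+t-2$. You can close this either by invoking the standard monotonicity fact that any superset of a zero forcing set is again a zero forcing set (so a smaller forcing set could be padded up to size exactly $s+t-3$, contradicting your cases), or more directly, as the paper does: if $|B'|<s+t-2$, the pigeonhole principle places at least two white vertices in one partite set, say $U$; as long as both remain white, every vertex of $V$ has at least two white neighbors and so can never force either of them, while no vertex of $U$ is adjacent to them at all --- hence neither can ever be the first of the two to be forced, and they stay white forever. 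That argument needs no split on the exact distribution of white vertices and handles every size below $s+t-2$ at once.
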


\begin{proof}
Pick $u\in U$ and $v\in V$ and let $B=V(G)\setminus \{u,v \}$. Since each blue vertex in $U$ is adjacent to only one white vertex (namely $v$) and each blue vertex in $V$ is adjacent to only one white vertex (namely $u$), $u$ and $v$ can be forced in the first time step with the standard color change rule. This means $B$ is a standard zero forcing set of $G$ of size $|U|-1+|V|-1=s+t-2$, so $\Z(G)\leq s+t-2$.

Suppose that there exists a zero forcing set $B'$ of $G$ with size $|B'|<s+t-2$. Then, by the pigeonhole principle, there must be at least two white vertices in one of the partite sets of $G$; without loss of generality, suppose that partite set is $U$. At time $0$, each vertex in $V$ is then adjacent to at least two white vertices, and thus is dormant. Since there are no edges between vertices in $U$, no blue vertex in $U$ can force the white vertices of $U$ to become blue. So, $B'$ cannot be a zero forcing set. This implies that $\Z(G)=s+t-2$.

By Observation $\ref{obs:Hdelta}$, $\H(G)\geq s+1$. Let $B=U\cup \{v\}$. Then, $v$ is adjacent to only blue vertices (those in $U$), and as such is active and can force another vertex in $V$ to become blue. That vertex is now also active, and can force; this process continues until all vertices in $V$, and thus in $G$, are blue. As such, $B$ is a hopping forcing set, implying $\H(G)\leq s+1$ and $\H(G) = s+1$.
\end{proof}




We now find the hopping forcing number of the Petersen graph, illustrated in Figure \ref{fig:petersengraph}. Recall that a graph $G$ is \emph{strongly regular}, or $\srg(n, k, \lambda, \mu)$, if $|V(G)| = n$, $G$ is $k$-regular, every pair of adjacent vertices in $G$ has $\lambda$ common neighbors, and every pair of non-adjacent vertices in $G$ has $\mu$ common neighbors.

\begin{figure}[ht]
\begin{center}
\scalebox{.4}{\includegraphics{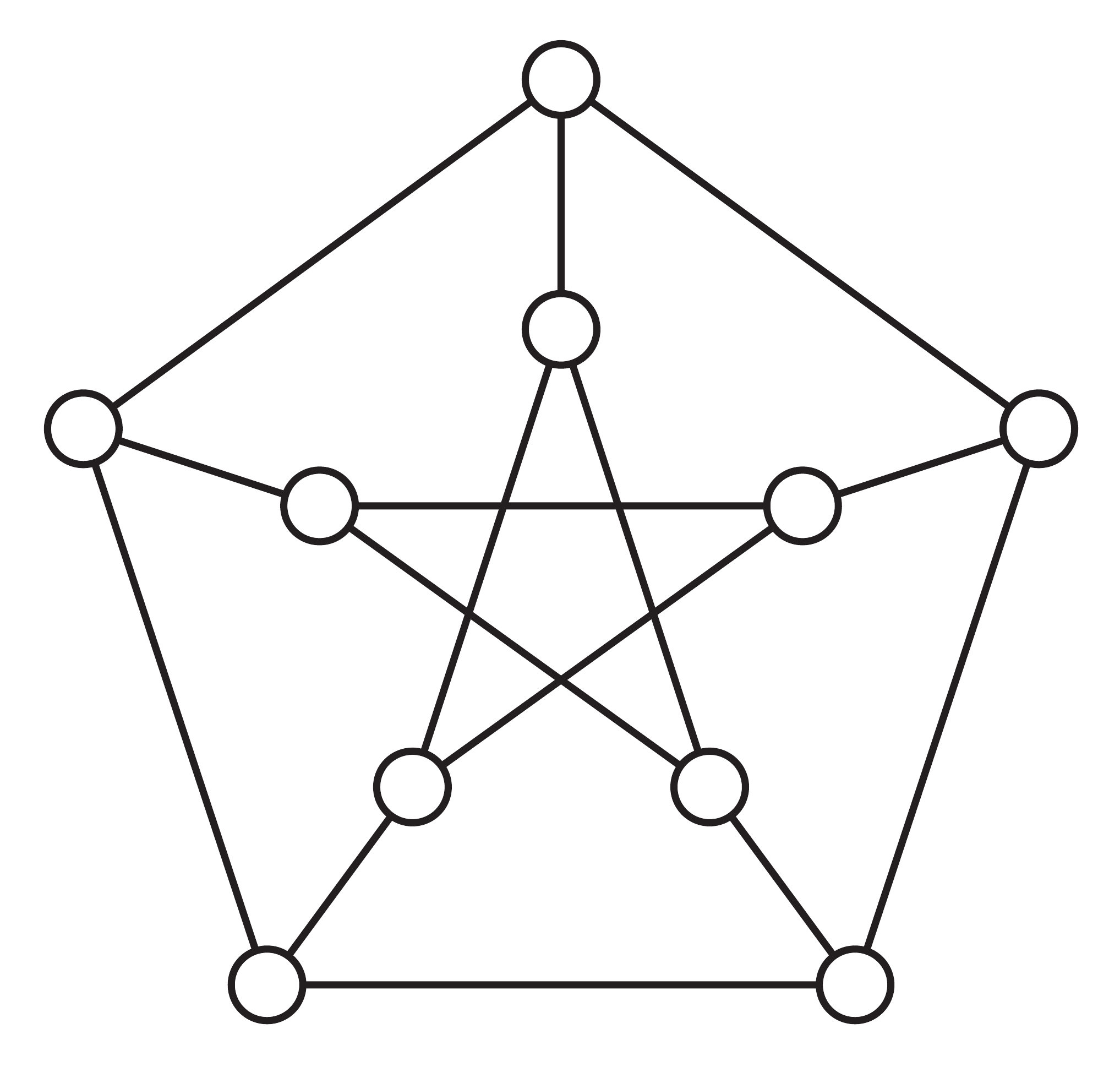}}\\
\caption{The Petersen graph.}
\label{fig:petersengraph} 
\end{center}
\end{figure}

\begin{prop}
If $P$ is the Petersen graph, then $\H(P)=6$.
\end{prop}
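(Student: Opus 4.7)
The proof breaks into matching upper and lower bounds.

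For the upper bound, my plan is to invoke Proposition~\ref{prop:hineq} together with the known value $\Z(P) = 5$, which immediately gives $\H(P) \le \Z(P)+1 = 6$. If a self-contained argument is preferred, one can instead exhibit an explicit hopping forcing set of size 6 --- for example, $\{v\} \cup N(v)$ together with two well-chosen vertices at distance 2 from $v$ --- and verify by a short chronological list of hops that all ten vertices eventually become blue.

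For the lower bound $\H(P) \ge 6$, I plan to show that no five-vertex set $B$ is a hopping forcing set. Since any hop requires a blue vertex $v$ with $N(v)$ entirely blue, every hopping forcing set must contain $\{v\} \cup N(v)$ for some $v$. Because $P$ is triangle-free (i.e., $\lambda = 0$ in its $\srg(10,3,0,1)$ parameters), $\{v\} \cup N(v)$ is a claw on four vertices, so $B = \{v\} \cup N(v) \cup \{x\}$ for some $x \in V(P) \setminus (\{v\} \cup N(v))$. Using vertex-transitivity of $P$ and the fact that the stabilizer of $v$ in $\mathrm{Aut}(P) \cong S_5$ has order $12$ and acts transitively on the six vertices at distance two from $v$, both $v$ and $x$ may be fixed up to automorphism, reducing the problem to a single initial configuration.

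From this configuration, every chronological list of hops must begin with $v$ hopping, and $v$ can hop to any of the five white vertices. The key structural facts --- that $V(P) \setminus (\{v\} \cup N(v))$ induces a $6$-cycle, with each neighbor of $v$ adjacent to an antipodal pair on this cycle (a consequence of $\mu = 1$ and girth $5$) --- combined with triangle-freeness, imply that after each hop very few vertices can acquire a fully blue neighborhood. A routine case analysis then shows that in four of the five branches the process stalls immediately, while in the remaining branch exactly one new vertex becomes active, and each of its own possible hops likewise leads to a stalled configuration with white vertices remaining.

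The main obstacle is keeping the casework organized. The symmetry reduction (vertex-transitivity plus transitivity of the stabilizer on the distance-two set) and the rigid structure forced by girth $5$ and $\lambda=0$ collapse the initial configurations to a single case and make each subsequent branch short, so the full verification remains compact.
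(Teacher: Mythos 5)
Your proposal is correct and follows essentially the same route as the paper: the upper bound via $\H(P)\le\Z(P)+1$ with $\Z(P)=5$, and the lower bound by observing that any $5$-set must contain a closed neighborhood $N[v]$ plus one distance-two vertex, then using triangle-freeness and unique common neighbors ($\lambda=0$, $\mu=1$) to show only one first hop creates a new active vertex and every continuation stalls. Your symmetry reduction via $\mathrm{Aut}(P)$ and the explicit $6$-cycle/antipodal description are just a more formal packaging of the paper's ``without loss of generality'' and strong-regularity argument.
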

\begin{proof}
By \cite[Proposition 3.26]{AIM08}, $\Z(P)=5$. Thus, by Corollary \ref{cor:hdelta}, $\H(P)\leq 6$. Suppose there existed a hopping forcing set $B$ of $P$ with $|B|=5$. We need at least one active vertex in $B$ to be able to force at all, so $B$ must include some vertex $u$ along with its three neighbors $v_1$, $v_2$, and $v_3$. Since $P$ is $\srg(10,3,0,1)$, each pair of vertices in $\{v_1, v_2, v_3 \}$ is adjacent to only one common vertex, namely $u$, so they each must be adjacent to one blue vertex ($u$) and two white vertices. Then at best, we can color one of these vertices' white neighbors blue to complete $B$. Without loss of generality, color one white neighbor of $v_1$ blue, and call it $w$.

We can now begin forcing. The only active vertex at time $0$ is $u$. By the strong regularity of $P$, $w$ is adjacent to two white vertices, so the only blue vertex adjacent to exactly one white vertex is $v_1$. Let $x$ be the unique white neighbor of $v_1$. If $u$ does not force $x$ in the first time step, then we will have no active vertices at time $1$, so suppose $u$ forces $x$. This means that at time $1$, $v_1$ is the only active vertex; however, every dormant vertex is adjacent to two white vertices by the strong regularity of $P$. As such, there is no way for $v_1$ to force so that a dormant vertex becomes active, implying $B$ is not a hopping forcing set. This implies $\H(P)=6$.
\end{proof}


\section{Throttling for hopping forcing}\label{sec:throttling}

In this section, we turn to exploring throttling for hopping forcing, optimizing the trade-off between the size of a hopping forcing set and its propagation time. Specifically, we find several useful bounds on the hopping throttling number, examine its extreme values, and compare it to other types of throttling numbers.

\subsection{Lower bounds}\label{sec:throtlowerbounds}

In \cite{BY13}, Butler and Young give a lower bound for the standard throttling number, namely that $\thz(G)\geq \lceil 2\sqrt{n}-1 \rceil$ for any graph $G$ of order $n$. This bound relies on the fact that, under the standard color change rule, a blue vertex can force at most one other vertex to become blue in each time step. The same is true for hopping forcing---an active blue vertex cannot force more than one other vertex---so $\lceil 2\sqrt{n}-1 \rceil$ also bounds $\thh(G)$ from below.

However, we can improve on this bound by considering one key difference between hopping forcing and standard zero forcing. Under the standard color change rule, blue vertices adjacent to white vertices may or may not be able to force. However, under the hopping color change rule, we know that these vertices cannot force under any circumstances. We can then count the number of dormant vertices and, by extension, the number of active vertices at each time in a hopping forcing process by considering the minimum number of blue vertices adjacent to white vertices. This count is critically linked to the vertex connectivity of $G$, as demonstrated in the following theorem.

\begin{thm}\label{thm:thhkappa}
If $G$ is a graph on $n$ vertices with vertex connectivity $\kappa$, then
\[
\thh(G)\geq \lceil 2\sqrt{n-\kappa}+\kappa-1 \rceil.
\]
\end{thm}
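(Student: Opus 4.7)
The plan is to run a double-counting argument on the forces performed at each time step, using vertex connectivity to bound the supply of dormant vertices. Let $B$ be a hopping forcing set of $G$ realizing the hopping throttling number, so $|B| = k$, $p = \pth(G;B)$, and $\thh(G) = k + p$. Fix a set of hopping forces $\calf$ of $B$ that achieves propagation time $p$. For each integer $t \geq 0$, let $a_t$, $d_t$, and $e_t$ denote the numbers of active, dormant, and extinct blue vertices at time $t$. Since the total number of blue vertices at time $t$ equals $k$ (the initial set) plus the number of completed forces (which equals $e_t$), and also equals $a_t + d_t + e_t$, we obtain the key identity $a_t = k - d_t$.

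The central step is to show that $d_t \geq \kappa$ for every $t \in \{0,1,\ldots,p-1\}$. For such $t$, the set of white vertices is nonempty (otherwise propagation would have finished by time $t$), and at least one force must occur at step $t+1$, so there is at least one active vertex at time $t$. Both active and extinct vertices have no white neighbors, so together they form an ``interior'' portion of the blue region. Any path in $G$ from an interior blue vertex to a white vertex must pass through some blue vertex adjacent to a white vertex, i.e., through a dormant vertex. Consequently, the set of dormant vertices is a vertex cut of $G$, giving $d_t \geq \kappa$ and hence $a_t \leq k - \kappa$.

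Since each force at step $t$ consumes a distinct active vertex from time $t-1$, at most $a_{t-1} \leq k - \kappa$ forces occur at step $t$. Summing over $t = 1, \ldots, p$ and using that exactly $n - k$ forces are performed in total gives $n - k \leq p(k - \kappa)$. Observation \ref{obs:Hdelta} together with $\kappa \leq \delta(G)$ forces $k \geq \kappa + 1$, so $k - \kappa \geq 1$ and $p \geq (n - k)/(k - \kappa)$. Setting $m = k - \kappa$ and applying AM-GM to
\[
\thh(G) \;=\; k + p \;\geq\; (m + \kappa) + \frac{n - \kappa - m}{m} \;=\; m + \frac{n - \kappa}{m} + \kappa - 1 \;\geq\; 2\sqrt{n - \kappa} + \kappa - 1
\]
and then taking the ceiling (since $\thh(G)$ is an integer) completes the argument.

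The main obstacle will be the vertex-cut step: I need to carefully justify that an interior blue vertex exists at every intermediate time $t < p$, since without one the dormant set separates white from nothing and the inequality $d_t \geq \kappa$ could fail. This should follow from the fact that any set of forces realizing propagation time $p$ must execute at least one force at each of the steps $1, \ldots, p$, and each such force requires an active (hence interior) vertex immediately beforehand.
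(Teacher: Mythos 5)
Your proposal is correct and follows essentially the same argument as the paper: the dormant vertices at each intermediate time form a vertex cut (so $d_t\geq\kappa$ and at most $|B|-\kappa$ forces occur per step), giving $n\leq |B|+\pth(G;B)\,(|B|-\kappa)$. The only differences are cosmetic---you finish with AM--GM after substituting $m=k-\kappa$ (and explicitly justify $k-\kappa\geq 1$ and the existence of an active vertex at each intermediate time), whereas the paper minimizes $f(p)=\frac{n+\kappa p}{1+p}+p$ by calculus.
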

\begin{proof}
Let $B$ be a hopping forcing set of $G$. Recall that a blue vertex $v$ in $V(G)$ must be either dormant, active, or extinct. In any case, during each time step, each blue vertex in $V(G)$ can force at most one white vertex to become blue, but never more.

If we start with $|B|$ blue vertices in $G$ and those vertices together perform $k$ forces during the first time step, then the number of blue vertices that have not yet forced (that is, the number of dormant or active vertices) becomes $|B|-k+k=|B|$, since active vertices can only force one white vertex to become blue. In general, at each time $t<\pth(G;B)$, the number of blue vertices that have not yet forced is equal to $|B|$.


For some $t<\pth(G;B)$, let $B'$ be the set of blue vertices that have not yet performed a force (all dormant or active vertices) at time $t$. Then, $|B'|=|B|$ by the logic above. Note that since $t<\pth(G;B)$, $V(G)$ is not all blue at time $t$. So, if we remove the dormant vertices at time $t$ from the graph, then no blue vertex is adjacent to a white vertex. This can only occur when the graph is disconnected, as otherwise there would be a dormant vertex we had not removed. By definition of vertex connectivity, the number of dormant vertices at time $t$ must be at least $\kappa$. Since $B'$ consists only of dormant and active vertices, and a blue vertex cannot be both dormant and active, $B'$ can be partitioned into dormant vertices and active vertices. So, there are at most $|B'|-\kappa = |B|-\kappa$ active vertices in $G$ at time $t$, which can force at most $|B|-\kappa$ white vertices to become blue in the succeeding time step $t+1$.



Thus, we start with $|B|$ blue vertices and in each time step, at most $|B|-\kappa$ white vertices become blue. After $\pth(G;B)$ time steps, all $n$ vertices of $G$ must be blue, so
\begin{eqnarray*}
|B| + \underbrace{(|B|-\kappa) + \ldots + (|B|-\kappa)}_{\pth(G;B) \text{ times}} \geq n &\iff& |B|+\pth(G;B)(|B|-\kappa) \geq n \\
&\iff & |B|+|B|\pth(G;B)-\kappa\pth(G;B) \geq n \\
&\iff & |B|\left(1+\pth(G;B) \right)-\kappa\pth(G;B) \geq n \\
&\iff & |B| \geq \frac{n+\kappa\pth(G;B)}{1+\pth(G;B)}.
\end{eqnarray*}
Let $b=|B|$ and $p=\pth(G;B)$. Then, to find $\thh(G)$, we want to minimize $b+p$ subject to $b\geq\frac{n+\kappa p}{1+p}$. The constraint gives $b+p \geq \frac{n+\kappa p}{1+p}+p$. Let $f(p)=\frac{n+\kappa p}{1+p}+p$. Then
\[
f'(p)=\frac{\kappa(1+p)-(n+\kappa p)(1)}{(1+p)^2}+1=\frac{\kappa-n}{(1+p)^2}+1.
\]
This means $f'(p)=0$ when $p=\pm\sqrt{n-\kappa}-1$; note that $\sqrt{n-\kappa}\geq 1$ since $\kappa \leq n-1$ for any graph. Since $p\geq0$, we obtain $p=\sqrt{n-\kappa}-1$ as our critical point. For $p>-1$, $f$ is concave-up, so $f(\sqrt{n-\kappa}-1)$ is the minimum. Thus
\begin{align*}
f(\sqrt{n-\kappa}-1) &= \frac{n+\kappa(\sqrt{n-\kappa}-1)}{1+\sqrt{n-\kappa}-1} + \sqrt{n-\kappa}-1 \\
&= \frac{n-\kappa+\kappa\sqrt{n-\kappa}}{\sqrt{n-\kappa}} + \sqrt{n-\kappa}-1 \\
&= \sqrt{n-\kappa} + \kappa + \sqrt{n-\kappa} - 1 \\
&= 2\sqrt{n-\kappa} + \kappa - 1.
\end{align*}
Since $\thh(G)$ is an integer, we have $\thh(G) = b+p\geq \lceil 2\sqrt{n-\kappa}+\kappa-1 \rceil$ as desired.
\end{proof}

Setting $\kappa=0$, we recover the original bound from \cite{BY13}.

\begin{cor}\label{cor:thhall}
For any graph $G$ of order $n$, $\thh(G)\geq \lceil 2\sqrt{n}-1 \rceil$.
\end{cor}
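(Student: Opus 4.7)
The plan is to derive this directly from Theorem~\ref{thm:thhkappa} together with a short monotonicity check, since the author's remark ``Setting $\kappa=0$'' suggests that the general bound should specialize to the claimed one. Let $\kappa=\kappa(G)$ denote the vertex connectivity; by Theorem~\ref{thm:thhkappa} we already know $\thh(G)\geq \lceil 2\sqrt{n-\kappa}+\kappa-1\rceil$, so it suffices to prove the inequality
\[
2\sqrt{n-\kappa}+\kappa-1 \;\geq\; 2\sqrt{n}-1,
\]
equivalently $2\sqrt{n-\kappa}+\kappa\geq 2\sqrt{n}$, for every admissible value of $\kappa$, namely $0\leq \kappa\leq n-1$.

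To see this, I would define $h(x)=2\sqrt{n-x}+x$ on $[0,n-1]$ and compute $h'(x)=1-1/\sqrt{n-x}$. The constraint $x\leq n-1$ gives $\sqrt{n-x}\geq 1$, so $h'(x)\geq 0$ and $h$ is non-decreasing on $[0,n-1]$. Hence $h(\kappa)\geq h(0)=2\sqrt{n}$, which is exactly the required inequality. Taking ceilings of both sides then yields $\lceil 2\sqrt{n-\kappa}+\kappa-1\rceil \geq \lceil 2\sqrt{n}-1\rceil$, and chaining with the theorem produces $\thh(G)\geq \lceil 2\sqrt{n}-1\rceil$.

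One minor thing I would flag is the boundary case $\kappa=n-1$, which occurs only when $G\cong K_n$; there $h(n-1)=2+(n-1)=n+1\geq 2\sqrt n$, so the inequality persists and no separate handling is needed. There is no substantive obstacle here: the only content beyond invoking Theorem~\ref{thm:thhkappa} is the one-line derivative calculation showing that the dependence on $\kappa$ in the theorem's bound is monotone in $\kappa$, so that dropping $\kappa$ (i.e., taking the weakest instance $\kappa=0$) still yields a valid universal lower bound.
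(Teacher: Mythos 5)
Your proposal is correct and follows essentially the same route as the paper: both derive the corollary from Theorem~\ref{thm:thhkappa}, with the paper simply plugging in $\kappa=0$. Your monotonicity check (that $2\sqrt{n-x}+x$ is non-decreasing on $[0,n-1]$) is a worthwhile addition, since Theorem~\ref{thm:thhkappa} is stated for the actual connectivity of $G$, so the paper's one-line substitution of $\kappa=0$ implicitly relies on exactly the weakening you make explicit (or on rerunning the counting argument with the trivial bound of $0$ dormant vertices).
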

\begin{proof}
All graphs have $\kappa\geq0$, so by Theorem \ref{thm:thhkappa}, $\thh(G)\geq \lceil 2\sqrt{n-0}+0-1 \rceil = \lceil 2\sqrt{n}-1 \rceil$ as desired.
\end{proof}

However, we can get a slightly better bound when only considering connected graphs.

\begin{cor}\label{cor:thhconnected}
If $G$ is a connected graph on $n$ vertices, then $\thh(G)\geq \lceill 2\sqrt{n-1} \rceill$.
\end{cor}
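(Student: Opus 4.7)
The plan is to invoke Theorem \ref{thm:thhkappa} and observe that, because $G$ is connected, $\kappa = \kappa(G) \geq 1$, so Theorem \ref{thm:thhkappa} already gives $\thh(G) \geq \lceil 2\sqrt{n-\kappa} + \kappa - 1 \rceil$. It then suffices to show that the right-hand side is at least $\lceil 2\sqrt{n-1}\rceil$, which reduces to a one-variable monotonicity check on the function
\[
f(x) = 2\sqrt{n-x} + x - 1
\]
on the interval $[1, n-1]$.

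For that monotonicity step, I would differentiate: $f'(x) = 1 - \frac{1}{\sqrt{n-x}}$, which is non-negative precisely when $n - x \geq 1$, i.e.\ $x \leq n-1$. Since $\kappa$ satisfies $1 \leq \kappa \leq n-1$ for a connected graph, $f$ is non-decreasing on the relevant range, so $f(\kappa) \geq f(1) = 2\sqrt{n-1}$. Taking the ceiling preserves this inequality, yielding $\thh(G) \geq \lceil 2\sqrt{n-1}\rceil$.

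An alternative, essentially equivalent route avoids citing the formula with the sharper $\kappa$ and instead reruns the argument of Theorem \ref{thm:thhkappa} using only the weaker bound that at each time step $t < \pth(G;B)$ there is at least one dormant vertex (which holds since $G$ is connected, otherwise the dormant set would be empty and no blue vertex would be adjacent to a white one). Substituting $1$ for $\kappa$ throughout the optimization in that proof directly produces the bound $\lceil 2\sqrt{n-1}\rceil$. Either framing is short; I would favor the first, since it simply invokes the already-proved theorem and a half-line derivative computation.

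There is no real obstacle here beyond verifying the monotonicity of $f$, which is routine. The only subtlety to flag in writing is the corner case $n=1$ (where $G = K_1$ and the statement holds vacuously with $\thh(G)=1$ and $\lceil 2\sqrt{0}\rceil = 0$), which can be handled in one sentence or simply absorbed into the assumption that $n \geq 2$ for connected graphs of interest.
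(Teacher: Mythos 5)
Your proposal is correct and takes essentially the same route as the paper, which also deduces the bound by applying Theorem \ref{thm:thhkappa} together with the observation that connectedness gives $\kappa \geq 1$. Your explicit monotonicity check of $f(x)=2\sqrt{n-x}+x-1$ (equivalently, your alternative of rerunning the theorem's argument with one dormant vertex) just makes precise the substitution $\kappa=1$ that the paper performs without comment.
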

\begin{proof}
A graph is connected if and only if it has vertex connectivity $\kappa\geq 1$. So, by Theorem \ref{thm:thhkappa}, $\thh(G)\geq \lceill 2\sqrt{n-1}+1-1 \rceill = \lceill 2\sqrt{n-1} \rceill$.
\end{proof}

The bound in Theorem \ref{thm:thhkappa} is very useful for determining the hopping throttling numbers for many different graph families. The first hopping throttling number we calculate is that of $\ol{K_n}$, the empty graph on $n$ vertices. By comparing $\ol{K_n}$ to the path graph $P_n$, we can use the standard throttling number of $P_n$ to readily derive the hopping throttling number of $\ol{K_n}$.

\begin{prop}\label{prop:thhempty}
For any empty graph $\ol{K_n}$, $\thh(\ol{K_n})=\lceil 2\sqrt{n}-1 \rceil$.
\end{prop}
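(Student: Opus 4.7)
The lower bound $\thh(\ol{K_n}) \geq \lceil 2\sqrt{n}-1 \rceil$ is immediate from Corollary \ref{cor:thhall}, so the heart of the argument is to exhibit a hopping forcing process on $\ol{K_n}$ with throttling cost at most $\lceil 2\sqrt{n}-1 \rceil$. My plan is to transport an optimal standard throttling set for the path $P_n$---which achieves $\thz(P_n) = \lceil 2\sqrt{n}-1 \rceil$ by the result of Butler and Young \cite{BY13}---over to $\ol{K_n}$ and show that the hopping game on $\ol{K_n}$ can mimic the standard forcing process on $P_n$ step for step.

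Identify $V(P_n)$ with $V(\ol{K_n})$ via a fixed labeling $v_1, \ldots, v_n$. Let $B \subseteq V(P_n)$ be a set realizing $\thz(P_n)$, and let $\calf$ be a chronological list of standard zero forces of $B$ in $P_n$ realizing $\ptz(P_n; B)$. I will argue that reading $\calf$ as a list of forces on $\ol{K_n}$ yields a valid hopping forcing process with the same propagation time. Each force $v \to w$ in $\calf$ remains valid under $\H$ on $\ol{K_n}$: since $N(v) = \emptyset$ there, the condition that every neighbor of $v$ be blue is vacuous, and $v$ has not yet performed a force because each vertex appears as a forcer at most once in any chronological list. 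A straightforward induction on the time step then shows that the blue sets in the two processes agree throughout, so $w$ is indeed white at the moment of its force in $\ol{K_n}$. Consequently $B$ is a hopping forcing set of $\ol{K_n}$ with $\pth(\ol{K_n}; B) \leq \ptz(P_n; B)$.

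Combining these inequalities gives
\[
\thh(\ol{K_n}) \leq |B| + \pth(\ol{K_n}; B) \leq |B| + \ptz(P_n; B) = \thz(P_n) = \lceil 2\sqrt{n}-1 \rceil,
\]
which together with Corollary \ref{cor:thhall} yields equality. The one subtlety to justify is that hopping on an edgeless graph is completely unrestricted apart from the rule that no vertex forces twice, which any chronological list automatically respects; once this observation is made the proof is a short translation. An alternative direct construction---starting with $b$ blue vertices and noting that exactly $b$ new vertices hop into blue status in each time step, so that $\pth(\ol{K_n}; B) \leq \lceil n/b \rceil - 1$---would also work, but this route requires minimizing $b + \lceil n/b \rceil - 1$ over $b$, which is slightly less clean than the comparison with $P_n$.
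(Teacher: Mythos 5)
Your proposal is correct and follows essentially the same route as the paper: both obtain the lower bound from Corollary \ref{cor:thhall} and the upper bound by identifying $V(\ol{K_n})$ with $V(P_n)$, taking an optimal standard throttling set of $P_n$, and observing that every standard force in that process is a valid hop on the edgeless graph (the neighbor condition being vacuous and no vertex forcing twice). The extra induction on time steps and the remark about the alternative $b + \lceil n/b\rceil - 1$ optimization are fine but add nothing the paper's argument does not already cover.
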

\begin{proof}
By Corollary \ref{cor:thhall}, we have that $\thh(\ol{K_n})\geq \lceil 2\sqrt{n}-1 \rceil$, so it remains to show that $\thh(\ol{K_n})\leq \lceil 2\sqrt{n}-1 \rceil$.

Label the vertices of $\ol{K_n}$ as $1,2,\ldots,n$, and consider the path graph $P_n$ on these vertices with edges $\{ i,i+1 \}$ for $i\in\{1,\ldots,n-1\}$. Then, let $B$ be the zero forcing set that achieves the minimum value of $|B|+\ptz(P_n;B)$, and let $\mathcal{F}$ be the set of standard forces performed which realizes that throttling number. Since no two vertices in $\ol{K_n}$ are adjacent, any active blue vertex in $\ol{K_n}$ can force any white vertex via hopping, and so every standard force in $\mathcal{F}$ is also a valid hopping force on $\ol{K_n}$. As such, since $\thz(P_n)=\lceil 2\sqrt{n}-1 \rceil$ by \cite{BY13}, we have that $\thh(\ol{K_n})\leq \lceil 2\sqrt{n}-1 \rceil$. This gives $\thh(\ol{K_n})=\lceil 2\sqrt{n}-1 \rceil$ as desired.
\end{proof}

To continue, we determine $\thh(P_n)$. A common strategy used to find the throttling number of $P_n$ is to snake the path into a box, then write the size of a hopping forcing set and its propagation time in terms of the box's dimensions. This technique was first used in \cite{BY13} to show that $\thz(P_n)=\lceil 2\sqrt{n}-1 \rceil$, and later in \cite{CHK19} to find the value of $\thz(C_n)$. Here, we use the snaking strategy to show that $\thh(P_n)$ achieves the lower bound in Theorem \ref{thm:thhkappa} for $\kappa=1$.

\begin{prop}\label{prop:thhpath}
For any path $P_n$, $\thh(P_n)=\lceill 2\sqrt{n-1} \rceill$. 
\end{prop}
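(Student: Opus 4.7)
The plan is to obtain the bound by pairing Corollary \ref{cor:thhconnected} with an explicit construction. Since $P_n$ is connected (so $\kappa \geq 1$), Corollary \ref{cor:thhconnected} already gives $\thh(P_n) \geq \lceil 2\sqrt{n-1}\rceil$. All the work will be in the reverse inequality, which I would prove by exhibiting a hopping forcing set $B$ and a chronological list of hopping forces $\calf$ with $|B| + \pth(P_n;\calf) \leq \lceil 2\sqrt{n-1}\rceil$.

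First I would select the parameters. Let $K = \lceil 2\sqrt{n-1}\rceil$ and choose positive integers $a, q$ with $a + q = K$ and $aq \geq n-1$; the natural choice is $a = \lfloor K/2\rfloor$ and $q = \lceil K/2\rceil$. A short parity check is needed: when $K$ is even, $aq = K^2/4 \geq n-1$ follows directly from $K \geq 2\sqrt{n-1}$; when $K$ is odd, $K^2$ is odd while $4(n-1)$ is even, so the inequality $K^2 \geq 4(n-1)$ upgrades to $K^2 \geq 4n-3$, whence $aq = (K^2-1)/4 \geq n-1$. Setting $b = a+1$ and $p = q-1$, we then have $b + p = K$ and $b + p(b-1) = aq + 1 \geq n$, which matches the shape of the Theorem \ref{thm:thhkappa} bound with $\kappa = 1$.

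Next I would define the forcing set and forces. Labeling the vertices of $P_n$ as $v_1 v_2 \cdots v_n$, set $B = \{v_1, v_2, \ldots, v_b\}$. At time $0$ the vertex $v_b$ is dormant (its neighbor $v_{b+1}$ is white) while $v_1, \ldots, v_{b-1}$ are active. I would define $\calf$ by specifying, at each time step $t = 1, 2, \ldots, p$, that the $b-1$ vertices that are active just before time $t$ each perform one hop, together filling in the next $b-1$ consecutive vertices to the right of the current rightmost blue vertex (with the obvious modification in the final step when $1 + (p+1)(b-1)$ would exceed $n$: simply use only $n - (1 + p(b-1))$ of the available hops).

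The step I expect to be the main obstacle is the inductive invariant. I would prove by induction on $t$ that, just after time step $t$, the blue set is the path-prefix $\{v_1, \ldots, v_{\min(1+(t+1)(b-1),\, n)}\}$, the extinct vertices form the prefix $\{v_1, \ldots, v_{t(b-1)}\}$, and the remaining $b$ non-extinct blue vertices form a contiguous block whose rightmost element is dormant and whose other $b-1$ elements are active. The induction step is essentially mechanical: when the previous active block hops to fill the next $b-1$ positions, the previously dormant rightmost blue vertex gains a blue right-neighbor and becomes active, the leftmost newly forced vertex inherits a blue left-neighbor from the just-extinct block, and all newly forced vertices except the rightmost have blue neighbors on both sides. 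Once the invariant is established, the blue set reaches the entire path by time $p$ because $1 + (p+1)(b-1) = aq + 1 \geq n$, yielding $\thh(P_n) \leq |B| + \pth(P_n;\calf) \leq b + p = K$, as needed.
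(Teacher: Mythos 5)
Your proposal is correct and follows essentially the same strategy as the paper: the lower bound comes from Corollary \ref{cor:thhconnected}, and the upper bound from coloring an initial block of roughly $\sqrt{n-1}+1$ consecutive vertices and then hopping $b-1$ new vertices blue per time step, which is exactly what the paper's snaking-into-a-box construction does column by column. The only genuine difference is the arithmetic bookkeeping: the paper verifies the ceiling by a three-case analysis of $r=n-1-m^2$, whereas you pick $a=\lfloor K/2\rfloor$, $q=\lceil K/2\rceil$ and use a parity argument to get $aq\geq n-1$, which is a slightly cleaner way to reach the same bound.
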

\begin{proof}
By Corollary \ref{cor:thhconnected}, we have that $\thh(P_n)\geq\lceill 2\sqrt{n-1} \rceill$, so it remains to show that $\thh(P_n)\leq \lceill 2\sqrt{n-1} \rceill$.

Let $m$ be the largest integer such that $m^2\leq n-1$. Then, we can embed $P_n$ into a box of height $m$ by snaking the path back and forth, with one leaf extending out of the box in the leftmost column. For example, Figure \ref{fig:p15snaking} illustrates the case where $n=15$ and $m=3$.

\begin{figure}[ht] \begin{center}
\scalebox{.4}{\includegraphics{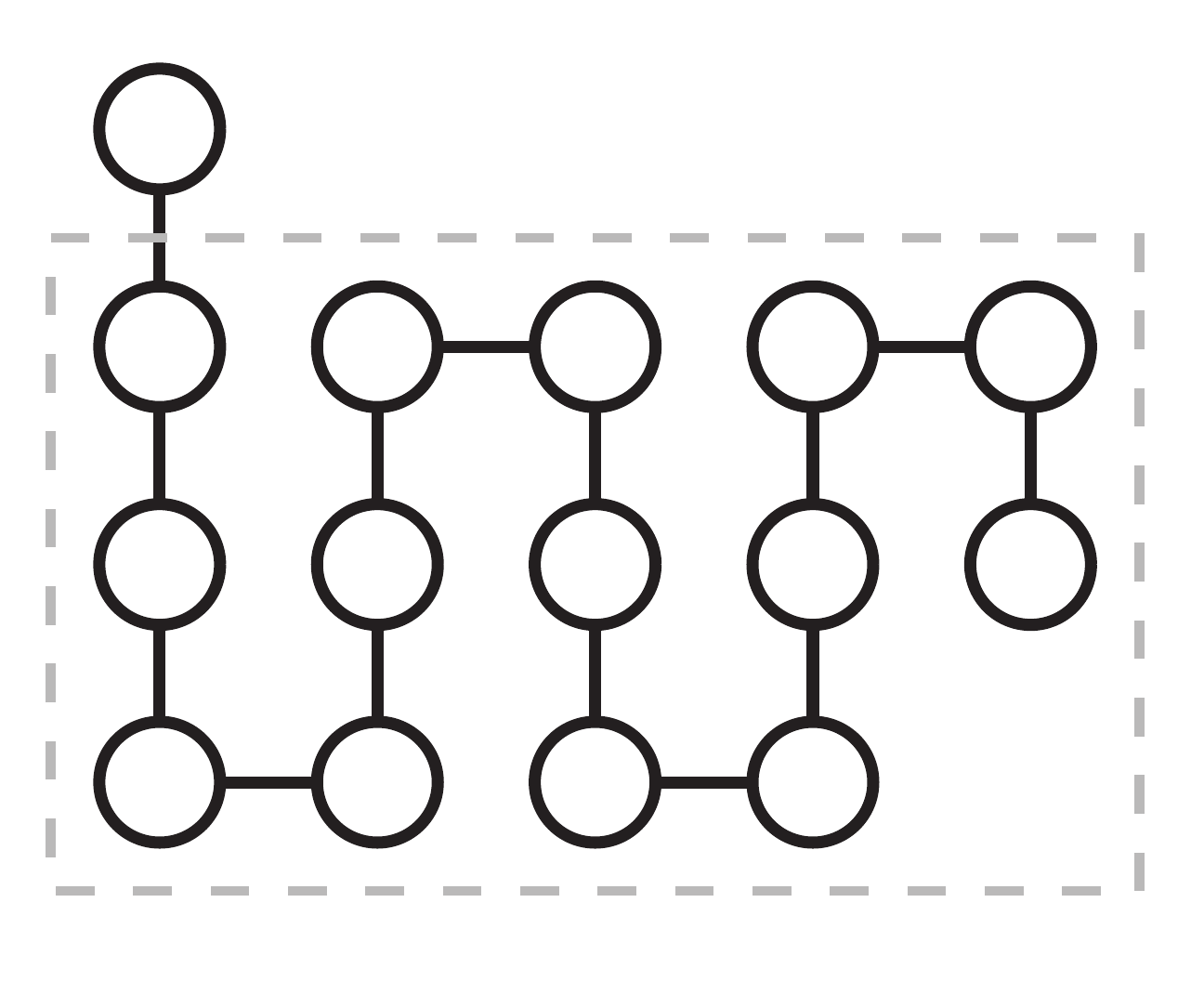}}\\
\caption{The snaking method for $P_{15}$, where $m=3$.}\label{fig:p15snaking} 
\end{center}
\end{figure}

Let $r=n-1-m^2$. If $r=0$, then $m^2=n-1$ and the path minus a leaf fits into an $m \times m$ box. If $1\leq r\leq m$, then we need an $m \times (m+1)$ box; lastly, if $m+1\leq r \leq 2m$, then we need an $m \times (m+2)$ box. Note that if $r<0$ or $r\geq2m+1$, then there exists a different largest integer $m_1$ for which $m_1^2\leq n-1$.

We then force as follows. Color the entire left column blue. In the first time step, each vertex in the leftmost column besides the bottom one forces the vertex positioned immediately down and to the right of it. Then, in the second time step, the bottom vertex in the first column forces the bottom vertex in the third column, and each vertex in the second column besides the topmost forces the vertex positioned immediately up and to the right of it. In the third time step, the top vertex in the second column forces the top vertex in the fourth column, and each vertex in the third column besides the bottommost forces the vertex positioned immediately down and to the right of it. We then alternate between the two processes from the second and third time steps, until every column has been colored blue. Figure \ref{fig:p15forcing} illustrates the forcing process for $P_{15}$, with the forcing set in blue and the numbers next to the forces representing the time step in which that force~occurs.

\begin{figure}[ht] \begin{center}
\scalebox{.4}{\includegraphics{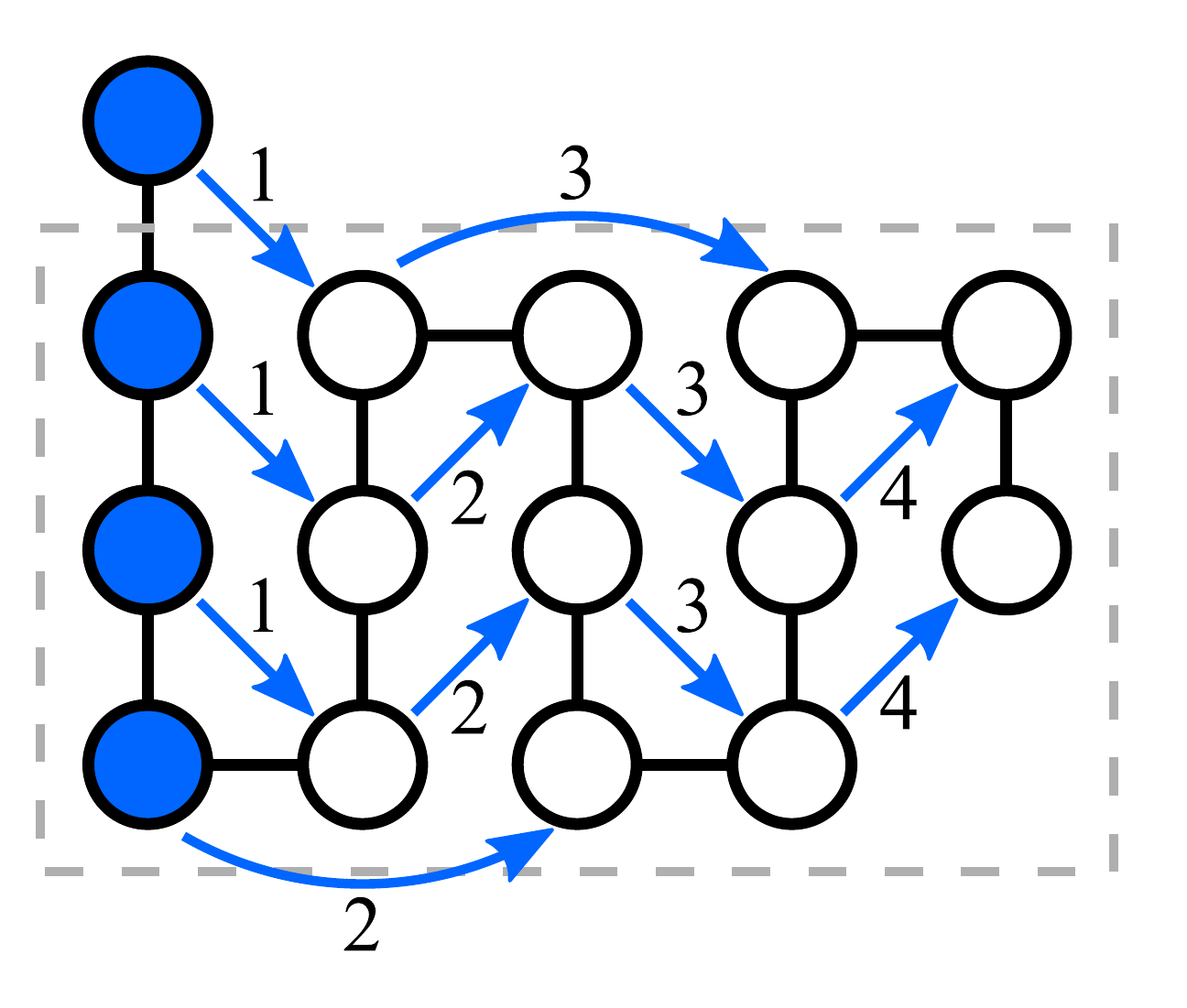}}\\
\caption{The forcing process on $P_{15}$.}\label{fig:p15forcing} 
\end{center}
\end{figure}

The forcing process described above colors one column in each time step. Thus, since we start with the $m+1$ vertices in the first column colored blue, and since the propagation time is one fewer than the number of columns of the box, we have
\[
\thh(P_n)\leq\begin{cases}
2m &\text{if } r=0; \\
2m+1 &\text{if } 1\leq r\leq m; \\
2m+2 &\text{if } m+1\leq r\leq 2m.
\end{cases}
\]

It remains to show that in each case above, $\thh(P_n)\leq\lceill 2\sqrt{n-1} \rceill$. If $r=0$, then $m^2=n-1$, so $2m=2\sqrt{n-1}=\lceill 2\sqrt{n-1} \rceill$ since $n-1$ is a perfect square. If $1\leq r\leq m$, then $m^2+r=n-1$. We have
\[
2\sqrt{n-1} = 2\sqrt{m^2+r} \leq 2\sqrt{m^2+m} < 2\sqrt{m^2+m+\frac14} = 2\left(m+\frac12 \right) = 2m+1
\]
and
\[
2\sqrt{n-1} = 2\sqrt{m^2+r}\geq 2\sqrt{m^2+1}> 2\sqrt{m^2} = 2m.
\]
Thus, $2m<2\sqrt{n-1}< 2m+1$ and so $\lceill 2\sqrt{n-1} \rceill = 2m+1$. Finally, if $m+1\leq r\leq 2m$, then $m^2+r=n-1$. We have
\[
2\sqrt{n-1} = 2\sqrt{m^2+r}\leq 2\sqrt{m^2 + 2m} < 2\sqrt{m^2 + 2m + 1} = 2(m+1) = 2m+2
\]
and
\[
2\sqrt{n-1} = 2\sqrt{m^2+r} \geq 2\sqrt{m^2+m+1} > 2\sqrt{m^2+m+\frac14} = 2\left( m + \frac12 \right) = 2m+1.
\]
Then, $2m+1<2\sqrt{n-1}< 2m+2$ and so $\lceill 2\sqrt{n-1} \rceill = 2m+2$.

Altogether, this gives $\thh(P_n)\leq \lceill 2\sqrt{n-1} \rceill$, so $\thh(P_n) = \lceill 2\sqrt{n-1} \rceill$ as desired.
\end{proof}

A similar strategy, but with two vertices extending out of the box instead of one, can be used to show that the hopping throttling number of cycles achieves the lower bound for $\kappa=2$.

\begin{prop}\label{prop:thhcycle}
For any cycle $C_n$, $\thh(C_n)=\lceil 2 \sqrt{n-2}+1 \rceil$.
\end{prop}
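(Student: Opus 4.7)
The lower bound $\thh(C_n) \geq \lceil 2\sqrt{n-2} + 1 \rceil$ follows immediately from Theorem \ref{thm:thhkappa}, since $\kappa(C_n) = 2$ for every $n \geq 3$. For the upper bound, my plan is to adapt the snaking construction from Proposition \ref{prop:thhpath}, using two extending vertices in place of the single leaf.

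Concretely, label the vertices of $C_n$ cyclically as $v_1, v_2, \ldots, v_n$, let $m$ be the largest integer with $m^2 \leq n-2$, and write $r = n - 2 - m^2$. I would snake the subpath $v_3 v_4 \cdots v_n$ (of length $n-2$) into an $m \times k$ box, where $k = m$ if $r = 0$, $k = m+1$ if $1 \leq r \leq m$, and $k = m+2$ if $m+1 \leq r \leq 2m$. The two remaining vertices $v_1$ and $v_2$ lie outside the box and close the cycle: $v_2$ is attached to $v_3$ (the top of column $1$), and $v_1$ is adjacent to $v_2$ and $v_n$ (the final vertex of the snake). The initial blue set $B$ consists of the leftmost column together with $v_1$ and $v_2$, so $|B| = m + 2$.

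The forcing process closely mirrors the path case, with $v_2$ playing the role of the extending leaf. In the first time step, $v_2$ hops to the top of column $2$ while every vertex in column $1$ except the bottom hops one cell down-and-to-the-right; in each subsequent time step $t \leq k-1$, the extreme vertex of column $t-1$ (which became active at the end of step $t-1$) hops across to the far end of column $t+1$, while each remaining vertex of column $t$ hops one cell up- or down-and-to-the-right into column $t+1$. One column is colored per time step, so the propagation time is at most $k-1$, yielding $\thh(C_n; B) \leq m + k + 1$, which equals $2m+1$, $2m+2$, or $2m+3$ in the three cases.

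Matching this to $\lceil 2\sqrt{n-2} + 1 \rceil$ uses the same three elementary inequalities as in the proof of Proposition \ref{prop:thhpath}, with $n-1$ replaced by $n-2$ and a uniform $+1$ shift; for instance, when $1 \leq r \leq m$ one checks $2m < 2\sqrt{n-2} < 2m+1$, so $\lceil 2\sqrt{n-2}+1 \rceil = 2m+2$. The main obstacle will be the forcing bookkeeping: verifying that exactly $m$ active vertices are available at each intermediate time step (so every prescribed move is a valid hop), that $v_1$ remains dormant throughout because $v_n$ stays white until column $k$ is colored, and that a possibly partial final column of at most $m$ vertices can still be completed in a single step using the active vertices on hand. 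The permanently dormant $v_1$ is precisely the extra slack predicted by passing from $\kappa = 1$ in Proposition \ref{prop:thhpath} to $\kappa = 2$ here in Theorem \ref{thm:thhkappa}.
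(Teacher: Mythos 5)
Your proposal is correct and follows essentially the same route as the paper: the lower bound from Theorem \ref{thm:thhkappa} with $\kappa(C_n)=2$, and the upper bound by snaking the cycle minus two vertices into an $m\times k$ box, coloring the leftmost column plus the two external vertices (so $m+2$ initial blue vertices, one of which stays dormant), coloring one column per time step, and finishing with the same three-case ceiling computation. The bookkeeping you flag (one permanently dormant vertex, $m$ active vertices per step, partial last column) is exactly what the paper handles by noting that forcing proceeds as for paths except the top-left vertex never forces.
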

\begin{proof}
Since the vertex connectivity of a cycle is $\kappa=2$, by Theorem \ref{thm:thhkappa}, we have that $\thh(C_n)\geq \lceil 2\sqrt{n-2} + 1 \rceil$.

Let $m$ be the largest integer such that $m^2\leq n-1$. Then, similar to the snaking method for paths outlined above, we can embed $C_n$ into a box of height $m$, with two vertices extending out of the box in the leftmost column.

Let $r=n-2-m^2$. If $r=0$, then $m^2=n-2$ and the cycle minus two vertices fits into an $m \times m$ box. If $1\leq r\leq m$, then we need an $m \times (m+1)$ box; lastly, if $m+1\leq r \leq 2m$, then we need an $m \times (m+2)$ box. Note that if $r<0$ or $r\geq2m+1$, then there exists a different largest integer $m_1$ for which $m_1^2\leq n-2$.

Forcing then occurs similarly to paths, but the top-left vertex never forces. For example, Figure \ref{fig:c16forcing} illustrates the case where $n=16$ and $m=3$.

\begin{figure}[ht] \begin{center}
\scalebox{.4}{\includegraphics{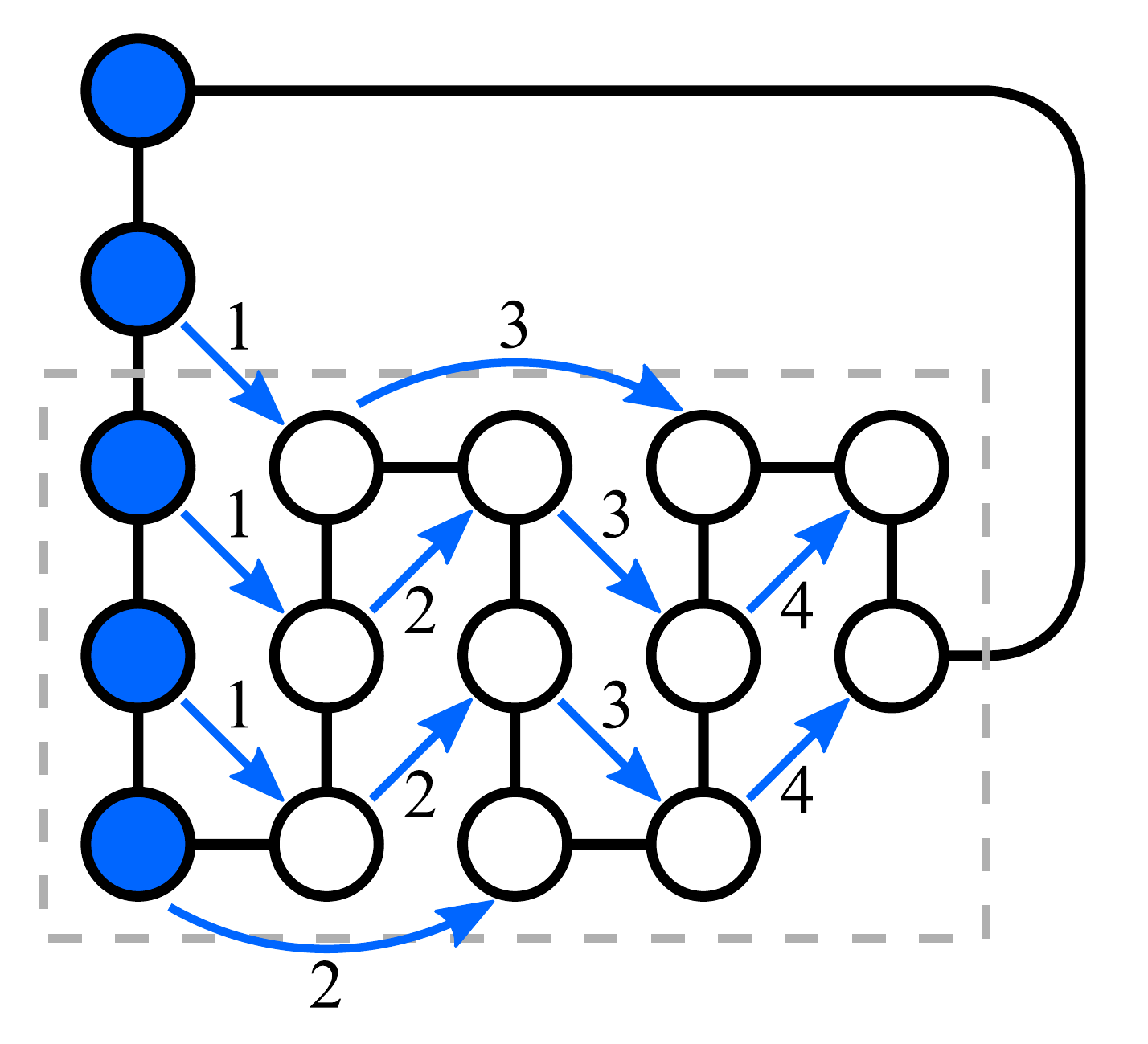}}\\
\caption{The snaking method for $C_{16}$, where $m=3$.}\label{fig:c16forcing} 
\end{center}
\end{figure}

The forcing process above colors one column in each time step. Thus, since we start with the $m+2$ vertices in the first column colored blue, and since the propagation time is one fewer than the number of columns of the box, we have
\[
\thh(C_n)\leq\begin{cases}
2m+1 &\text{if } r=0; \\
2m+2 &\text{if } 1\leq r\leq m; \\
2m+3 &\text{if } m+1\leq r\leq 2m.
\end{cases}
\]

It remains to show that in each case above, $\thh(C_n)\leq \lceil 2\sqrt{n-2}+1 \rceil$. If $r=0$, then $m^2=n-2$, so $2m+1=2\sqrt{n-2}+1=\lceil 2\sqrt{n-2}+1 \rceil$ since $n-2$ is a perfect square. If $1\leq r\leq m$, then $m^2+r=n-2$. We have
\[
2\sqrt{m^2+r}+1 \leq 2\sqrt{m^2+m} +1\leq 2\sqrt{m^2+m+\frac14}+1 = 2\left(m+\frac12 \right)+1 = 2m+2
\]
and
\[
2\sqrt{m^2+r}+1\geq 2\sqrt{m^2+1}+1> 2\sqrt{m^2}+1 = 2m+1.
\]
Thus, $2m+1<2\sqrt{n-1}\leq 2m+2$ and so $\lceil 2\sqrt{n-2}+1 \rceil = 2m+2$. Finally, if $m+1\leq r\leq 2m$, then $m^2+r=n-2$. We have
\[
2\sqrt{m^2+r}+1\leq 2\sqrt{m^2 + 2m} +1\leq 2\sqrt{m^2 + 2m + 1}+1 = 2(m+1)+1 = 2m+3
\]
and
\[
2\sqrt{m^2+r}+1 \geq 2\sqrt{m^2+m+1}+1 > 2\sqrt{m^2+m+\frac14} +1= 2\left( m + \frac12 \right)+1 = 2m+2.
\]
Then, $2m+2<2\sqrt{n-2}+1\leq 2m+3$ and so $\lceil 2\sqrt{n-2} +1 \rceil = 2m+3$. Altogether, this gives $\thh(C_n)\leq\lceil 2\sqrt{n-2} +1\rceil$, so $\thh(C_n) = \lceil 2\sqrt{n-2}+1 \rceil$ as desired.
\end{proof}

So far, we have shown that the lower bound in Theorem \ref{thm:thhkappa} is tight when $\kappa=1$ (paths) and when $\kappa=2$ (cycles). In fact, the complete bipartite graphs $K_{s,t}$ demonstrate that this bound is tight for all $\kappa$. By thinking of one partite set as an empty graph, we can use the throttling number of $\ol{K_n}$ to give a hopping strategy that meets the throttling lower bound.

\begin{prop}\label{prop:thhkappatight}
The lower bound in Theorem \ref{thm:thhkappa} is tight for all $\kappa\geq 0$.
\end{prop}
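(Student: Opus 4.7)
The plan is to exhibit complete bipartite graphs as witnesses. For each $\kappa \geq 1$, I would take $G = K_{\kappa,t}$ with partite sets $U$ of size $\kappa$ and $V$ of size $t$ (any $t \geq \kappa$), so that $n = \kappa + t$ and the vertex connectivity of $G$ is exactly $\kappa$ (the standard argument: any separator must contain all of $U$ or all of $V$). The case $\kappa = 0$ is already covered by Proposition \ref{prop:thhempty} via $\ol{K_n}$. Since Theorem \ref{thm:thhkappa} gives the lower bound $\thh(G) \geq \lceil 2\sqrt{n-\kappa} + \kappa - 1 \rceil$ automatically, everything reduces to matching it from above.

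The key observation is that once $U$ is entirely blue, the partite set $V$ behaves, from the perspective of hopping, exactly like the empty graph $\ol{K_t}$. Indeed, for any blue vertex $v \in V$, the neighborhood $N_G(v) = U$ is entirely blue and stays that way throughout the process (vertices of $U$ start blue and are never forced upon), so $v$ is automatically active and can hop to any white vertex in $V$. Because $V$ is an independent set in $G$, the hopping dynamics restricted to $V$ are indistinguishable from hopping on $\ol{K_t}$.

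Given this, I would choose as initial blue set $B = U \cup B'$, where $B' \subseteq V$ is an optimal hopping throttling set of $\ol{K_t}$. By Proposition \ref{prop:thhempty}, $|B'| + \pth(\ol{K_t}; B') = \lceil 2\sqrt{t} - 1 \rceil$. Any chronological list of hops realizing this on $\ol{K_t}$ lifts verbatim to a valid chronological list of hops in $G$ starting from $B$, coloring all of $V$ blue in the same number of time steps and using only vertices of $V$ as forcers. Hence
\[
\thh(K_{\kappa, t}) \leq \kappa + \thh(\ol{K_t}) = \kappa + \lceil 2\sqrt{t} - 1 \rceil = \lceil 2\sqrt{n-\kappa} + \kappa - 1 \rceil,
\]
which matches the lower bound and proves tightness.

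The main obstacle is the careful verification that the hopping process on $\ol{K_t}$ transfers faithfully to $G$: one must check that every vertex of $V$ has the same active/dormant/extinct status at each time step in the two processes, and that no force in the lifted list requires a vertex of $U$ to act as a forcer. Both facts follow from the observation that $N_G(v) = U$ remains fully blue for every $v \in V$ throughout the process, so this is essentially bookkeeping rather than a deep difficulty.
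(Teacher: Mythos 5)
Your proposal is correct and follows essentially the same route as the paper: both use the complete bipartite graphs $K_{\kappa,t}$ (with $\kappa(K_{\kappa,t})=\kappa$), color the small partite set $U$ blue, and then throttle on the independent set $V$ exactly as on $\ol{K_t}$ via Proposition \ref{prop:thhempty}, giving the matching upper bound $\kappa+\lceil 2\sqrt{t}-1\rceil=\lceil 2\sqrt{n-\kappa}+\kappa-1\rceil$. Your explicit bookkeeping that every $v\in V$ stays active because $N_G(v)=U$ remains blue is the same observation the paper makes, so there is no substantive difference.
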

\begin{proof}
Consider the complete bipartite graph $K_{s,t}$ on $s+t$ vertices for any $s\geq 0$ and $t\geq 1$; assume $s\leq t$ with $s\geq 0$ and $t\geq 1$. Note that when $s=0$, $K_{0,t}=\ol{K_t}$. Observe, then, that $\kappa(K_{s,t})=\min\{s,t\}=s$, so by Theorem \ref{thm:thhkappa},
\[
\thh(K_{s,t}) \geq \lceil 2\sqrt{s+t-s}+s-1 \rceil = \lceil 2\sqrt{t} + s - 1 \rceil.
\]

To show the other inequality, we provide an explicit hopping forcing strategy that realizes this lower bound. Let $U$ be the partite set of $K_{s,t}$ of size $s$, and $V$ the partite set of size $t$. Observe that if we color $U$ blue, then coloring any vertex in $V$ blue makes it active, since each vertex in $V$ is only adjacent to vertices in $U$. Further, since $G[V]$ is isomorphic to $\ol{K_t}$, we are able to force $V$ to become blue using the same strategy as with $\ol{K_t}$ from Proposition \ref{prop:thhempty}.

So, consider the hopping forcing set consisting of $U$ and the subset of vertices in $V$ needed to optimize throttling on $V$. We know that the sum of the number of blue vertices in $V$ and the propagation time of the whole graph is $\lceil 2\sqrt{t}-1 \rceil$ by Proposition \ref{prop:thhempty}, so adding in the $s$ blue vertices from $U$, we have that
\[
\thh(K_{s,t})\leq s+ \lceil 2\sqrt{t}+1 \rceil = \lceil 2\sqrt{t} + s - 1 \rceil.
\]
This gives $\thh(K_{s,t}) = \lceil 2\sqrt{t} + s - 1 \rceil$, as desired.
\end{proof}

Figure \ref{fig:k35forcing} illustrates the hopping forcing process on $K_{3,5}$. By coloring every vertex in $U$ blue, we can then optimize throttling on $V$ to achieve the throttling number of $\thh(K_{3,5})=7$.

\begin{figure}[ht] \begin{center}
\scalebox{.4}{\includegraphics{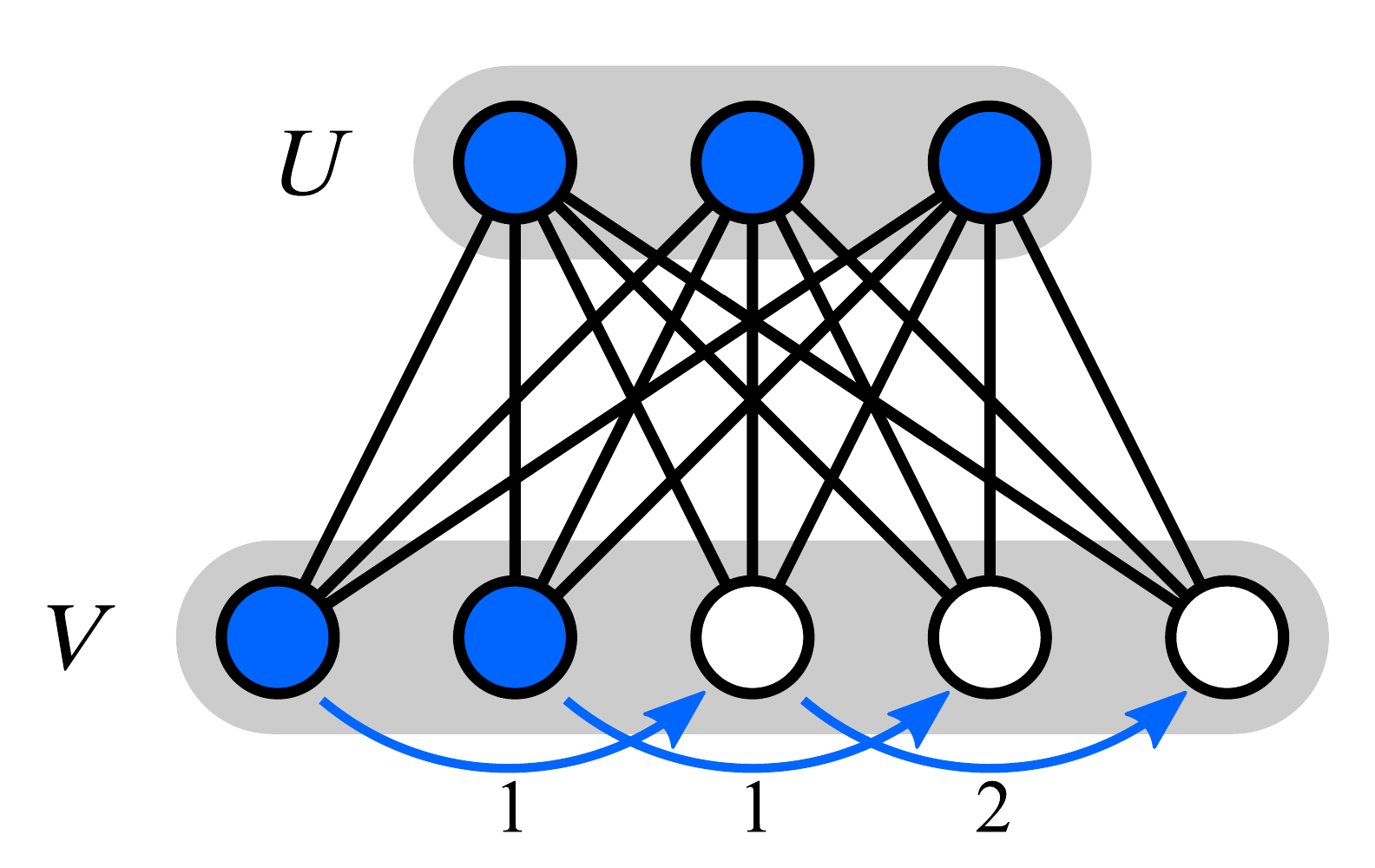}}\\
\caption{The forcing process on $K_{3,5}$.}
\label{fig:k35forcing} 
\end{center}
\end{figure}

We now know that infinitely many graphs realize the lower bound in Theorem \ref{thm:thhkappa} for every $\kappa\geq 0$. Then, it is natural to consider which other graphs might, and which might not, realize this lower bound. For example, so far, we have shown that two families of trees realize the lower bound---paths in Proposition \ref{prop:thhpath} and stars ($K_{1,n-1}$) in Proposition \ref{prop:thhkappatight}. Do all trees have $\thh(T)=\lceill 2\sqrt{n-1} \rceill$?

The answer is no. To show that there are infinitely many trees $T$ with $\thh(T)>\lceill 2\sqrt{n-1} \rceill$, we introduce the following family of graphs. Recall that a \emph{spider graph} is a tree with exactly one vertex of degree at least $3$, and all other vertices of degree at most $2$. A \emph{degree-3 spider} is a spider graph with maximum degree $3$; that is, it consists of a \emph{center} vertex adjacent to three disjoint paths (called \emph{legs}) of length $a$, $b$, and $c$, such as in Figure \ref{fig:deg3spider}. We will denote such graphs $S(a,b,c)$. Note that $S(a,b,c)$ has order $n=a+b+c+1$. 


\begin{figure}[ht] \begin{center}
\scalebox{.3}{\includegraphics{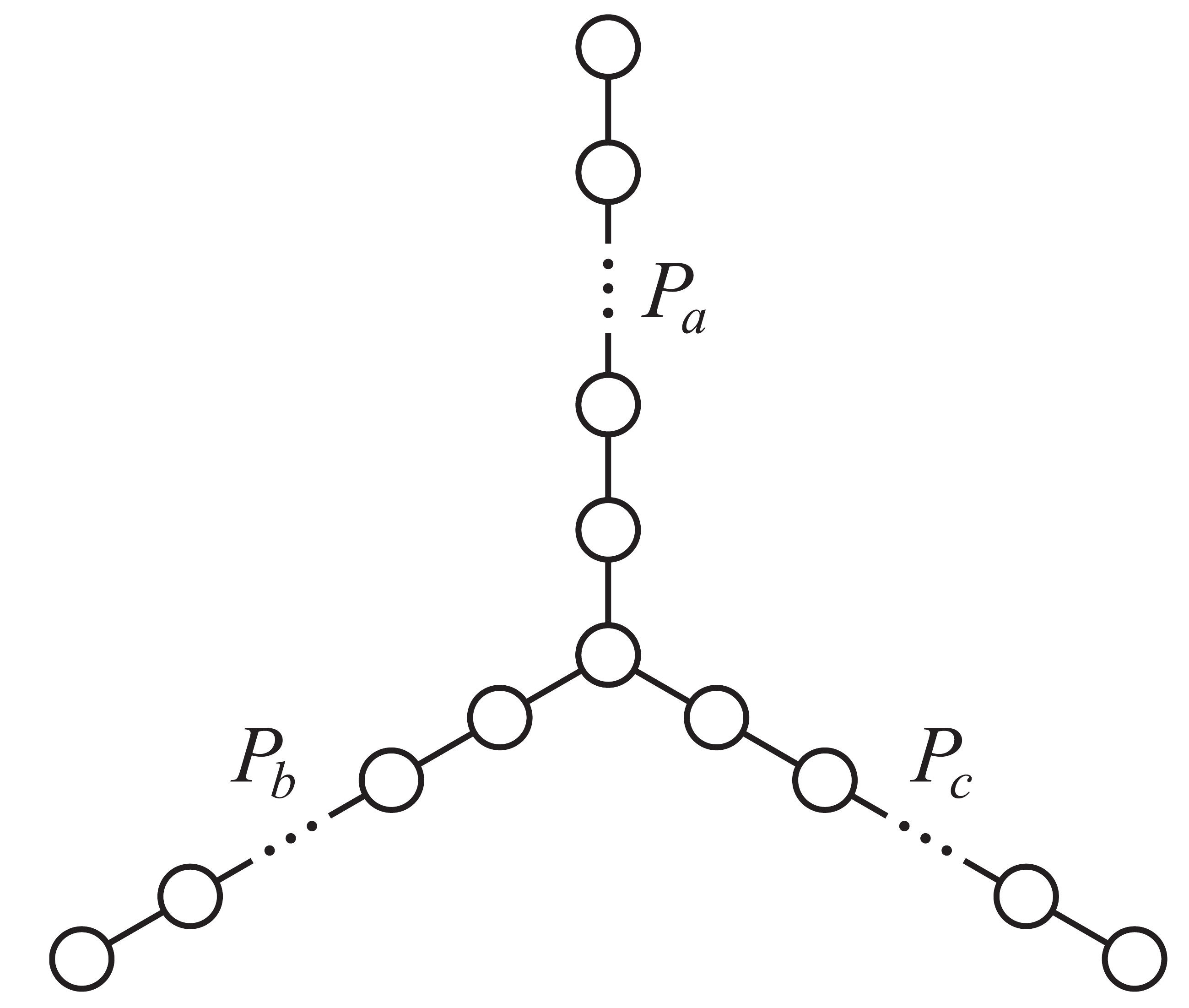}}\\
\caption{The degree-3 spider $S(a,b,c)$.}
\label{fig:deg3spider} 
\end{center}
\end{figure}

\begin{prop}\label{prop:thhtrees}
There exist infinitely many trees $T$ for which
\[
\thh(T) > \lceill 2\sqrt{n-\kappa}+\kappa-1 \rceill = \lceill 2\sqrt{n-1} \rceill.
\]
\end{prop}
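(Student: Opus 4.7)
The plan is to exhibit the family of degree-$3$ spider graphs $T_k = S(k,k,k)$ with $n=3k+1$ vertices, for $k$ sufficiently large, as the infinite family of counterexamples. Since $T_k$ is a tree, $\kappa(T_k) = 1$, so the bound from Theorem~\ref{thm:thhkappa} reduces to $\lceil 2\sqrt{3k} \rceil$; the goal is to show that $\thh(T_k) > \lceil 2\sqrt{3k} \rceil$ for all sufficiently large $k$.

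The key step is to sharpen the dormant-vertex counting in the proof of Theorem~\ref{thm:thhkappa} by exploiting the three-leg structure of $T_k$. I claim that in any hopping forcing process on $T_k$ with forcing set $B$, at each intermediate time step at which all three legs still contain at least one white vertex, the number $d_t$ of dormant blue vertices is at least $3$. Each leg containing both a blue and a white vertex contributes a \emph{frontier} dormant blue vertex (the blue vertex on the leg adjacent to a white neighbor on the leg), and these three frontier vertices lie on distinct legs and hence are themselves distinct. A case analysis handles the degenerate subcases where several frontiers would coincide at the center $c$ (which occurs when the neighbor of $c$ on multiple legs is white), or where some leg is entirely white, by locating additional dormant vertices further inside the affected legs.

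Next, I would bound the number of \emph{end-stage} time steps---those during which fewer than three legs contain white vertices---by $O(\sqrt{k})$. This follows because once only one or two legs retain whites, the remaining forcing is essentially hopping forcing on a residual subgraph consisting of one or two attached paths, whose propagation time is $O(\sqrt{k})$ by Proposition~\ref{prop:thhpath}. Combining the two counts with the identity $\sum_t d_t = |B|(\pth(T_k;B)+1) - n$ that is implicit in the proof of Theorem~\ref{thm:thhkappa}, and minimizing $|B| + \pth$ subject to the refined constraint, yields the lower bound $\thh(T_k) \geq 2 + 2\sqrt{3k - 2 - O(\sqrt{k})}$. An elementary asymptotic calculation shows that for all sufficiently large $k$ this strictly exceeds $\lceil 2\sqrt{3k} \rceil$, producing the desired infinite family.

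The main obstacle will be the case analysis establishing $d_t \geq 3$ in the bulk phase, particularly when frontiers from different legs would coincide at the center, or when individual legs become fully blue at different times; each such subcase requires one to locate additional dormant vertices on the legs themselves. An additional subtle point is ensuring the $O(\sqrt{k})$ end-stage bound is tight enough that the improvement obtained in the bulk phase is not washed out.
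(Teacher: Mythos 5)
There is a genuine gap: your central claim---that at every time step in which all three legs still contain a white vertex there are at least $3$ dormant vertices---is false, and the counterexample is exactly the near-optimal strategy. Take $B$ (or the blue set at some later time) to be a segment of consecutive vertices at the leaf end of one leg, with the other two legs and the center entirely white. All three legs contain white vertices, yet the only blue vertex adjacent to a white vertex is the innermost blue vertex of that one segment, so there is exactly one dormant vertex. Your proposed repair (``locating additional dormant vertices further inside the affected legs'') cannot work here because the affected legs contain no blue vertices at all; a leg contributes a frontier dormant vertex only if it contains both a blue and a white vertex, and in the relevant strategies only one leg does so for most of the process. Consequently the refined constraint $\sum_t d_t \geq 3\cdot(\text{bulk steps})$ fails, the bound $\thh(T_k)\geq 2+2\sqrt{3k-O(\sqrt{k})}$ does not follow, and the bulk/end-stage decomposition collapses. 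It is also not clear that the balanced spider $S(k,k,k)$ even has $\thh$ exceeding the ceiling bound for all large $k$; nothing in your argument rules out that the slack in the ceiling absorbs the defect for infinitely many $k$.

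For contrast, the paper does not try to improve the per-step dormant count beyond $1$ (which is all that $\kappa=1$ gives, and all that is true). Instead it chooses the leg lengths $3m^2-1,\,3m^2,\,3m^2+1$, so that $n-1=9m^2$ is a perfect square and the counting inequality $k+(6m-k)(k-1)\geq 9m^2+1$ reduces to $-(k-(3m+1))^2\geq 0$, forcing $|B|=3m+1$ and, more importantly, forcing \emph{exactly} $|B|-1$ forces in \emph{every} time step, i.e.\ exactly one dormant vertex at all times. That rigidity pins the initial set down to $3m+1$ consecutive vertices at the end of a single leg (each leg being longer than $3m+1$), and then a short case analysis at the moment the forcing reaches the center shows two dormant vertices are unavoidable, a contradiction. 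In other words, the correct argument exploits the equality case of the weak bound at one critical time step, rather than a strengthened bound holding throughout the process; if you want to salvage your approach you would need to replace the false ``three frontiers'' claim with an analysis of what happens when the single active front arrives at the center, which is essentially the paper's proof.
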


\begin{proof}
Pick any integer $m\geq 2$, and consider the degree-3 spider 
\[
G=S(3m^2-1, 3m^2, 3m^2+1)
\]
on $9m^2+1$ vertices. Since $G$ is connected and removing the center vertex disconnects the graph, $\kappa(G)=1$. So, by Theorem \ref{thm:thhkappa},
\[
\thh(G)\geq \lceill 2\sqrt{9m^2+1-1} + 1 - 1 \rceill = 6m.
\]
We will show by contradiction that no hopping forcing set of $G$ realizes this lower bound.

Suppose that such a hopping forcing set $B$ does exist with $\thh(G;B)=6m$. Let $k=|B|$. Since we start with $k$ vertices colored blue, we must have that $\pth(G;B)=6m-k$ in order to realize this throttling number. Because $\kappa(G)=1$, the highest number of vertices that can be forced in any particular time step is $k-1$. As such, the maximum number of vertices that can be colored blue at the end of $6m-k$ time steps is $k+(6m-k)(k-1)$. Since $B$ is a hopping forcing set, all $9m^2+1$ vertices of $G$ must be colored blue by the end of the forcing process, so we need 
\begin{alignat*}{2}
        &\qquad & k+(6m-k)(k-1)                 &\geq 9m^2 + 1      \\
\iff    &       & k - k^2 + (6m+1)k-6m          &\geq 9m^2 + 1      \\
\iff    &       & -k^2 + (6m+2)k - (9m^2+6m+1)  &\geq 0             \\
\iff    &       & -k^2 + 2(3m+1)k - (3m+1)^2    &\geq 0             \\
\iff    &       & -(k-(3m+1))^2                 &\geq 0             \\
\iff    &       & 0                             &\geq (k-(3m+1))^2.
\end{alignat*}
The only solution to this inequality is $k=3m+1$. When $k=3m+1$, the maximum number of blue vertices after $6m-k=3m-1$ time steps is
\[
k+(6m-k)(k-1) = 3m+1+(6m-3m-1)(3m+1-1) = 9m^2+1 = |G|.
\]
As a result, in order for $B$ to realize the minimum throttling number, we must force $k-1$ vertices to become blue in every time step. In other words, we need to have only one dormant vertex at every time during the forcing process.

Consider the vertices in $B$, the blue vertices at time $0$. Note that the smallest of the legs in $G$ has $3m^2-1$ vertices, but for $m\geq 2$,
\begin{equation}\label{eqn:spiderineq}
3m^2-1>k=3m+1.
\end{equation}
Thus, in order to have only one dormant vertex in $B$, we must color $3m+1$ consecutive vertices at the end of one leg of $G$; any other set of size $3m+1$ will include at least two blue vertices each adjacent to a white vertex.

Afterwards, to continue to have only one dormant vertex at each time, we must force along the leg as we would with a path graph. Since we start with $3m+1$ vertices colored blue, and force $3m$ more to be blue in each time step, observe that after $m-1$ time steps,
\[
3m+1+(3m)(m-1)=3m^2+1
\]
vertices in $G$ are colored blue.

If $B$ consists of the $3m+1$ vertices at the end of the shortest leg of $G$ (that of length $3m^2-1$), then after $m-1$ time steps we will have forced that entire leg, plus the center and one additional vertex; the latter two must both be dormant.

If $B$ consists of the $3m+1$ vertices at the end of the longest leg of $G$ (that of length $3m^2+1$), then after $m-1$ time steps that entire leg will be blue. In the next time step, we have to force the center to avoid having two dormant vertices, plus $3m-1$ additional vertices. The only way to do this without having at least two dormant vertices would be to force another leg to be entirely blue, but by Equation \ref{eqn:spiderineq}, this is impossible; as such, we will end up with two dormant vertices after time step $m$.

Thus, our only option is for $B$ to consist of the $3m+1$ vertices at the end of the middle leg of $G$, which has length $3m^2$. After $m-1$ time steps, we will have forced that entire leg to be blue, plus the center, the only dormant vertex at this time step. However, in time step $m$, there is no way to force such that only one vertex is dormant. If we force the two white vertices adjacent to the center, we will end up with a dormant vertex on each path. Alternatively, if we try and force along a leg, the center and one vertex on that leg will both be dormant, since both remaining legs are, by Equation \ref{eqn:spiderineq}, too long to force entirely.

So, for any integer $m\geq 2$, there exists a time step in which we will not be able to force the maximum $k-1$ vertices. This implies the total number of blue vertices after $3m-1$ time steps is less than $n$, and so $\thh(G;B)>6m$ as desired.
\end{proof}

Proposition \ref{prop:thhtrees} gives an infinite family of graphs that do not realize the lower bound in Theorem \ref{thm:thhkappa} when $\kappa=1$. We can extend this result to all $\kappa\geq 0$ by considering a family in which we strategically add edges to a complete bipartite graph in order to keep the graph's vertex connectivity low, but increase its hopping throttling number.

\begin{prop}
For all $\kappa\geq 0$, there exist infinitely many graphs $G$ for which
\[
\thh(G) > \lceil 2\sqrt{n-\kappa}+\kappa-1 \rceil.
\]
\end{prop}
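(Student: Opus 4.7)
The plan is to construct, for each $\kappa \geq 0$ and each integer $m \geq 2$, a graph $G_m$ built from the spider $S = S(3m^2-1,\,3m^2,\,3m^2+1)$ of Proposition~\ref{prop:thhtrees}. When $\kappa \geq 1$, let $G_m$ be obtained from $S$ by adding $\kappa-1$ new vertices, each made adjacent to every vertex of $S$ so as to be universal in $G_m$. When $\kappa = 0$, let $G_m = S \sqcup K_1$. A short argument shows $\kappa(G_m) = \kappa$: in the $\kappa \geq 1$ case any vertex cut must contain every universal vertex (otherwise the surviving universal vertex keeps the graph connected) and then at least one cut vertex of the spider, while the $\kappa = 0$ case is disconnected by construction. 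Hence $n = 9m^2 + \kappa$ if $\kappa \geq 1$ and $n = 9m^2 + 2$ if $\kappa = 0$, and the bound of Theorem~\ref{thm:thhkappa} evaluates to $6m + \kappa - 1$ and $6m$, respectively.

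For the $\kappa \geq 1$ case, the first key observation is that every universal vertex $u$ must lie in every hopping forcing set of $G_m$ and must remain dormant throughout the process: whenever any other vertex is white, $u$ has a white neighbor and so cannot hop, and by the same reason no vertex can hop-force $u$. Assume for contradiction that some hopping forcing set $B$ satisfies $\thh(G_m;B) \leq 6m + \kappa - 1$, and let $k = |B|$, $p = \pth(G_m;B)$. The dormant-counting argument used in the proof of Theorem~\ref{thm:thhkappa} gives $k + p(k - \kappa) \geq n$, and completing the square exactly as in the proof of Proposition~\ref{prop:thhtrees} collapses this to $(k - (3m+\kappa))^2 \leq 0$. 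Therefore $k = 3m + \kappa$, $p = 3m - 1$, exactly $3m$ vertices are forced per time step, and the $\kappa-1$ always-dormant universal vertices are accompanied by exactly one dormant $V(S)$-vertex at every step.

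Since the universal vertices stay blue throughout, the hopping condition for any vertex of $V(S)$ in $G_m$ coincides with its hopping condition in $S$. The induced process on $V(S)$ therefore starts from $B \cap V(S)$ (a set of size $3m + 1$) and forces exactly $3m$ vertices of $V(S)$ per step for $3m - 1$ steps, with exactly one dormant $V(S)$-vertex at each step. These are precisely the extremal conditions ruled out in the proof of Proposition~\ref{prop:thhtrees}, yielding the desired contradiction.

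For the $\kappa = 0$ case, the isolated vertex $v$ is never dormant, but the connectivity of $S$ still forces at least one dormant $V(S)$-vertex whenever some $V(S)$-vertex is still white. Split into cases on whether $v \in B$ and whether $v$ is the last vertex to become blue. In every case except the one where $V(S)$ becomes fully blue before $v$ is forced, the dormant count is at least $1$ at each step through $p - 1$, giving $k + p(k-1) \geq n = 9m^2 + 2$; with $p = 6m - k$ this becomes a quadratic in $k$ with discriminant $-4$, ruling out any integer solution. The exceptional configuration yields instead $k + (p-1)(k-1) + 1 \geq n$, whose discriminant is $1 - 12m$, negative for every $m \geq 1$. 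The main obstacle is the $\kappa = 0$ bookkeeping: unlike a universal vertex, the isolated $v$ can interact with hopping in several qualitatively different ways, each of which has to be ruled out separately, but in every such case the algebraic argument above (driven by the connectivity of the spider) closes the proof.
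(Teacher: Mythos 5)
Your construction is correct, but it is a genuinely different route from the paper's. The paper proves this proposition with a fresh family $K(s,t)=\ol{K_s}\vee(K_{t-1}\cup K_1)$ (a complete bipartite graph with one side turned into a clique minus a vertex) and a direct four-case analysis of which partite sets lie in $B$; those examples overshoot the bound drastically (their throttling number is essentially $n$), and the argument is self-contained. You instead lift the spider counterexample of Proposition \ref{prop:thhtrees}: adding $\kappa-1$ universal vertices pins the connectivity at $\kappa$, forces those vertices into $B$ and keeps them dormant, and the same completing-the-square count collapses any putative optimal set to $k=3m+\kappa$, $p=3m-1$ with exactly one dormant spider vertex per step. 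Your reduction is sound, and in fact you can simplify it further: since the universal vertices never force and are never forced, the restriction of the process to $V(S)$ is itself a valid hopping forcing process of $S$ with $|B\cap V(S)|=3m+1$ and propagation time $3m-1$, i.e.\ $\thh(S)\leq 6m$, which contradicts the \emph{statement} of Proposition \ref{prop:thhtrees} directly -- no need to re-enter its proof and re-verify the extremal conditions. Your $\kappa=0$ case needs separate care precisely because an isolated vertex, unlike a universal one, can both force and be forced; your two counting inequalities and their discriminants ($-4$ in the generic case, $1-12m$ when $V(S)$ fills in before the isolated vertex) are correct, as is the evaluation $\lceil 2\sqrt{9m^2+2}-1\rceil = 6m$. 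The trade-off: the paper's family gives a cleaner, stand-alone argument with a large gap above the bound, while your family shows that the failure of tightness already present in trees persists under universal-vertex augmentation for every $\kappa$, at the cost of leaning on the earlier spider result and a small amount of $\kappa=0$ bookkeeping.
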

\begin{proof}
Define the graph $K(s,t) = \ol{K_s} \vee (K_{t-1} \cup K_1)$ for $s\leq t$. In other words, start with the complete bipartite graph $K_{s,t}$ with $0\leq s\leq t$ and $t\geq 1$, where $U$ is the partite set of size $s$ and $V$ the partite set of size $t$. Pick one vertex $v\in V$, then draw all possible edges between the vertices in $V'=V\setminus \{ v \}$ so that $V'$ induces $K_{t-1}$. The instance where $s=3$ and $t=5$ is illustrated in Figure \ref{fig:K35}.

\begin{figure}[ht] \begin{center}
\scalebox{.4}{\includegraphics{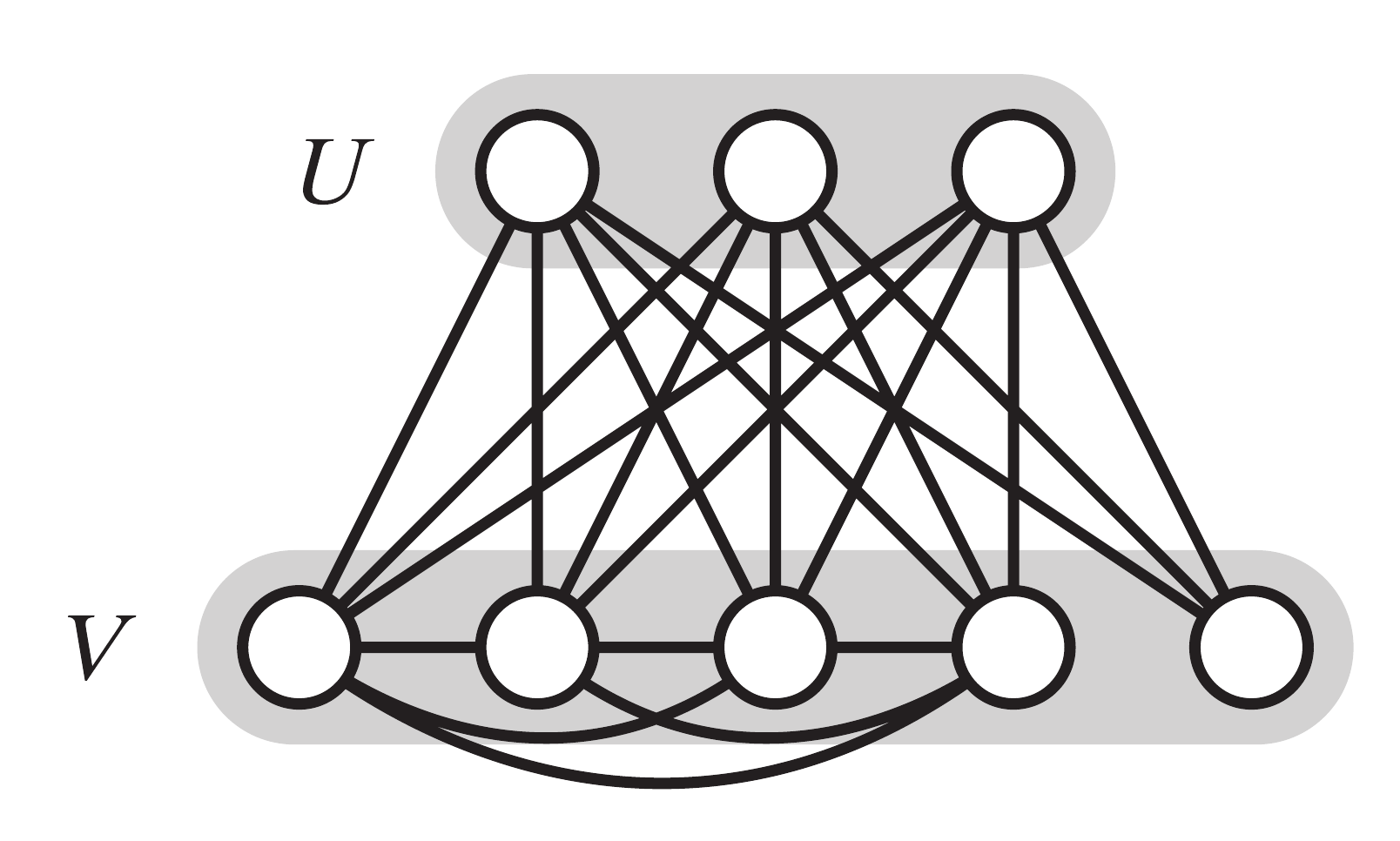}}\\
\caption{The graph $K(3,5)$.}
\label{fig:K35} 
\end{center}
\end{figure}

Note that $\kappa(K(s,t)) \leq \delta(K(s,t)) = s$. Further, note that adding edges to a graph cannot decrease its vertex connectivity, so $\kappa(K(s,t))\geq \kappa(K_{s,t})=s$. This gives $\kappa(K(s,t))=s$. By Theorem \ref{thm:thhkappa},
\begin{equation}\label{eqn:thhKst lowerbound}
\thh\left( K(s,t) \right) \geq \lceil 2\sqrt{s+t-s}+s-1 \rceil = \lceil 2\sqrt{t}+s-1 \rceil.
\end{equation}
We consider the cases where $s\geq 1$ and $s=0$ separately.

First, for any positive integer $s$, pick an integer $t$ such that $t\geq \lceil 2\sqrt{t}+s-1 \rceil$. That is, pick
\begin{alignat*}{3}
        &\qquad & t                 &\geq \lceil 2\sqrt{t}+s-1 \rceil   &\qquad \\
\iff    &       & t                 &\geq 2\sqrt{t}+s-1                 & &\text{since $t$ is an integer,}    \\
\iff    &       & t-2\sqrt{t}+1     &\geq s                             \\
\iff    &       & (\sqrt{t}-1)^2    &\geq s                             \\
\iff    &       & \sqrt{t}-1        &\geq \sqrt{s}                      & &\text{since $s$ and $t$ are positive integers,}\\
\iff    &       & \sqrt{t}          & \geq \sqrt{s}+1                   \\
\iff    &       & t                 & \geq (\sqrt{s}+1)^2               \\
\iff    &       & t                 & \geq s + 2\sqrt{s} + 1.
\end{alignat*}
We will show that no hopping forcing set of $K(s,t)$ realizes this lower bound for these $s$ and $t$.

Let $B$ be a hopping forcing set of $G=K(s,t)$. There are four cases to consider, based on whether $U$ and $V$ are subsets of $B$.

\emph{Case 1 ($U\nsubseteq B$ and $V\nsubseteq B$):} If so, then there are white vertices in both $U$ and $V$ at time $0$. Since each vertex in $U$ is adjacent to every vertex in $V$ and vice versa, every vertex in $B$ is adjacent to a white vertex and thus is dormant, contradicting $B$ being a hopping forcing set.

\emph{Case 2 ($U\subseteq B$ and $V\subseteq B$):} If so, then $|B|=n$ and $\thh(G;B)=n$. Note that for $s\geq 1$, we have $t\geq s+2\sqrt{s}+1\geq 4$. Then
\begin{alignat*}{3}
        &\qquad & t     &\geq 4         &\qquad \\
\iff    &       & t^2   &\geq 4t        &   &\text{since $t$ is positive,}    \\
\iff    &       & t     &\geq 2\sqrt{t} &   &\text{since $t^2$ and $4t$ are positive,} \\
\iff    &       & t     &> \lceil 2\sqrt{t}-1 \rceil    & &\text{since $2\sqrt{t}>\lceil 2\sqrt{t}-1 \rceil$ by definition of ceiling,}  \\
\iff    &       & s+t   &> \lceil 2\sqrt{t}-1 \rceil +s \\
\iff    &       & n     &> \lceil 2\sqrt{t}+s-1 \rceil.
\end{alignat*}
So, in this case, $\thh(G;B)=n$ is always greater than the lower bound from Equation \ref{eqn:thhKst lowerbound}.

\emph{Case 3 ($U\subseteq B$ but $V\nsubseteq B$):} Suppose as such, and that $B$ is a hopping forcing set such that there are at least two white vertices in $V$ at time $0$ (that is, $|B|\leq n-2$). First, note that every vertex in $U$ will be dormant until $V(G)$ is colored entirely blue, so only vertices in $V$ will possibly be able to force. If $v\in B$, then $v$ is active and can force one of the white vertices in $V'$ to be blue, but since at least one white vertex remains in $V'$ at time $1$, no other vertex is active. If $v\notin B$, then there is at least one white vertex in $V'$, implying that there are no active vertices in $G$. This contradicts $B$ being a hopping forcing set, so $|B|\geq n-1$. Since $V\nsubseteq B$, $|B|\neq n$ and so $|B|=n-1$.

There is thus only one white vertex at time $0$ in this case. If that white vertex is $v$, then it can be forced in the first time step by any vertex in $V'$; if that white vertex is in $V'$, then it can be forced in the first time step by $v$. Either way, we have $\thh(G;B)=n$. As in Case 2, this implies that $\thh(G;B)$ is greater than the lower bound in Equation \ref{eqn:thhKst lowerbound}.

\emph{Case 4 ($U\nsubseteq B$ but $V\subseteq B$):} First, observe that every vertex in $V$ will be dormant until $V(G)$ is colored entirely blue, so only vertices in $U$ will be able to force. As such, since $B$ must contain the $t$ vertices in $V$ but coloring only $V$ blue will not give any active vertices, $|B|\geq t+1$. In fact, the smallest possible hopping forcing set in this case has size $t+1$; by coloring $V$ plus one vertex in $U$ blue, we can force the rest of the vertices in $U$ one-by-one. Considering this, we must have that
\[
\thh(G;B)\geq t+1 > t \geq \lceil 2\sqrt{t}+s-1 \rceil.
\]
Thus, in this case, $\thh(G;B)$ is always strictly greater than the lower bound from Equation \ref{eqn:thhKst lowerbound}.

Altogether, in every case for $s\geq 1$, we have that $\thh(G;B)> \lceil 2\sqrt{t}+s-1 \rceil$, so we must also have that $\thh(G) > \lceil 2\sqrt{t}+s-1 \rceil$.

We now consider the case where $s=0$. If so, then notice that $G=K(0,t)=K_{t-1} \cup K_1$. Pick any $t\geq 4$. By the same logic from Case 3 above, the only hopping forcing sets of $G$ have at least $t-1$ vertices, so $\thh(G)=t$. Observe that
\begin{alignat*}{3}
        &\qquad & t     &\geq 4         &\qquad \\
\iff    &       & t^2   &\geq 4t        &   &\text{since $t$ is positive,}    \\
\iff    &       & t     &\geq 2\sqrt{t} &   &\text{since $t^2$ and $4t$ are positive,} \\
\iff    &       & t     &> \lceil 2\sqrt{t}-1 \rceil    & &\text{since $2\sqrt{t}>\lceil 2\sqrt{t}-1 \rceil$ by definition of ceiling,}  \\
\iff    &       & t     &> \lceil 2\sqrt{t} +s-1 \rceil & &\text{since $s=0$.}
\end{alignat*}
Thus, when $s=0$, $\thh(G)=t$ is strictly greater than the lower bound from Equation \ref{eqn:thhKst lowerbound}.

For any $\kappa\geq 0$, there are then infinitely many graphs that do not realize the bound from Theorem \ref{thm:thhkappa}, those being the graphs $K(s,t)$ where $s=\kappa$ and $t\geq \max\{4, s+2\sqrt{s}+1\}$.
\end{proof}


\subsection{Upper bounds}\label{sec:throtUpperBounds}

In this section, we provide three ways to bound the hopping throttling number from above. First, consider the hopping forcing set $B=V(G)$ for any graph $G$. Then, since $\thh(G;B)=|V(G)|$, we immediately get the following elementary bound on $\thh(G)$.

\begin{obs}\label{obs:thhleqn}
For any graph $G$, $\thh(G)\leq |V(G)|$.
\end{obs}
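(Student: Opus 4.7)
The plan is to exhibit an explicit hopping forcing set whose associated throttling value equals $|V(G)|$, and then invoke the definition of $\thh(G)$ as a minimum. The natural candidate, already foreshadowed by the text preceding the statement, is $B = V(G)$ itself.

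First I would verify that $B = V(G)$ is indeed a hopping forcing set of $G$. This is immediate: with $B = V(G)$ every vertex of $G$ is blue at time $0$, so the $\H$ final coloring of $B$ is trivially $V(G)$. Consequently no forces are needed at all, and the empty set of forces $\calf = \emptyset$ witnesses $\calf\up{0} = V(G)$, giving $\pth(G; \calf) = 0$ and hence $\pth(G; B) = 0$.

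Next I would compute $\thh(G; B)$ directly from the definition:
\[
\thh(G;B) = |B| + \pth(G;B) = |V(G)| + 0 = |V(G)|.
\]
Since $\thh(G)$ is defined as the minimum of $\thh(G; B')$ over all $B' \subseteq V(G)$, taking the minimum over a collection that contains $B = V(G)$ yields
\[
\thh(G) \leq \thh(G; V(G)) = |V(G)|,
\]
which is the desired bound.

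There is essentially no obstacle here; the only subtlety worth mentioning (briefly, for completeness) is making sure that the definition of $\pth(G;B)$ permits the value $0$ when $B = V(G)$, which it does since the empty set of forces vacuously satisfies the requirement $\bigcup_{i=0}^{0} \calf\up{i} = B = V(G)$. Everything else is routine bookkeeping of definitions, so the entire argument fits comfortably in an observation-style one-liner.
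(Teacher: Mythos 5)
Your proposal is correct and is exactly the paper's argument: the paper obtains the bound by taking $B=V(G)$, noting $\pth(G;B)=0$, and concluding $\thh(G)\leq\thh(G;V(G))=|V(G)|$. Your extra remark that the empty set of forces witnesses propagation time $0$ is a fine (if unneeded) bit of bookkeeping and does not change the approach.
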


The second bound generalizes the strategy used to give an upper bound for $\thh(K_{s,t})$ in the proof of Proposition \ref{prop:thhkappatight}. In that proof, we colored the vertices in the smaller of the partite sets blue, then optimized throttling on the remaining vertices as we would on an empty graph. To generalize this, we will consider a maximum independent set of a graph, which, as with the partite sets of $K_{s,t}$, induces an empty graph.

\begin{prop}\label{prop:thhalpha}
If $G$ is a graph on $n$ vertices with independence number $\alpha$, then
\[
\thh(G)\leq \lceil n-\alpha + 2\sqrt{\alpha} - 1\rceil.
\]
\end{prop}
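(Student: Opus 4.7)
The plan is to generalize the hopping strategy from Proposition \ref{prop:thhkappatight}, replacing the partite set of size $t$ in $K_{s,t}$ with a maximum independent set of $G$. First I would let $I \subseteq V(G)$ be a maximum independent set, so $|I| = \alpha$, and think of the remaining $n - \alpha$ vertices in $V(G) \setminus I$ as playing the role of $U$ in the $K_{s,t}$ proof. The hopping forcing set $B$ will consist of all of $V(G) \setminus I$ together with a carefully chosen subset $B' \subseteq I$.

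The critical observation is that for any vertex $u \in I$, we have $N_G(u) \subseteq V(G) \setminus I$ because $I$ is independent. So once $V(G) \setminus I$ is colored blue, every vertex $u \in I$ that becomes blue is immediately active, since all of its neighbors in $G$ are already blue. Moreover, the induced subgraph $G[I]$ is isomorphic to $\overline{K_\alpha}$, and since hopping allows an active vertex to force any white vertex (no adjacency required), any chronological list of hopping forces that works on $\overline{K_\alpha}$ with initial set $B'$ transfers verbatim to $G$ as a valid chronological list of hopping forces acting on $I$.

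I would therefore pick $B'$ to be a subset of $I$ that achieves the hopping throttling number of $\overline{K_\alpha}$, which equals $\lceil 2\sqrt{\alpha} - 1 \rceil$ by Proposition \ref{prop:thhempty}. Letting $B = (V(G) \setminus I) \cup B'$, one has $|B| = (n - \alpha) + |B'|$ and $\pth(G; B) \leq \pth(\overline{K_\alpha}; B')$, so
\[
\thh(G) \leq |B| + \pth(G;B) \leq (n - \alpha) + |B'| + \pth(\overline{K_\alpha}; B') = (n - \alpha) + \lceil 2\sqrt{\alpha} - 1 \rceil,
\]
which equals $\lceil n - \alpha + 2\sqrt{\alpha} - 1 \rceil$ since $n - \alpha$ is an integer.

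There is no serious obstacle here; the only point that deserves a sentence of care is verifying that forces performed within $I$ remain valid when embedded in $G$, and this is immediate from the two facts that (i) $I$ is independent, so neighbors of vertices in $I$ lie in $V(G) \setminus I$ and are already blue, and (ii) the hopping rule imposes no adjacency constraint between the forcing vertex and the forced vertex.
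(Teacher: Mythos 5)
Your proposal is correct and follows essentially the same route as the paper: color the complement of a maximum independent set $S$ blue, note that every vertex of $S$ then has all its neighbors blue so hopping forces within $S$ behave exactly as in $\overline{K_\alpha}$, and apply Proposition \ref{prop:thhempty} to throttle on $G[S]\iso\overline{K_\alpha}$, giving $\thh(G)\leq n-\alpha+\lceil 2\sqrt{\alpha}-1\rceil=\lceil n-\alpha+2\sqrt{\alpha}-1\rceil$. No gaps; the point you flag about forces transferring is precisely the justification the paper gives.
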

\begin{proof}
Let $S\subseteq V(G)$ be a maximum independent set of $G$; then, $|S|=\alpha$. Color all $n-\alpha$ vertices in $V(G)\setminus S$ blue. Since the remaining $\alpha$ vertices in $S$ form an independent set, no two of them neighbor each other. Thus, every vertex in $S$ neighbors only blue vertices from $V(G)\setminus S$. Coloring some vertices in $S$ blue, then, will allow us to force other vertices in $S$ to become blue by hopping.

It remains then to optimize throttling on the vertices in $S$. Since $S$ is an independent set, $G[S]$ is isomorphic to $\ol{K_\alpha}$. Thus, by Proposition \ref{prop:thhempty}, $\thh(G[S])=\lceil 2\sqrt{\alpha}-1 \rceil$.

Altogether, we have that $\thh(G)\leq n-\alpha + \lceil 2\sqrt{\alpha}-1 \rceil = \lceil n-\alpha + 2\sqrt{\alpha} - 1\rceil$ as desired.
\end{proof}

Note that this upper bound also holds for $\Zf$ throttling, as demonstrated in \cite[Proposition 5.5]{C19}, since $\Zf$ forcing allows us to use the hopping color change rule in addition to the standard one.

As with the lower bound in Theorem \ref{thm:thhkappa}, there are infinitely many graphs for which this upper bound is tight. In fact, the complete bipartite graphs $K_{s,t}$ serve as an example of tightness for both the $\kappa$ lower bound and this $\alpha$ upper bound. To show this, we first prove a lemma that demonstrates a case in which our lower and upper bounds are equal.

\begin{lem}\label{lem:k+a=n}
For a graph $G$ of order $n$, vertex connectivity $\kappa$, and independence number $\alpha$, if $\kappa+\alpha=n$, then
\[
\thh(G)= \lceil 2\sqrt{n-\kappa}+\kappa-1 \rceil = \lceil n-\alpha + 2\sqrt{\alpha} - 1\rceil.
\]
\end{lem}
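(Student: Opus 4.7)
The plan is to observe that the hypothesis $\kappa+\alpha=n$ makes the two ceiling expressions on the right-hand side algebraically identical, and then to sandwich $\thh(G)$ between them using results already established.

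First, I would note that $\kappa+\alpha=n$ is equivalent to $\alpha=n-\kappa$, so substituting $n-\kappa$ for $\alpha$ (or equivalently $n-\alpha$ for $\kappa$) gives
\[
\lceil 2\sqrt{n-\kappa}+\kappa-1 \rceil = \lceil 2\sqrt{\alpha}+(n-\alpha)-1 \rceil = \lceil n-\alpha+2\sqrt{\alpha}-1 \rceil.
\]
So the second equality in the statement is purely algebraic and requires no graph-theoretic work.

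For the first equality, I would apply the two bounds already proved in this section. By Theorem \ref{thm:thhkappa},
\[
\thh(G)\geq \lceil 2\sqrt{n-\kappa}+\kappa-1 \rceil,
\]
and by Proposition \ref{prop:thhalpha},
\[
\thh(G)\leq \lceil n-\alpha+2\sqrt{\alpha}-1 \rceil.
\]
Since the hypothesis forces these two bounds to coincide, $\thh(G)$ is squeezed to equal the common value, proving the lemma.

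There is really no obstacle here — the lemma is essentially an observation that whenever $\kappa+\alpha=n$, the independence-based upper bound of Proposition \ref{prop:thhalpha} and the connectivity-based lower bound of Theorem \ref{thm:thhkappa} meet. The only thing to be careful about is making the algebraic substitution transparent so the reader sees immediately that the two ceiling expressions are literally the same function of $n$ and $\alpha$ (equivalently, of $n$ and $\kappa$), rather than merely numerically equal in some incidental way.
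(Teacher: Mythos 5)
Your proposal is correct and matches the paper's own proof: substitute $\kappa = n-\alpha$ to see the two ceiling expressions coincide, then squeeze $\thh(G)$ between the lower bound of Theorem \ref{thm:thhkappa} and the upper bound of Proposition \ref{prop:thhalpha}. Nothing is missing.
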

\begin{proof}
We have that $\kappa=n-\alpha$. By Theorem \ref{thm:thhkappa},
\[
\thh(G) \geq \lceil 2\sqrt{n-\kappa}+\kappa-1 \rceil
        = \lceil 2\sqrt{n-(n-\alpha)}+n-\alpha-1 \rceil
        = \lceil n-\alpha + 2\sqrt{\alpha} - 1 \rceil.
\]
By Proposition \ref{prop:thhalpha}, $\thh(G)\leq \lceil n-\alpha + 2\sqrt{\alpha} - 1 \rceil$, so we have equality.
\end{proof}

Note that the converse of Lemma \ref{lem:k+a=n} does not always hold. The cross graph, illustrated below in Figure \ref{fig:crossgraph}, has $\kappa(G)=1$ and $\alpha(G)=4$, giving both bounds equal to $5$, but $\kappa(G)+\alpha(G)=5\neq 6=|V(G)|$.

\begin{figure}[ht] \begin{center}
\scalebox{.4}{\includegraphics{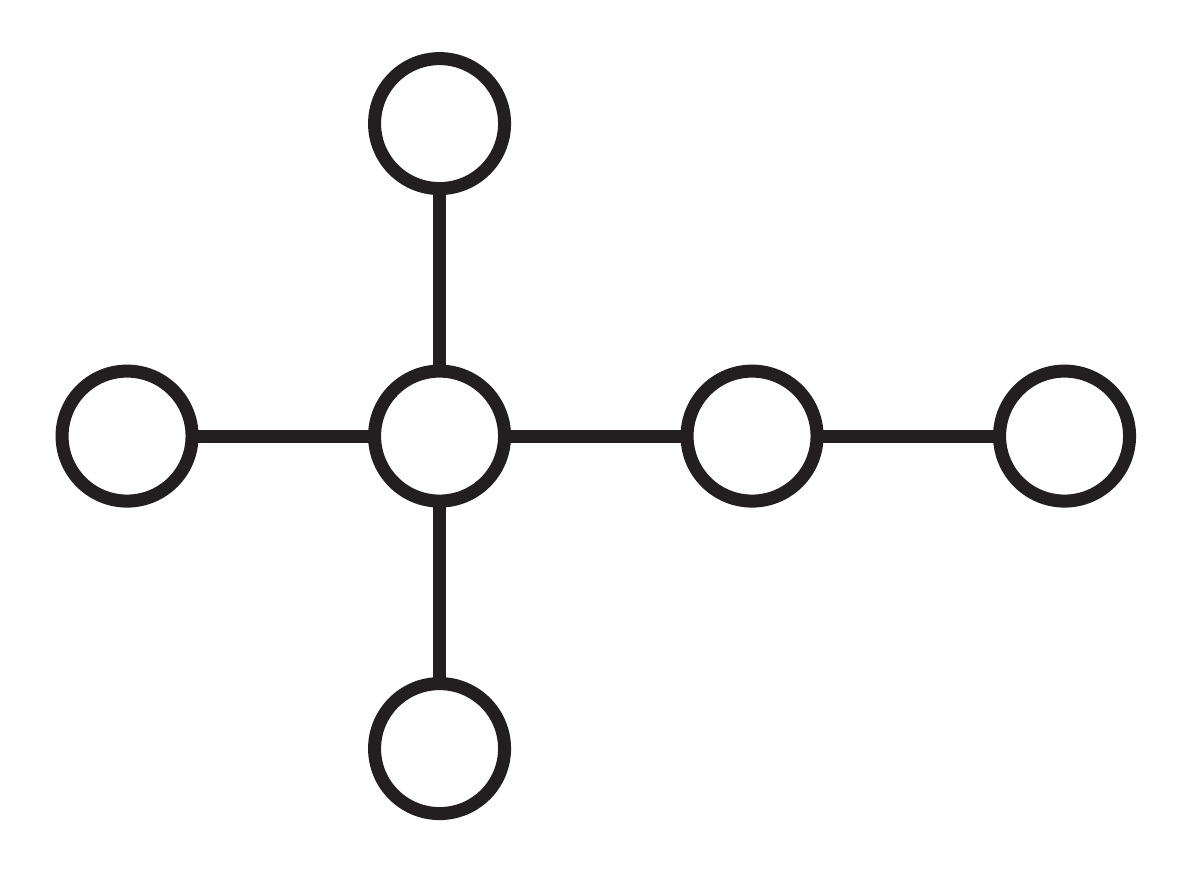}}\\
\caption{The cross graph.}
\label{fig:crossgraph} 
\end{center}
\end{figure}

Nevertheless, Lemma \ref{lem:k+a=n} lets us immediately show that $\thh(K_{s,t})$ equals the upper bound from Proposition \ref{prop:thhalpha}.

\begin{prop}\label{prop:thhalphatight}
The bound in Proposition \ref{prop:thhalpha} is tight for all $\alpha\geq 1$.
\end{prop}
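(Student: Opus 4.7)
The plan is to exhibit $K_{s,t}$ as the witness graphs, exactly as was done for the $\kappa$ lower bound in Proposition \ref{prop:thhkappatight}, and then invoke Lemma \ref{lem:k+a=n} to conclude automatically. The key observation is that in the complete bipartite graph $K_{s,t}$ with $s\leq t$, the larger partite set is a maximum independent set, so $\alpha(K_{s,t})=t$, while the vertex connectivity satisfies $\kappa(K_{s,t})=s$. In particular, $\kappa+\alpha=s+t=n$, which is precisely the hypothesis of Lemma \ref{lem:k+a=n}.

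Concretely, I would proceed as follows. Fix any integer $\alpha\geq 1$ and pick any $s$ with $0\leq s\leq \alpha$; set $t=\alpha$ and let $G=K_{s,t}$. First verify the three arithmetic facts: $n=s+t$, $\alpha(G)=t=\alpha$, and $\kappa(G)=s$ (both standard, the second because removing the smaller partite set disconnects $G$ while no smaller cut does). Then $\kappa+\alpha=n$, so Lemma \ref{lem:k+a=n} immediately yields
\[
\thh(G)=\lceil n-\alpha+2\sqrt{\alpha}-1\rceil,
\]
which is the desired equality with the upper bound of Proposition \ref{prop:thhalpha}. Since this holds for every choice of $s\in\{0,1,\dots,\alpha\}$, and in fact we already recorded in Proposition \ref{prop:thhkappatight} that $\thh(K_{s,t})=\lceil 2\sqrt{t}+s-1\rceil$, the claim needs only a one-line appeal to that proposition together with $n-\alpha=s$ and $\alpha=t$.

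There is no real obstacle here: the two bounds were designed to meet in the middle precisely via Lemma \ref{lem:k+a=n}, and the complete bipartite family was already shown to realize the lower bound. The only thing worth flagging is the edge case $\alpha=1$, which forces $s=t=1$ so that $G=K_{1,1}=K_2$; direct inspection gives $\thh(K_2)=2=\lceil 2-1+2\sqrt{1}-1\rceil$, matching the formula. Thus the bound is achieved for every $\alpha\geq 1$, completing the proof.
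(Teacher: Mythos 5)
Your proposal is correct and follows essentially the same route as the paper: take $K_{s,t}$ with $s\leq t$, note $\kappa=s$, $\alpha=t$, so $\kappa+\alpha=n$, and apply Lemma \ref{lem:k+a=n} (the paper does exactly this, without even needing the separate $\alpha=1$ discussion). The only minor slip is your remark that $\alpha=1$ ``forces $s=t=1$''; $s=0$ (i.e.\ $K_1$) is also allowed, but this does not affect the argument.
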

\begin{proof}
Consider the complete bipartite graph $K_{s,t}$ on $s+t$ vertices with $s\leq t$, $s\geq 0$, and $t\geq 1$. Note that $\kappa(K_{s,t})= \min\{s,t\}=s$ and $\alpha(K_{s,t}) = \max\{s,t\} = t$. Since
\[
\kappa(K_{s,t}) + \alpha(K_{s,t}) = s+t = |V(G)|,
\]
by Lemma \ref{lem:k+a=n}, we have that
$
\thh(K_{s,t}) = \lceil n-\alpha + 2\sqrt{\alpha} - 1\rceil,
$
so the bound is tight.
\end{proof}




We can see Proposition \ref{prop:thhalphatight} in action in Figure \ref{fig:k35forcing}. For instance, by coloring all but a maximum independent set of vertices (namely those in $V(G)\setminus V = U$) blue, optimizing throttling on the remaining vertices (those in $V$) realizes the hopping throttling number of $\thh(K_{3,5})=7$.







We now turn our attention to our third way to bound $\thh$ from above---by using a characterization of hopping throttling. Standard throttling was first characterized in \cite{C19}, involving particular graph operations performed on the Cartesian product $K_a \cart P_b$. Define the \emph{path edges} of $K_a \cart P_b$ to be the edges in each copy of $P_b$ in the Cartesian product, and similarly define the \emph{complete edges} of $K_a \cart P_b$ to be the edges in each copy of $K_a$ in the Cartesian product. We then have the following theorem characterizing $\thz$.

\begin{thm}[{\cite[Theorem 4.1]{C19}}]\label{thm:standardchar}
Given a graph $G$ and a positive integer $t$, $\thz(G) \leq t$ if and only if there exist integers $a\geq 1$ and $b\geq 0$ such that $a+b=t$ and $G$ can be obtained from $K_a \cart P_{b+1}$ by contracting path edges and deleting complete edges.
\end{thm}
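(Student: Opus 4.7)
The plan is to prove both directions of the characterization by constructing an explicit correspondence between throttling-optimal zero forcing processes on $G$ and ways to obtain $G$ from $K_a \cart P_{b+1}$ via path-contractions and complete-edge deletions. The key bookkeeping device is a \emph{position-block} assignment: given a chronological list of forces with $a$ chains of lengths $\ell_1, \dots, \ell_a$ bounded by $b+1$, assign to the $k$th vertex $v_{i,k}$ of chain $i$ (forced at time $t_{i,k}$, with $t_{i,1}=0$) the interval of integer positions $P_{i,k} = [t_{i,k}+1,\, t_{i,k+1}]$ inside $\{1,\dots,b+1\}$, using the convention $t_{i,\ell_i+1} := b+1$. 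These intervals partition $\{1,\dots,b+1\}$ into consecutive blocks and will play the role of the vertex sets merged by row-wise path contraction in $K_a \cart P_{b+1}$.

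For the forward direction, I start with a zero forcing set $B$ realizing $|B| + \ptz(G;B) \le t$ and pad if necessary so that $a := |B| \ge 1$ and $b := t - a \ge \ptz(G;B)$. Contracting all path edges inside each block $P_{i,k}$ in $K_a \cart P_{b+1}$ reshapes row $i$ into the chain $C_i$, and the surviving cross edges run between pairs $(v_{i,k},v_{i',k'})$ whose blocks intersect. The critical lemma is that every edge $v_{i,k}v_{i',k'}$ of $G$ with $i \ne i'$ satisfies $P_{i,k} \cap P_{i',k'} \ne \emptyset$, so that $G$ is recovered by deleting only cross edges. This follows from the standard color change rule: if $v_{i,k}$ is non-terminal, then the instant before it forces $v_{i,k+1}$, every other neighbor of $v_{i,k}$ must already be blue, forcing $t_{i',k'} < t_{i,k+1}$; combining this with the symmetric inequality from chain $i'$ and the fact that terminal blocks always extend to position $b+1$ guarantees that the intervals overlap. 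The same timing argument rules out edges between non-consecutive vertices of a single chain, matching the row-internal edges of the constructed graph with those of $G$.

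For the backward direction, suppose $G$ arises from $K_a \cart P_{b+1}$ by the allowed operations with $a + b = t$, and let $v_{i,k}$ and $P_{i,k}$ denote the resulting row vertices and their induced position blocks. Take $B = \{v_{i,1} : i \in [a]\}$ and $\calf = \{v_{i,k-1} \to v_{i,k} : i \in [a],\ 2 \le k \le \ell_i\}$; I claim each such force becomes valid by time $\tau(v_{i,k}) := \min P_{i,k} - 1 \le b$. The verification that $v_{i,k}$ is the unique white neighbor of $v_{i,k-1}$ at time $\tau(v_{i,k})-1$ reduces to checking that any surviving cross neighbor $v_{i',j}$ of $v_{i,k-1}$ is blue by that point: the presence of the cross edge requires $P_{i,k-1} \cap P_{i',j} \ne \emptyset$, and since $P_{i,k-1}$ ends at $\min P_{i,k}-1$, this intersection forces $\min P_{i',j} \le \min P_{i,k}-1$ and hence $\tau(v_{i',j}) < \tau(v_{i,k})$. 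This yields $|B| + \ptz(G;B) \le a + b = t$. The main obstacle I anticipate is the forward direction's verification that cross-chain edges correspond to overlapping blocks, since it hinges on careful use of uniqueness-of-white-neighbor at the exact moment a force occurs; once the position-block formalism is in place, the backward direction is essentially the same argument read in reverse.
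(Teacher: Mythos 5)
Your proposal is correct and takes essentially the same route as the source of this result: the present paper does not reprove the theorem but quotes it from \cite{C19}, and the proof there is exactly this correspondence between the forcing chains of a throttling-optimal set, their timing intervals, and the blocks of $K_a \cart P_{b+1}$ merged by contracting path edges, with the leftover complete edges deleted. Your cross-chain overlap lemma and the observation that standard forcing chains are induced paths are precisely the two verifications that argument hinges on, and your reverse direction (first-column blocks as the forcing set, forcing along rows at times given by block minima) matches as well.
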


In standard zero forcing, forcing chains are always paths. This provides the intuition behind the choice of $K_a \cart P_{b+1}$ as the graph of concern in the above theorem, as each forcing chain will induce a path. In hopping forcing, however, every hopping forcing chain is an independent set of vertices, since a vertex cannot force its neighbors by hopping. As such, the Cartesian product we want to consider for the characterization theorem does not involve paths on $b+1$ vertices, but instead empty graphs on $b+1$ vertices, which are isomorphic to independent sets. Specifically, we will consider $K_a \cart \ol{K_{b+1}}$.

We also need to specify the corresponding graph operations in the characterization theorem for hopping throttling. Suppose $G=K_a \cart \ol{K_{b+1}}$ is drawn so that $V(G)$ is arranged in an $a$ by $b+1$ array where each column induces a $K_a$ and each row induces a $\ol{K_{b+1}}$. Then, as in the characterization theorem for standard throttling, deleting complete edges (the vertical edges in each copy of $K_a$) remains a valid operation. However, we can no longer contract path edges, as there are no such ``path edges'' in $G$. Instead, define an \emph{empty pair} to be two vertices in the same row of $G$ and in adjacent columns of $G$, that is, two vertices in the same copy of $\ol{K_{b+1}}$ and in adjacent copies of $K_a$. Then, contracting a path edge in $K_a \cart P_{b+1}$ corresponds exactly to identifying the vertices in an empty pair in $G$; we refer to this operation as \emph{identifying an empty pair}. This gives us the analogous characterization theorem for hopping throttling.

\begin{thm}\label{thm:hoppingchar}
Given a graph $G$ and a positive integer $t$, $\thh(G)\leq t$ if and only if there exist integers $a\geq 1$ and $b\geq 0$ such that $a+b=t$ and $G$ can be obtained from $K_a \cart \ol{K_{b+1}}$ by identifying empty pairs and deleting complete edges. 
\end{thm}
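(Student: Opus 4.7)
The plan is to mirror the proof of Theorem \ref{thm:standardchar}, swapping $P_{b+1}$ for $\ol{K_{b+1}}$ (because hopping forcing chains are independent sets rather than paths) and path-edge contractions for empty-pair identifications. Both directions rely on the natural correspondence between an $a$-chain propagation of length at most $b$ and a labelling of $V(K_a\cart\ol{K_{b+1}})$ as $(i,s)$ with $1\leq i\leq a$ indexing the chain and $0\leq s\leq b$ indexing the time step in which that chain's current vertex became blue.

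For the forward direction, given a hopping forcing set $B$ with $|B|=a$ and a set of forces $\mathcal{F}$ with $\pth(G;\mathcal{F})\leq b:=t-a$, let chain $i$ be $v_{i,1},\ldots,v_{i,k_i}$ with $v_{i,j}$ blue at time $\tau_{i,j}$ (and $\tau_{i,1}=0$). I would identify the block $\{(i,s):\tau_{i,j}\leq s<\tau_{i,j+1}\}$ (with $\tau_{i,k_i+1}:=b+1$) into a single vertex named $v_{i,j}$; each such block identification is achieved by a sequence of empty-pair identifications of consecutive columns. The resulting graph is then trimmed by complete-edge deletions to produce $G$. The nontrivial verification is that if $v_{i,j}$ and $v_{i',j'}$ (with $i\neq i'$) are adjacent in $G$, their column intervals overlap, so a complete edge already witnesses the adjacency. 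For this, note that if $v_{i,j}$ forces $v_{i,j+1}$ at time $\tau_{i,j+1}$, then every neighbor of $v_{i,j}$ in $G$ is blue at the start of that step, forcing $\tau_{i',j'}<\tau_{i,j+1}$; the inequality is strict because a simultaneous force of $v_{i',j'}$ in that step would leave $v_{i',j'}$ white at the start, violating the hopping rule. The symmetric inequality $\tau_{i,j}<\tau_{i',j'+1}$ follows the same way, and together they give overlap. Adjacencies within a row pose no issue, since hopping chains are independent sets, matching the non-adjacency within each copy of $\ol{K_{b+1}}$.

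For the backward direction, suppose $G$ arises from $K_a\cart\ol{K_{b+1}}$ with $a+b=t$ via the allowed operations. Each vertex of $G$ is a block of consecutive columns in some row $i$; list these blocks in left-to-right order as $u_{i,1},\ldots,u_{i,m_i}$, with $u_{i,j}$ occupying columns $[c_{i,j},c_{i,j+1}-1]$. Take $B=\{u_{i,1}:1\leq i\leq a\}$ as the initial blue set, and perform the forces $u_{i,j-1}\overset{\H}{\rightarrow}u_{i,j}$ in time step $c_{i,j}$ for each $j\geq 2$. The neighbors of $u_{i,j-1}$ in $G$ lie in other rows and share some column with $u_{i,j-1}$; each such neighboring block starts at a column at most $c_{i,j}-1$ and is therefore blue by the end of step $c_{i,j}-1$, so the hopping rule is satisfied at each force (complete-edge deletions only remove neighbors, which helps). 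Hence $\thh(G)\leq a+b=t$.

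The main obstacle is the column-overlap verification in the forward direction, specifically ruling out the tie $\tau_{i',j'}=\tau_{i,j+1}$. This is where hopping differs essentially from the standard rule: the requirement that \emph{every} neighbor of the forcing vertex be blue at the start of the step forbids a would-be simultaneous force of a neighbor. Without this strict inequality, adjacent chain vertices could have disjoint column intervals, and the identified graph would fail to capture the required edge, breaking the construction.
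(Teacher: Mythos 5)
Your proof is correct and follows essentially the same route as the paper, which simply invokes the proof of Theorem \ref{thm:standardchar} from \cite{C19} with hopping chains as rows, time steps as columns, and empty-pair identifications in place of path-edge contractions; your explicit column-interval overlap argument, including the strict inequality $\tau_{i',j'}<\tau_{i,j+1}$ forced by the requirement that all neighbors of a forcing vertex be blue at the start of its step, is precisely the substitution the paper has in mind. No gaps to report.
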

\begin{proof}
Observe that by considering hopping throttling instead of standard throttling and the graph $K_a \cart \ol{K_{b+1}}$ instead of $K_a \cart P_{b+1}$, the graph operations in this theorem are exactly analogous to the operations in the standard throttling case in Theorem \ref{thm:standardchar}. As such, the proof for this theorem is identical to that provided for Theorem \ref{thm:standardchar} in \cite{C19}, except standard zero forcing parameters are replaced with their hopping forcing equivalents, $K_a \cart P_{b+1}$ is replaced with $K_a \cart \ol{K_{b+1}}$, and the operation of contracting path edges is replaced with identifying empty pairs.
\end{proof}

It can sometimes be difficult to directly apply Theorem \ref{thm:hoppingchar} to a specific graph without calculating its throttling number beforehand in the first place. However, the intuition behind the characterization theorem is very useful. Carlson briefly mentions in \cite{C19} how the snaking strategy used to find the standard and $\Zf$ throttling numbers of paths and cycles is secretly their respective characterization theorems in disguise, and it is no different for snaking under hopping throttling. For example, Figure \ref{fig:p15charthm} illustrates how we can transform $K_4 \cart \ol{K_{5}}$ into the snaked $P_{15}$ we saw in Figure \ref{fig:p15snaking}. By deleting the grey complete edges and identifying the pairs of vertices labeled with the same number, we can then squish the resulting path into the $3\times 5$ box we saw earlier.

\begin{figure}[ht] \begin{center}
\scalebox{.26}{\includegraphics{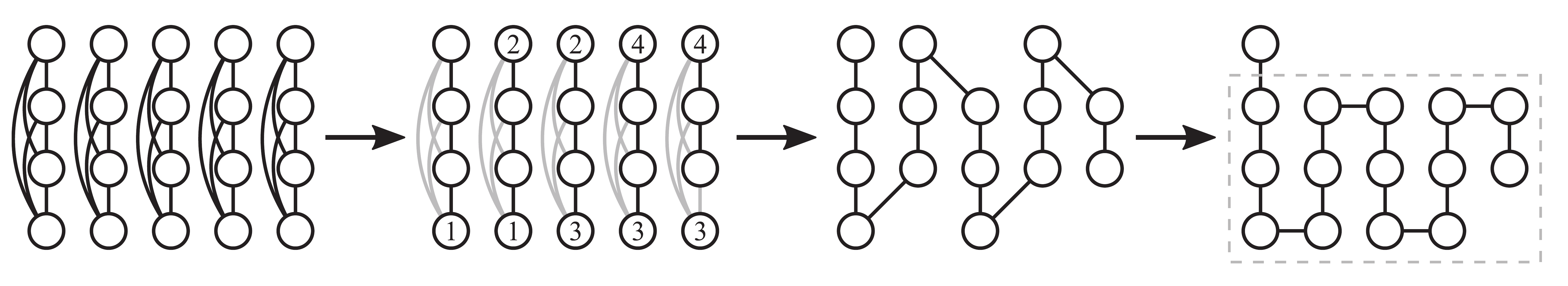}}\\
\caption{Obtaining $P_{15}$ from $K_4 \cart \ol{K_{5}}$ by identifying empty pairs and deleting complete edges.}
\label{fig:p15charthm} 
\end{center}
\end{figure}

\subsection{Extreme hopping throttling}\label{sec:throtExtreme}

We now investigate graphs with extreme hopping throttling numbers. First, we determine all graphs with $\thh(G)\in \{ 1,2,3,4 \}$ by making use, in part, of our hopping throttling characterization theorem. Afterwards, we show how considering sets of forbidden subgraphs lets us classify graphs with $\thh(G) = |V(G)| - k$.

\subsubsection{Low hopping throttling number}

We begin with a simple observation. The only way that $\thh(G)=1$ is if there is a hopping forcing set $B$ of size $1$ that forces the entire graph to be blue in $0$ time steps; this necessitates $G$ being a single vertex.

\begin{obs}
A graph $G$ satisfies $\thh(G)=1$ if and only if $G\iso K_1$.
\end{obs}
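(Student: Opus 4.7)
The plan is to verify both directions separately, both of which should follow directly from unpacking the definitions of $\pth(G;B)$ and $\thh(G;B) = |B| + \pth(G;B)$.

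For the forward direction, suppose $\thh(G) = 1$. Since $G$ has at least one vertex, any hopping forcing set $B$ must satisfy $|B| \geq 1$ (the empty set is not a hopping forcing set of a non-empty graph, because no force can ever be performed), and by definition $\pth(G;B) \geq 0$. Hence $\thh(G;B) \geq 1$ for every hopping forcing set $B$. If some $B$ achieves the value $1$, we must have $|B| = 1$ and $\pth(G;B) = 0$. The condition $\pth(G;B) = 0$ says that the union $\bigcup_{i=0}^{0} \calf\up{i} = B$ already equals $V(G)$ for some set of forces $\calf$ of $B$, so $V(G) = B$ has exactly one vertex, forcing $G \iso K_1$.

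For the reverse direction, if $G \iso K_1$, take $B = V(G)$, which is trivially a hopping forcing set with $\pth(G;B) = 0$ and $|B| = 1$, so $\thh(G) \leq 1$. Combined with $\thh(G) \geq 1$ from the previous paragraph, we get $\thh(G) = 1$.

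There is no real obstacle here; the only care needed is to be clear that $\pth(G;B) = 0$ means $B$ itself is already the final coloring (i.e., no forces occur), so that $B = V(G)$. The statement can be presented as a single short paragraph citing the definitions of hopping forcing set, propagation time, and throttling number.
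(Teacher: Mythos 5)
Your proof is correct and matches the paper's reasoning exactly: the paper also observes that $\thh(G)=1$ forces a hopping forcing set of size $1$ with propagation time $0$, which means the single blue vertex is already all of $V(G)$, so $G\iso K_1$. Your added care about the empty set and the trivial converse is fine but does not change the argument.
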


A similar consideration of possible sizes of hopping forcing sets and their resulting propagation times lets us determine all graphs with hopping throttling number $2$.

\begin{prop}
For any graph $G$ with $|V(G)|\geq 2$, $\thh(G)=2$ if and only if $G\iso K_2$ or $G\iso \ol{K_2}$.
\end{prop}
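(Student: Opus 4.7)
The plan is to prove both directions, with the reverse direction being a direct computation and the forward direction a short case analysis on how $\thh(G) = 2$ can decompose as $|B| + \pth(G;B)$. For the easy direction, Proposition \ref{prop:hkn} gives $\H(K_2) = 2$, so the set $V(K_2)$ realizes $\thh(K_2) \leq 2$, and the matching lower bound follows from Observation \ref{obs:Hdelta} (or from $|V(K_2)|=2$ forcing $\H(K_2) = 2$). For $\ol{K_2}$, Proposition \ref{prop:thhempty} with $n=2$ gives $\thh(\ol{K_2}) = \lceil 2\sqrt{2} - 1 \rceil = 2$ directly.

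For the forward direction, suppose $\thh(G) = 2$ with $|V(G)| \geq 2$, and let $B$ be a hopping forcing set realizing $|B| + \pth(G;B) = 2$. Since $B$ must be nonempty in order to force and $\pth(G;B) \geq 0$, the only possibilities are $(|B|, \pth(G;B)) \in \{(2,0), (1,1)\}$. In the $(2,0)$ case, $\pth(G;B) = 0$ means $B = V(G)$ already colors every vertex blue, so $|V(G)| = 2$. In the $(1,1)$ case, $B$ consists of a single vertex, and after one time step every vertex of $G$ must be blue; since a blue vertex can perform at most one force (and becomes extinct immediately after doing so), starting from one blue vertex yields at most two blue vertices after one time step, so $|V(G)| \leq 2$, and the hypothesis $|V(G)| \geq 2$ again forces $|V(G)| = 2$.

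In either case $|V(G)| = 2$, and the only simple graphs on two vertices are $K_2$ and $\ol{K_2}$, completing the characterization. The only subtle point in the argument is the $(1,1)$ case, where one must explicitly invoke the fact that each blue vertex performs at most one hopping force per time step; this is built into the hopping rule (an active vertex becomes extinct after forcing) and is the same observation used in the proof of Theorem \ref{thm:thhkappa}, so no new machinery is required. Everything else is immediate bookkeeping.
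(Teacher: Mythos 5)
Your proof is correct and follows essentially the same route as the paper: verify the two small graphs directly (via Propositions \ref{prop:thhempty} and \ref{prop:hkn}/\ref{prop:thhpath}-type computations) and then split $\thh(G)=2$ into the cases $(|B|,\pth(G;B))=(2,0)$ and $(1,1)$, using that a single blue vertex can force at most once per time step. The only cosmetic difference is that in the $(1,1)$ case you stop at $|V(G)|=2$ rather than pinning down $G\iso\ol{K_2}$ as the paper does, which is perfectly adequate since both two-vertex graphs were already handled in the converse direction.
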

\begin{proof}
First, observe that if $G\iso K_2$, then by Proposition \ref{prop:thhpath}, $\thh(G) = \lceill 2\sqrt{2-1} \rceill = 2$. If $G\iso\ol{K_2}$, then the only hopping forcing sets are either one or both vertices in $G$; in either case, $\thh(G;B)=2$.

Now, suppose that $\thh(G;B)=2$ for some hopping forcing set $B$. Then, $|B|\in \{1,2\}$. If $|B|=1$, the propagation time is $1$. So, $v\notin B$ implies $v\notin N(B)$; further, at most one vertex can be forced in the first time step. Together, this gives $G\iso\ol{K_2}$. If $|B|=2$, the propagation time is $0$, so we must have $|V(G)|=2$ and either $G\iso K_2$ or $G\iso\ol{K_2}$.
\end{proof}

Continuing on, we can use Theorem \ref{thm:hoppingchar}, the characterization theorem, to help find every graph with a hopping throttling number of either $3$ or $4$ via a brute force algorithm. Let $t\in\{3,4\}$. For each pair of integers $a\geq 1$ and $b\geq 0$ with $a+b=t$, we start by creating the graph $G' = K_a \cart \ol{K_{b+1}}$. Next, we make a list of every possible combination of a subset of empty pairs of $G'$ and a subset of complete edges of $G'$. For each combination, we delete the subset of complete edges and identify each empty pair in the subset, giving us a graph $G$ with $\thh(G)\leq t$. If $G$ is not isomorphic to a graph we previously generated, we then add it to a running list of graphs with $\thh(G)\leq t$. Finally, we can remove any graph which also has $\thh(G)\leq t-1$ to get a list of graphs with hopping throttling number $t$.

Implementing this using Mathematica \cite{github}, we get $7$ graphs with $\thh(G)=3$ and $35$ graphs with $\thh(G)=4$. 

\subsubsection{High hopping throttling number}

We now turn to graphs with hopping throttling numbers close to $|V(G)|$. Specifically, we will consider how we can classify graphs with hopping throttling numbers $\thh(G)=|V(G)|-k$ for any integer $0\leq k\leq |V(G)|$. Similar classifications have already been obtained for other types of throttling, namely positive semidefinite throttling and standard throttling (see \cite{CHK19} and \cite{CK20} respectively). Even more noteworthy, this was done by characterizing throttling numbers in terms of forbidden subgraphs. In this section, we proceed similarly, providing explicit sets of forbidden subgraphs to classify graphs with $\thh(G)=|V(G)|$ and graphs with $\thh(G)=|V(G)|-1$, as well as providing a method to theoretically determine forbidden subgraphs for every graph with $\thh(G)=|V(G)|-k$.

To begin, we show that three forbidden subgraphs can be used to characterize graphs with $\thh(G)=|V(G)|$.

\begin{thm}
For any graph $G$, $\thh(G)=|V(G)|$ if and only if $G$ does not contain $2K_2$, $K_2\cup \ol{K_2}$, or $\ol{K_4}$ as an induced subgraph.
\end{thm}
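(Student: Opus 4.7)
The plan is to prove both directions by identifying the existence of an induced forbidden subgraph with the existence of two vertices that can be forced simultaneously by hopping. Observe that any four vertices $v_1, v_2, u_1, u_2$ with no edges between $\{v_1,v_2\}$ and $\{u_1,u_2\}$ induce one of $2K_2$, $K_2 \cup \ol{K_2}$, or $\ol{K_4}$, with the case determined by whether the edges $v_1v_2$ and $u_1u_2$ are present. So the key bijection is: forbidden subgraph $\iff$ four vertices arranged as a ``non-edge bipartition''.

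For the reverse direction, I would suppose such a forbidden configuration is present, giving distinct vertices $v_1, v_2, u_1, u_2$ with $N(v_i) \cap \{u_1, u_2\} = \emptyset$ for $i=1,2$. Set $B = V(G) \setminus \{u_1, u_2\}$. Since $N(v_1) \subseteq B$ and $N(v_2) \subseteq B$, both $v_1$ and $v_2$ are active at time $0$, so the forces $v_1 \to u_1$ and $v_2 \to u_2$ occur in the first time step. This yields $\pth(G;B) = 1$, so $\thh(G) \le (n-2) + 1 = n - 1 < n$.

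For the forward direction (by contrapositive), suppose $\thh(G) < n$. Then there is a hopping forcing set $B$ with an associated set of forces $\calf$ such that $|B| + \pth(G;\calf) \le n - 1$. The total number of forces in $\calf$ is $n - |B|$, while the number of time steps is $\pth(G;\calf) \le n - 1 - |B| < n - |B|$. By the pigeonhole principle, at least one time step $t$ must contain two or more forces. Pick distinct forces $v_1 \overset{\H}{\rightarrow} u_1$ and $v_2 \overset{\H}{\rightarrow} u_2$ occurring at time $t$. Since each $v_i$ performs a hopping force at time $t$, every neighbor of $v_i$ is blue strictly before time $t$. In particular, $u_1$ and $u_2$, which are white just before time $t$, cannot lie in $N(v_1) \cup N(v_2)$. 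Hence the four distinct vertices $v_1, v_2, u_1, u_2$ have no edges between $\{v_1, v_2\}$ and $\{u_1, u_2\}$, so the subgraph they induce is one of $2K_2$, $K_2 \cup \ol{K_2}$, or $\ol{K_4}$.

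The main obstacle is the forward direction: one needs to recognize that $\thh(G) < n$ forces some time step to contain at least two simultaneous hopping forces. This follows cleanly from a pigeonhole comparison of the total number of forces ($n - |B|$) with the propagation time ($\le n - 1 - |B|$), and once two simultaneous forces are in hand, the hopping rule immediately delivers the desired non-edge structure. A small care point is verifying that the four vertices $v_1, v_2, u_1, u_2$ are indeed distinct, which is automatic since two distinct forces must involve distinct forcers and distinct forcees, and no $v_i$ can equal any $u_j$ (the $v_i$ are blue while the $u_j$ are white just before time $t$).
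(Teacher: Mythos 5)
Your proposal is correct and follows essentially the same argument as the paper: both directions reduce to the equivalence between two simultaneous hopping forces and four vertices with no edges between the two pairs, using $B=V(G)\setminus\{u_1,u_2\}$ for one direction and two forces in a common time step for the other. Your explicit pigeonhole count of forces versus time steps simply spells out a step the paper leaves implicit, and your case observation matches the paper's case analysis on the edges $v_1v_2$ and $u_1u_2$.
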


\begin{figure}[ht] \begin{center}
\scalebox{.4}{\includegraphics{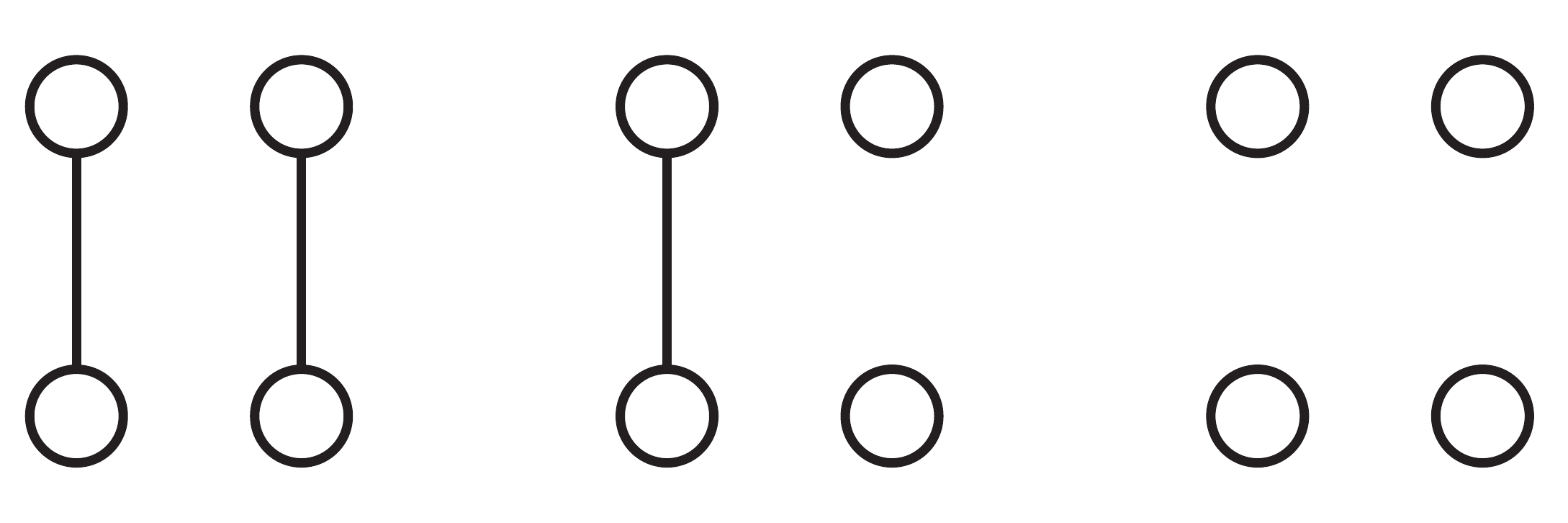}}\\
\caption{The graphs $2K_2$, $K_2\cup \ol{K_2}$, and $\ol{K_4}$ from left to right.}
\label{fig:g0subgraphs} 
\end{center}
\end{figure}

\begin{proof}
We prove both directions by contrapositive.

By Observation \ref{obs:thhleqn}, $\thh(G)\leq |V(G)|$ for any graph $G$. Let $B$ be a hopping forcing set such that $\thh(G;B)=\thh(G)$, and let $\F$ be a set of hopping forces of $B$ such that $\pth(G;\F)=\pth(G;B)$. Suppose that $\thh(G)< |V(G)|$. Then, there must exist a time $t$ such that $|\F^{(t)}| \geq 2$. Let $u,v\in \F^{(t)}$ and $x,y\in U_\F^{(t)}$ such that $x\rightarrow u$ and $y\rightarrow v$ in time step $t$. By the hopping color change rule, this implies that $xu,yv,xv,yu \notin E(G)$.

Consider the edges $xy$ and $uv$. First, assume $xy\in E(G)$. If $uv\in E(G)$, then $S=\{u,v,x,y\}$ induces a $2K_2$; if $uv\notin E(G)$, then $S$ induces a $K_2\cup \ol{K_2}$. Now, assume $xy\notin E(G)$. If $uv\in E(G)$, then $S$ induces a $K_2\cup \ol{K_2}$; if $uv\notin E(G)$, then $S$ induces a $\ol{K_4}$. In all cases, if $\thh(G)< |V(G)|$, then $G$ contains an induced $2K_2$, $K_2\cup \ol{K_2}$, or $\ol{K_4}$.

For the converse, suppose that $G$ contains $2K_2$, $K_2\cup \ol{K_2}$, or $\ol{K_4}$ as an induced subgraph. In any case, there exist four vertices $u,v,x,y\in V(G)$ such that $xu,yv,xv,yu\notin E(G)$. Let $B=V(G) \setminus \{ u,v \}$; $x$ and $y$ are then active, since both are blue and not adjacent to the only vertices colored white in $G$, namely $u$ and $v$. As such, in the first time step of the hopping forcing process, $x$ and $y$ can force $u$ and $v$ respectively to color the entire graph blue. This gives $\thh(G)\leq |B|+\pth(G;B)=|V(G)|-1$.
\end{proof}

We now know that by considering a set of three forbidden subgraphs, we can determine whether or not a graph $G$ has hopping throttling number $|V(G)|$. We will now generalize this to describe sets of forbidden subgraphs for $\thh(G)\geq |V(G)|-k$. As mentioned before, Carlson and Kritschgau did just this for standard throttling in \cite{CK20}, and the method described here to find these forbidden subgraphs for hopping throttling is very similar. To begin, consider the following definition; recall that a \emph{matching} is a set of edges such that no two share a common vertex.

\begin{defn}[{\cite[Definition 4.8]{CK20}}]
A graph $G$ is an \emph{$a$-accelerator} for integer $a\geq 1$ if $V(G)$ can be partitioned into sets $S$ and $T$, each of size $a+1$, such that there exists a matching between $S$ and $T$, and the only edges between $S$ and $T$ are in this matching.
\end{defn}

Observe then that if $G$ is an $a$-accelerator, by coloring $S$ blue, we can force $T$ to be blue in one time step. This gives $\thh(G)\leq |S|+1=k+2=|V(G)|-k$. To ensure the same result under the hopping color change rule, we must ensure that no edges connect a blue vertex in $S$ to a white vertex in $T$. This motivates the following definition.

\begin{defn}
A graph $G$ is a \emph{$k$-kangaroo} for integer $k\geq 1$ if $V(G)$ can be partitioned into sets $S$ and $T$, each of size $k+1$, such that no edges go between $S$ and $T$.
\end{defn}

Thus, just as with $a$-accelerators, if $G$ is a $k$-kangaroo, then by coloring $S$ blue, $T$ can be forced in one time step, giving $\thh(G)\leq |V(G)|-k$. We can generalize this idea to hopping forcing processes with multiple time steps by gluing $k$-kangaroos together.

\begin{defn}
A graph $G$ is a \emph{$(k_1,\ldots,k_r)$-kangaroo} for positive integers $k_1,\ldots,k_r$ if $V(G)$ can be written as $\bigcup_{i=1}^r (S_i \cup T_i)$, where $S_i$ and $T_i$ are sets of $k_i$ vertices for $1\leq i \leq r$, such that
\begin{enumerate}
    \item $\{S_i \}_{i=1}^r$ is a set of disjoint sets,
    \item $\{T_i \}_{i=1}^r$ is a set of disjoint sets, and
    \item $S_i \cap T_j$ is empty whenever $i \leq j$.
\end{enumerate}
Furthermore, the edges of $G$ must be partitioned by $S_i,T_i$ for $1\leq i\leq r$ such that
\begin{enumerate}[resume]
    \item no edges go between $S_i$ and $T_j$ for $i\leq j$, and
    \item $S_i$ is dominated by $T_{i-1}$ for $2\leq i\leq r$.
\end{enumerate}
\end{defn}

Note that property 1 ensures no vertex forces twice, property 2 ensures no vertex is forced twice, and property 3 ensures no vertex can force before it itself is forced. Property 4 ensures that the vertices in each $S_i$ are not adjacent to any white vertices at time $i-1$, giving us that $G[S_i \cup T_i]$ is a $k_i$-kangaroo for every $i$. Further, by property 5, $B^{(i)}=T_i$. Figure \ref{fig:kangarooex} gives an example of a $(2,3,1)$-kangaroo.

\begin{figure}[ht] \begin{center}
\scalebox{.35}{\includegraphics{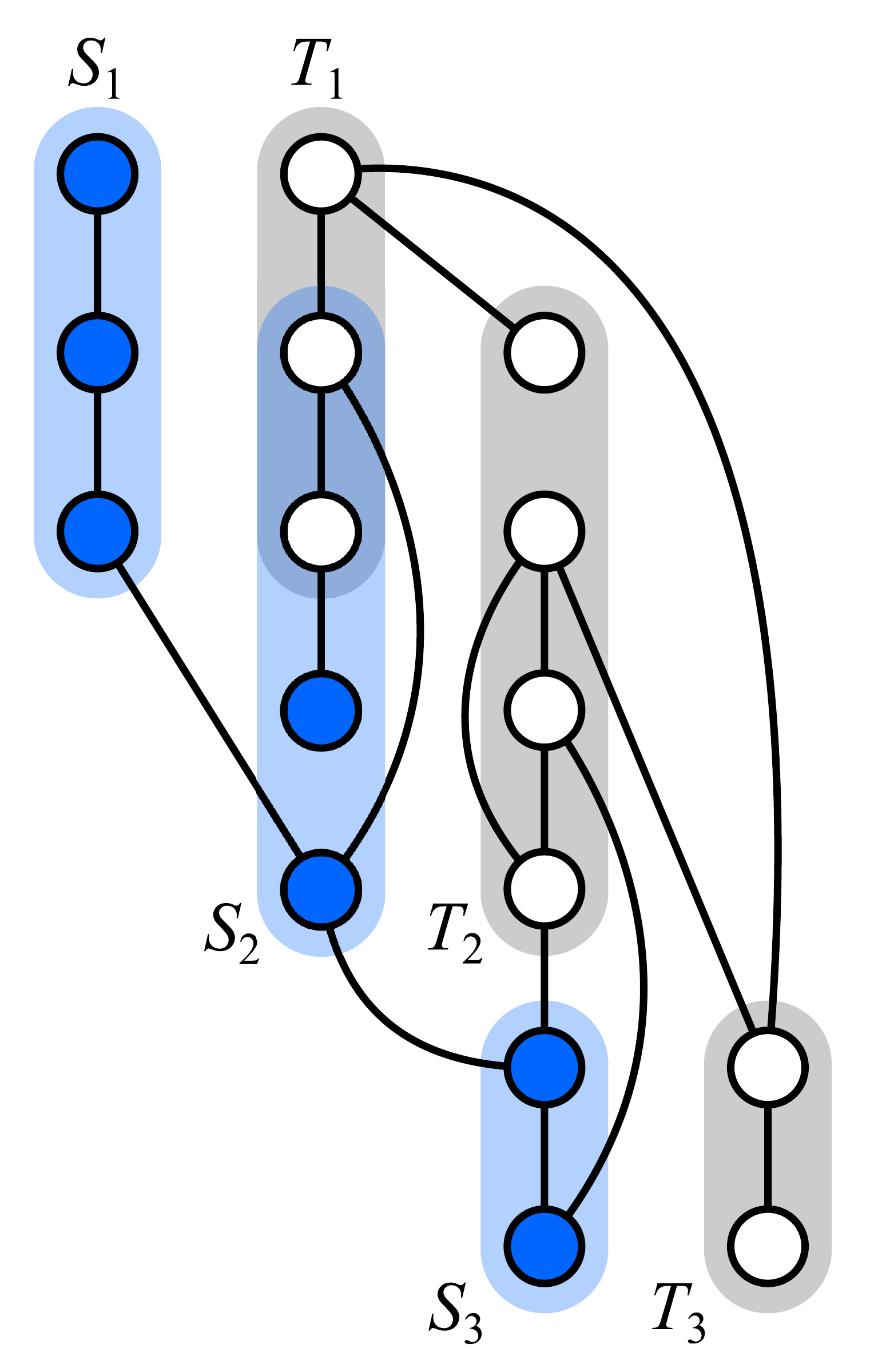}}\\
\caption{A $(2,3,1)$-kangaroo, with a hopping forcing set in blue and sets $S_i$ and $T_i$ highlighted and labeled. Note that $T_1\cap S_2$ is nonempty.}
\label{fig:kangarooex} 
\end{center}
\end{figure}

Define $\mc{M}_{k_1,\ldots,k_r}$ to be the set of $(k_1,\ldots,k_r)$-kangaroos. Then, just as the hopping throttling number of a $k$-kangaroo is bounded above by $|V(G)|-k$, we can bound the hopping throttling number of a $(k_1, \ldots, k_r)$-kangaroo from above by considering the sum of the $k_i$ values.

\begin{obs}\label{obs:thhkangaroo}
If $M\in \mc{M}_{k_1,\ldots,k_r}$, then
\[
\thh(M) < |V(M)| - (k_1 + \ldots + k_r) + 1.
\]
\end{obs}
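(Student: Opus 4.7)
The plan is to exhibit an explicit hopping forcing set of $M$ whose total throttling cost matches the bound, by reading the recipe for the forcing schedule directly off the kangaroo structure. Take
\[
B := V(M) \setminus \bigcup_{i=1}^{r} T_i;
\]
since the $T_i$'s are pairwise disjoint by property 2, $|B| = |V(M)| - \sum_{i=1}^{r} k_i$. The forcing schedule is the one the definition is built to support: at time step $i$, the $k_i$ vertices of $S_i$ simultaneously hop-force the $k_i$ vertices of $T_i$, each $s \in S_i$ picking a distinct target.

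The heart of the proof is validating this schedule by induction on $i$. At time $i-1$, I would verify that every vertex $s \in S_i$ is active, which splits into three checks. First, $s$ is blue: either $s \in V(M)\setminus \bigcup_j T_j \subseteq B$, or, by property 3, $s \in T_j$ for some $j < i$ and has already been forced at step $j$. Property 5 (domination of $S_i$ by $T_{i-1}$) is essentially what guarantees this case analysis is exhaustive once the $(i-1)$-th wave has been colored. Second, $s$ has not yet acted as a forcer, which is immediate from property 1 (the $S_j$'s are pairwise disjoint). Third, every neighbor of $s$ is blue: the white vertices at time $i-1$ lie in $T_i \cup \cdots \cup T_r$, and property 4 forbids any edge from $S_i$ into these sets.

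Once the schedule is validated, $\F^{(i)} = T_i$ for $1 \leq i \leq r$, so $\pth(M;B) \leq r$ and
\[
\thh(M) \;\leq\; |B| + r \;=\; |V(M)| - \sum_{i=1}^{r} k_i + r,
\]
which, after collecting the $k_i$'s and $r$'s, gives the strict inequality stated in the observation (each round of forcing contributes $k_i$ forced vertices for one unit of propagation time, yielding $k_i - 1$ of net savings per round).

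The main obstacle is the bookkeeping in the first (blueness) check, because the vertices of $S_i$ generically straddle the initial blue set and previously-forced sets. Properties 1, 3, and 5 have to be invoked together: property 3 controls the direction of the possible overlaps between $S_i$ and the $T_j$'s, property 5 guarantees that $S_i$ is fully covered by $B \cup T_1 \cup \cdots \cup T_{i-1}$ at time $i-1$, and property 1 rules out any $s \in S_i$ having been expended as a forcer at an earlier step. After this, conditions (b) and (c) above fall out of properties 1 and 4 with essentially no work, and the propagation-time upper bound of $r$ is immediate.
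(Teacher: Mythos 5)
Your schedule and its validation (blueness via property 3, not-yet-forced via property 1, all-neighbors-blue via property 4 and induction on the waves) are exactly the intended argument behind this observation, but the final count does not close. With $B=V(M)\setminus\bigcup_{i}T_i$ and propagation time at most $r$ you get $\thh(M)\le |V(M)|-\sum_i|T_i|+r$. Under the size convention you used, $|S_i|=|T_i|=k_i$ (admittedly what the multi-parameter definition literally says), this is $|V(M)|-\sum_i k_i+r$, which for every $r\ge 1$ lies on the wrong side of the claimed strict inequality $\thh(M)<|V(M)|-\sum_i k_i+1$; your own parenthetical remark that each round nets a savings of only $k_i-1$ concedes precisely this, so the assertion that ``collecting the $k_i$'s and $r$'s'' yields the observation is a non sequitur. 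Indeed, under that literal reading the statement is false: $\ol{K_2}$, with $S_1$ and $T_1$ singletons, would be a $(1)$-kangaroo, yet $\thh(\ol{K_2})=2=|V(\ol{K_2})|-k_1+1$.

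The missing ingredient is the size convention. As in the single-parameter definition (a $k$-kangaroo has $|S|=|T|=k+1$) and in the accelerator definition of Carlson and Kritschgau being imitated, each $S_i$ and $T_i$ must have $k_i+1$ vertices; the ``sets of $k_i$ vertices'' in the multi-parameter definition is a slip, as one can confirm by checking that $\mc{G}_0=\mc{M}_{1}$ must consist of the four-vertex graphs $2K_2$, $K_2\cup\ol{K_2}$, $\ol{K_4}$ to match the earlier characterization of $\thh(G)=|V(G)|$. With that correction your argument closes immediately: the $T_i$ are disjoint, so $|B|=|V(M)|-\sum_i(k_i+1)$, and $\pth(M;B)\le r$, giving $\thh(M)\le |V(M)|-\sum_i k_i<|V(M)|-\sum_i k_i+1$. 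A minor side point: property 5 is not what makes your blueness case analysis exhaustive; that follows from the choice $B=V(M)\setminus\bigcup_j T_j$ together with property 3. Property 5 only serves to pin down $\F^{(i)}=T_i$, and is not needed for the upper bound.
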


Observe that if a graph $G$ contains a graph in $\mc{M}_{k_1,\ldots,k_r}$ as an induced subgraph, then the throttling number of $G$ must also be bounded by the upper bound in Observation \ref{obs:thhkangaroo}.
Define \[
\mc{G}_k = \bigcup_{k_1+\ldots+k_r=k+1} \mc{M}_{k_1,\ldots,k_r}.
\]
We then have the following theorem.

\begin{thm}
Let $G$ be a graph. Then, $\thh(G)\geq |V(G)|-k$ if and only if $G$ does not contain a graph in $\mc{G}_k$ as an induced subgraph.
\end{thm}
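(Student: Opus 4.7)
The plan is to prove both directions by contrapositive via the structural correspondence between an optimal hopping forcing process and a $(k_1,\ldots,k_r)$-kangaroo.

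For the containment direction, suppose $G$ contains an induced subgraph $M\in\mc{M}_{k_1,\ldots,k_r}$ with $k_1+\cdots+k_r=k+1$. Color every vertex of $V(G)\setminus V(M)$ blue, and on $M$ itself use a strategy realizing Observation \ref{obs:thhkangaroo}, i.e.\ one that certifies $\thh(M)\leq |V(M)|-\sum_i k_i$. This strategy still functions in $G$ because, since $M$ is induced, the neighbours of any $v\in V(M)$ within $M$ agree with its neighbours in $V(M)$ as seen from $G$, while its remaining $G$-neighbours all lie in $V(G)\setminus V(M)$ and are blue from the outset; the hopping condition therefore transports from $M$ to $G$. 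Adding the $|V(G)|-|V(M)|$ outside blues to the $M$-strategy yields $\thh(G)\leq (|V(G)|-|V(M)|)+|V(M)|-(k+1)=|V(G)|-k-1<|V(G)|-k$.

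For the converse, assume $\thh(G)\leq |V(G)|-k-1$ and fix an optimal pair $(B,\F)$ realizing this, with $p:=\pth(G;\F)$. For $t=1,\ldots,p$, let $T_t=\F^{(t)}$ and let $S_t$ be the set of vertices performing a force at time $t$; because hopping forces are one-to-one, $|S_t|=|T_t|=:a_t$, and $\sum_t a_t = |V(G)|-|B|\geq k+1+p$. Before anything else, I would \emph{normalize} $\F$ by greedily moving each force $v\to w$ originally scheduled at time $i\geq 2$ to time $i-1$ whenever $v$ is already blue and has every neighbour blue at time $i-1$; the multiset of force-times strictly decreases with each move, so this terminates. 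Setting $V':=\bigcup_t(S_t\cup T_t)$, I would then verify the five kangaroo properties on $G[V']$: (1) and (2) because each vertex forces and is forced at most once in any hopping process; (3) because any $v\in S_i$ is blue before time $i$ and hence cannot appear in $T_j$ for $j\geq i$; (4) because the hopping rule forbids a white neighbour at time $i$, while the white vertices of $V'$ at that moment are exactly $\bigcup_{j\geq i}T_j$; and (5) because after normalization, any $v\in S_i$ with $i\geq 2$ either lies in $T_{i-1}$ or had a white neighbour at time $i-1$, which is forced to lie in $T_{i-1}$ (else it would still be white at time $i$, blocking $v$). Hence $G[V']$ is an $(a_1,\ldots,a_p)$-kangaroo.

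Finally, I would trim this kangaroo into $\mc{G}_k$. Let $r$ be the smallest index with $\sum_{t\leq r}a_t\geq k+1$, set $k_t':=a_t$ for $t<r$ and $k_r':=k+1-\sum_{t<r}a_t$ (which lies in $[1,a_r]$ by the minimality of $r$), and restrict $S_r,T_r$ to any matched subsets $S_r'\subseteq S_r$, $T_r'\subseteq T_r$ of size $k_r'$. Each of properties 1--5 transfers to the restriction because $S_r'\subseteq S_r$, $T_r'\subseteq T_r$, and $T_{r-1}$ is unchanged, so $G\bigl[\bigcup_{t<r}(S_t\cup T_t)\cup S_r'\cup T_r'\bigr]\in\mc{M}_{k_1',\ldots,k_r'}\subseteq\mc{G}_k$, completing the contrapositive. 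The main obstacle is the normalization step: one must show that the greedy rescheduling produces a legal chronological list of hopping forces at every stage, and that at termination every $S_i$ is genuinely dominated by $T_{i-1}$; the trimming is then straightforward bookkeeping.
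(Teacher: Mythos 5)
Your overall skeleton is the intended one (the paper itself omits the argument, deferring to the accelerator proof in \cite{CK20}): extract the forcer/target sets $S_t,T_t$ from an optimal process, verify the five kangaroo properties, and trim. Your verification of properties 1--5 is essentially sound, and the ``normalization'' you flag as the main obstacle is in fact already built into the paper's definition of $\calf^{(t)}$, which performs every force of $\calf$ as early as possible, so your domination argument applies to the optimal set of forces as given. The genuine gap is the bookkeeping of set sizes versus parameters, and it breaks the converse direction. For the theorem to be true --- and for Observation \ref{obs:thhkangaroo} and the three-graph characterization of $\thh(G)=|V(G)|$ (the case $k=0$) to hold --- a $(k_1,\ldots,k_r)$-kangaroo must have $|S_i|=|T_i|=k_i+1$: this is what makes $G[S_i\cup T_i]$ a $k_i$-kangaroo as the paper remarks, and what makes $\mc{G}_0$ equal to $\{2K_2,\,K_2\cup\ol{K_2},\,\ol{K_4}\}$ rather than $\{\ol{K_2}\}$ (under the literal ``sets of $k_i$ vertices'' reading, $P_3$ contains $\ol{K_2}\in\mc{G}_0$ yet $\thh(P_3)=3$, so the statement itself would fail). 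Your two halves use incompatible readings: the containment direction invokes $\thh(M)\leq |V(M)|-(k+1)$, which requires the $k_i+1$ convention, while your trimming makes the \emph{sizes} $|S_t'|$ sum to $k+1$. Under the correct convention the trimmed graph is a kangaroo whose parameters sum to only $k+1-r$ (and your last level may even have size $1$, which is not a legal level), so it does not lie in $\mc{G}_k$ and the contrapositive is not closed.

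The inequality you derive, $\sum_t a_t=|V(G)|-|B|\geq k+1+p$, is exactly the budget you need but do not use: it gives $\sum_t(a_t-1)\geq k+1$, and the trimming must be arranged so that $\sum_t(|S_t'|-1)=k+1$ with every retained level of size at least $2$. Once you do this, time steps with $a_t=1$ become a real obstruction that your argument never addresses: such a step contributes nothing to the budget, cannot serve as a level (the $k_i$ are positive), cannot be deleted from the middle of the window without destroying property 5 for the next level (its forcers are dominated by the targets of the \emph{immediately} preceding time step, not by those of an earlier one), and cannot be merged into an adjacent level without violating property 4. Handling these single-force steps (or showing one may restrict to processes/windows avoiding them) is the substantive content of the omitted proof, and without it the converse direction is incomplete. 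A minor point: kangaroos carry no matching, so ``matched subsets'' should simply be equal-sized subsets, say the targets of $k_r'$ forces at time $r$ together with their forcers.
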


The proof for this theorem is analogous to that of \cite[Theorem 4.11]{CK20}, substituting standard throttling for hopping throttling and $(a_1, \ldots, a_r)$-accelerators for $(k_1,\ldots,k_r)$-kangaroos, so we omit it here. That said, this theorem provides us with a crucial corollary, one that allows us to classify exactly when $\thh(G)=|V(G)|-k$.

\begin{cor}
Let $G$ be a graph. Then, $\thh(G) = |V(G)|-k$ if and only if $G$ does not contain a graph in $\mc{G}_k$ as an induced subgraph, but contains a graph in $\mc{G}_{k-1}$ as an induced subgraph.
\end{cor}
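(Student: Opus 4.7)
The plan is to derive this corollary as an immediate consequence of the preceding theorem by invoking it twice and exploiting the fact that $\thh(G)$ is integer-valued. The key observation is that the equality $\thh(G) = |V(G)| - k$ is logically equivalent to the conjunction of $\thh(G) \geq |V(G)| - k$ with the failure of $\thh(G) \geq |V(G)| - (k-1)$. The first condition is a direct instance of the preceding theorem at parameter $k$, while the second is precisely the negation of the theorem's hypothesis at parameter $k-1$.

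Concretely, I would first apply the theorem with parameter $k$ to record that $\thh(G) \geq |V(G)| - k$ if and only if $G$ contains no induced subgraph from $\mc{G}_k$. Next, I would apply the theorem with parameter $k-1$ and take the contrapositive of that biconditional, obtaining that $\thh(G) < |V(G)| - (k-1)$ if and only if $G$ does contain some induced subgraph from $\mc{G}_{k-1}$. Here I am tacitly using that $\thh(G)$ is an integer, so the inequality $\thh(G) \leq |V(G)| - k$ coincides with the strict inequality $\thh(G) < |V(G)| - (k-1)$. Conjoining the two biconditionals then yields exactly the statement of the corollary.

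There is essentially no obstacle in this argument, since it is a purely logical manipulation of the preceding theorem and introduces no new graph-theoretic content. The only minor subtlety is the integrality step noted above, which also requires $k \geq 1$ so that $|V(G)| - (k-1)$ is a sensible threshold (the boundary case $k = 0$ having already been handled by the dedicated characterization of graphs with $\thh(G) = |V(G)|$ earlier in the section). All the substantive work has been absorbed into the theorem itself, whose proof in turn mirrors the accelerator-based characterization for standard throttling in \cite{CK20}.
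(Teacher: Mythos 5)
Your proposal is correct and matches the intended argument: the paper states this corollary without a written proof precisely because it is the immediate double application of the preceding theorem (at $k$ and, via integrality and contraposition, at $k-1$) that you describe. Your note about requiring $k\geq 1$ so that $\mc{G}_{k-1}$ is defined is a fair observation and does not affect the substance.
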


Using the definitions and results above, it is straightforward to algorithmically find all graphs in $\mc{G}_k$ by first generating all graphs in $\mc{M}_{k_1,\ldots,k_r}$ where $\sum k_i = k+1$, then removing any possible isomorphisms. Furthermore, we can then remove any graphs from $\mc{G}_k$ that are supergraphs of another graph in $\mc{G}_k$ to eliminate more redundancy. Implementing this process using the Python code available in \cite{github} for $\mc{G}_1$ gives us $108$ forbidden subgraphs for $\thh(G)=|V(G)|-1$. 

\subsection{Comparing hopping throttling to other types of throttling}\label{sec:comparingThrottling}

In this section, we consider how the hopping throttling number relates to other throttling numbers, namely those for standard zero forcing and $\Zf$ forcing. First, we will show that $\thh(G)$ and $\thz(G)$ are incomparable; that is, the gap between $\thh(G)$ and $\thz(G)$ can be made arbitrarily large in either direction. To do so, we consider a few examples of graphs with significant differences between their hopping throttling and standard throttling numbers. The first graph we examine is the Petersen graph, which exhibits a gap of two between its standard and hopping throttling numbers.

\begin{ex}\label{ex:throttlingpetersen}
Consider the Petersen graph $P$. By \cite[Proposition 3.26]{AIM08}, $\Z(P)=5$, so $\thz(P)$ is at least $6$. By coloring the five vertices in the outer cycle of $P$ blue, we can force the inner cycle to become blue in the first time step, giving us $\thz(P)=6$.

Note that $\kappa(P)=3$, so by Theorem \ref{thm:thhkappa}, $\thh(P) \geq \lceil 2\sqrt{10-3}+3-1 \rceil = 8$. Together with the forcing process illustrated in Figure \ref{fig:petersenforcing}, we have that $\thh(P)=8$.
\end{ex}

\begin{figure}[ht] \begin{center}
\scalebox{.4}{\includegraphics{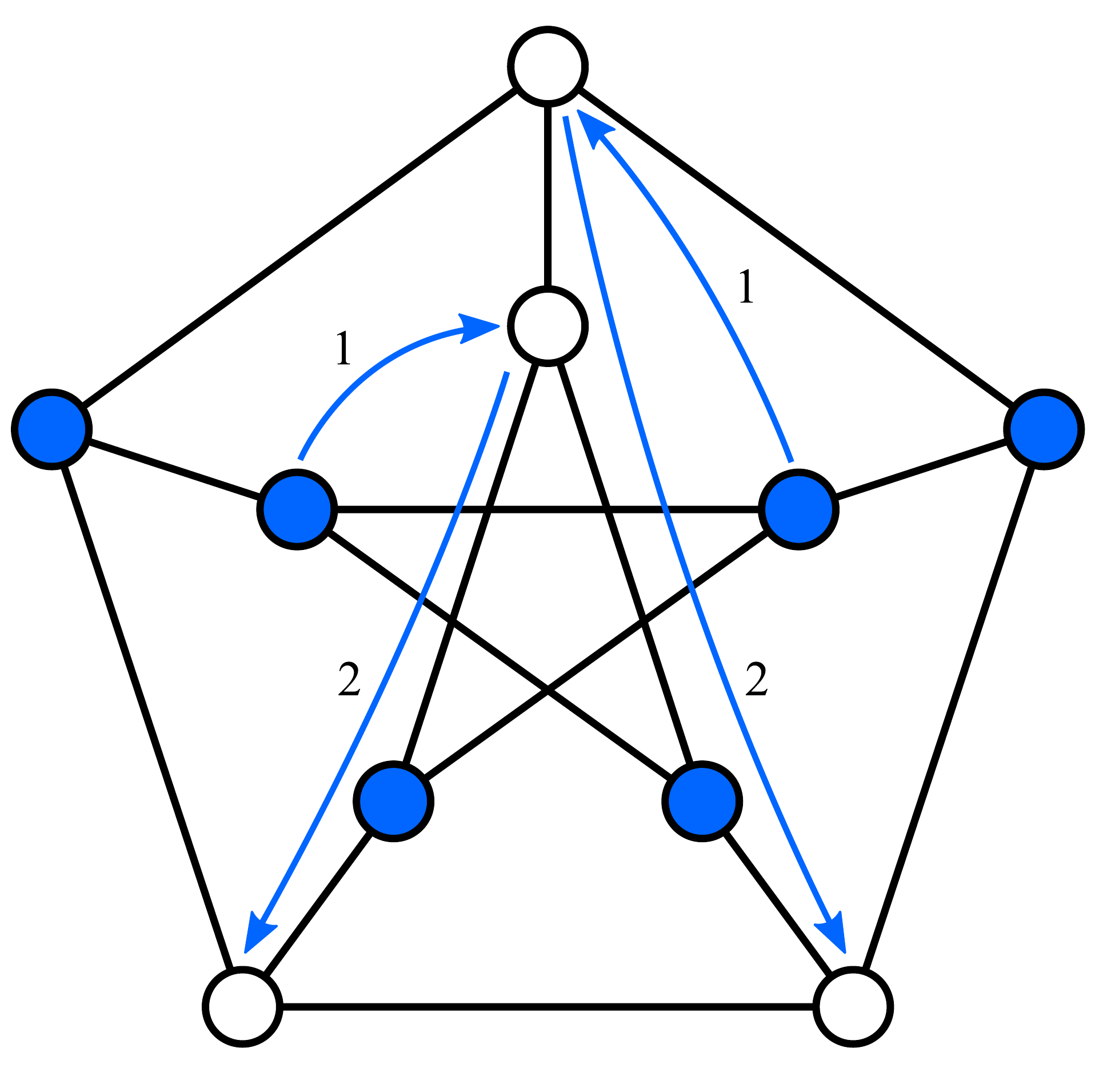}}\\
\caption{The hopping forcing process on the Petersen graph.}
\label{fig:petersenforcing} 
\end{center}
\end{figure}

As we have just seen, the Petersen graph has $\thh(G)>\thz(G)$; we now show that there are infinitely many graphs with this property by determining the throttling numbers of the complete bipartite graphs.

\begin{ex}\label{ex:throttlingKst}
Consider once again the complete bipartite graph $K_{s,t}$ on $s+t$ vertices with partite sets $U$ and $V$ such that $2\leq s=|U|\leq |V|=t$. By Proposition \ref{prop:forcingKst}, $\Z(K_{s,t})=s+t-2$. Furthermore, by coloring every vertex but one in each partite set blue, we can force the remaining two vertices to become blue in the first time step, implying that $\thz(K_{s,t})=s+t-1$.

In addition, by Proposition \ref{prop:thhkappatight}, $\thh(K_{s,t}) = \lceil 2\sqrt{t}+s-1 \rceil$.
\end{ex}



In fact, observe that by these throttling numbers, the gap between $\thh(K_{s,t})$ and $\thz(K_{s,t})$ can be made arbitrary large. Now, to show that there are infinitely many graphs with $\thz(G)>\thh(G)$, we use the following family.

\begin{ex}\label{ex:throttlingksp2}
Consider the family of graphs $K_s \cart P_2$ on $2s$ vertices. By \cite[Proposition 3.3]{AIM08}, $\Z(K_s \cart P_2)=s$, so $\thz(K_s \cart P_2)\geq s+1$. By coloring all $s$ vertices in one copy of $K_s$ blue, we can force the rest of the vertices to become blue in the first time step, implying that $\thz(K_s \cart P_2) = s+1$.

Observe that $\kappa(K_s \cart P_2)=s$, and so by Theorem \ref{thm:thhkappa}, \[
\thh(K_s \cart P_2) \geq \lceil 2\sqrt{2s-s}+s-1 \rceil = \lceil 2\sqrt{s}+s-1 \rceil.
\]
To show this is also an upper bound, consider a hopping forcing process in which we color one copy of $K_s$ entirely blue, then throttle on the remaining vertices by coloring some blue, and letting the vertices they are matched to do the forcing. Using Proposition \ref{prop:thhempty}, this gives $\thh(K_s \cart P_2)\leq s+\lceil 2\sqrt{s} - 1 \rceil = \lceil 2\sqrt{s}+s-1 \rceil$.
\end{ex}

The previous example, as with the complete bipartite graphs, shows that the gap between $\thh(K_s \cart P_2)$ and $\thz(K_s \cart P_2)$ can be made arbitrarily far apart, albeit in the other direction. So, combining Examples \ref{ex:throttlingKst} and \ref{ex:throttlingksp2}, we arrive at the following remark.

\begin{rmk}
The gap between $\thh(G)$ and $\thz(G)$ can be made arbitrarily far apart in both directions.
\end{rmk}

While the standard throttling number cannot be bounded nicely by the hopping throttling number, the $\Zf$ throttling number can. Since $\zf$ forcing involves both the standard and hopping color change rules, any zero forcing set of a graph $G$ that realizes $\thz(G)$ or any hopping forcing set that realizes $\thh(G)$ will be a $\zf$ forcing set that bounds $\thzf(G)$.

\begin{obs}
For any graph $G$, $\thzf(G)\leq \min\{ \thz(G), \thh(G) \}$.
\end{obs}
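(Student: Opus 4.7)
The plan is to argue directly from the definition of the $\zf$ color change rule, which by Theorem~\ref{thm:ccrzf} (i.e., the $\CCRZf$ rule) permits the application of either the standard rule $\Z$ or the hopping rule $\H$ at each force. Since both rules are available, any valid chronological list of standard forces is a valid chronological list of $\zf$ forces, and likewise for hopping. Therefore any $\Z$ forcing set is a $\zf$ forcing set and any $\H$ forcing set is a $\zf$ forcing set, and the propagation time under $\zf$ can only be at most the propagation time under either sub-rule applied on its own.

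Concretely, I would proceed as follows. Let $B_1$ be a standard zero forcing set of $G$ realizing $\thz(G)$, so $\thz(G) = |B_1| + \ptz(G;B_1)$, and let $\calf_1$ be a set of standard forces of $B_1$ with $\ptz(G;\calf_1) = \ptz(G;B_1)$. Since every force in $\calf_1$ is also a valid $\zf$ force, $\calf_1$ is a set of $\zf$ forces of $B_1$, and hence $\pt_{\zf}(G;B_1) \leq \ptz(G;\calf_1) = \ptz(G;B_1)$. This gives
\[
\thzf(G) \;\leq\; |B_1| + \pt_{\zf}(G;B_1) \;\leq\; |B_1| + \ptz(G;B_1) \;=\; \thz(G).
\]
An identical argument with a hopping forcing set $B_2$ realizing $\thh(G)$ and a set of hopping forces $\calf_2$ of $B_2$ yields $\thzf(G) \leq \thh(G)$. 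Taking the minimum of the two inequalities produces the desired bound.

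There is essentially no obstacle here; the only point that requires care is confirming that the abstract propagation-time definition respects the inclusion of rules, i.e., that $\pt_{\zf}(G;B) \leq \ptx(G;B)$ whenever $\X \in \{\Z, \H\}$ and $B$ is an $\X$ forcing set. This follows immediately because the optimization $\min\{\pt_{\zf}(G;\calf) : \calf \text{ is a set of } \zf \text{ forces of } B\}$ ranges over a superset of the sets of forces considered in $\ptx(G;B)$.
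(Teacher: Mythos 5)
Your argument is correct and matches the paper's reasoning: the paper justifies this observation by noting that since $\zf$ forcing permits both the standard and hopping color change rules, any forcing set realizing $\thz(G)$ or $\thh(G)$ (together with its set of forces) remains valid under the $\zf$ rule, which is exactly your argument carried out with the details of propagation time made explicit. No changes needed.
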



\section{Product throttling for hopping forcing}\label{sec:productThrottling}

So far, we have defined hopping throttling as minimizing the sum of the size of a hopping forcing set and its propagation time. However, the choice to add these two values is just that---a choice. Instead, we can try to minimize the size of the hopping forcing set \emph{times} its propagation time; this is known as product throttling. First introduced in \cite{BBB19} in the context of the graph theoretic game Cops and Robbers, product throttling was later applied to various types of forcing by Anderson et al.\ in \cite{ACF21}. In this section, we consider product throttling for hopping forcing, and show that it behaves very similarly to product throttling for standard zero forcing. 

First, note that for a graph $G$, non-negative integer $k$, and an abstract color change rule $\X$, $\ptx(G,k)$ is the minimum size of a subset $B \subseteq V(G)$ with $\ptx(G,B) = k$. There are two ways to define product throttling---either with or without an initial cost. We give each definition below for hopping forcing; the definitions for standard zero forcing are analogous and use a subscript $\Z$.

\begin{defn}(\cite{ACF21})
The \emph{product hopping throttling number with initial cost} of a graph $G$ is defined as
\[
\thht(G) = \min_{\H(G)\leq k\leq |V(G)|} \left\{ k \left(1+\pth(G,k) \right) \right\}.
\]
Similarly, the \emph{product hopping throttling number without initial cost} of $G$ is
\[
\thhs(G) = \min_{\H(G)\leq k < |V(G)|} \left\{ k \cdot \pth(G,k) \right\}.
\]
\end{defn}

Besides the presence of the initial cost of course, there is one notable key difference between these definitions---for no initial cost product throttling, we do not consider the case where $k=|V(G)|$. If we did, then since $\pth\left( G,|V(G)| \right)=0$, we would always have that $\thhs(G)=0$. This is not an issue for initial cost product throttling, since $1+\pth\left( G,|V(G)| \right)$ can never equal $0$ as propagation time is always positive.

In \cite[Remark 5.1]{ACF21}, Anderson et al.\ show that initial cost standard product throttling of any graph $G$ is always equal to $n=|V(G)|$, and the same argument holds for initial cost hopping product throttling. For any hopping forcing set $B$, we know by logic in the proof of Theorem \ref{thm:thhkappa} that at most $|B|$ vertices can be forced in any particular time step. After $\pth(G;B)$ time steps, all $n$ vertices of $G$ must be blue, so we have that
\[
|B|+\underbrace{|B|+\ldots+|B|}_{\pth(G;B) \text{ times}} \geq n
\iff |B| + |B|\pth(G;B) \geq n
\iff |B|\left(1+\pth(G;B) \right) \geq n.
\]
Thus, for every integer $k$ such that $\H(G)\leq k\leq n$, we have that $k \left(1+\pth(G,k) \right) \geq n$. Since $k \left(1+\pth(G,k) \right)=n$ when $k=n$, we have the following observation.

\begin{obs}
For any graph $G$, $\thht(G)=|V(G)|$.
\end{obs}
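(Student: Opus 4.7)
The plan is to prove the matching inequalities $\thht(G) \geq |V(G)|$ and $\thht(G) \leq |V(G)|$ separately, the second of which is essentially trivial and will pin down the value exactly.

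For the lower bound, I would fix an arbitrary integer $k$ with $\H(G) \leq k \leq n$ and any hopping forcing set $B$ of size $k$ that realizes $\pth(G;B) = \pth(G,k)$. The key counting observation, already used implicitly in the proof of Theorem \ref{thm:thhkappa}, is that under the hopping color change rule each blue vertex forces at most once in the entire process and each force colors exactly one new vertex. Consequently, during any single time step the number of newly blue vertices is at most $|B| = k$, since only vertices that have not yet forced can perform a force and there are at most $|B|$ such vertices available at any moment (each active vertex consumed in a time step is replaced by at most one newly blue vertex that could force later, so the supply never exceeds $|B|$). Summing the initial $k$ vertices with the at-most-$k$ new forces over $\pth(G,k)$ time steps gives
\[
n \;\leq\; k + k \cdot \pth(G,k) \;=\; k\bigl(1 + \pth(G,k)\bigr),
\]
so $\thht(G) \geq n$ after minimizing over $k$.

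For the upper bound, I would choose $k = n$, take $B = V(G)$, and observe that $\pth(G;V(G)) = 0$, so $k(1 + \pth(G,k)) = n \cdot 1 = n$. Combined with the lower bound, this gives $\thht(G) = n = |V(G)|$.

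There is no real obstacle here; the only subtlety is the per-time-step bound of $|B|$ new forces, which is exactly the hopping-analogue of the bookkeeping in Theorem \ref{thm:thhkappa} and requires only that forces are one-at-a-time per vertex and that each vertex fires at most once. The argument is completely parallel to the standard zero forcing case in \cite[Remark 5.1]{ACF21}, which I would cite for further context.
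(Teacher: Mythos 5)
Your proposal is correct and follows essentially the same route as the paper: the lower bound comes from the same per-time-step count (at most $|B|$ new blue vertices per step, as in the proof of Theorem \ref{thm:thhkappa}), yielding $k(1+\pth(G,k))\geq n$, and the upper bound is realized by $B=V(G)$ with propagation time $0$.
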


No initial cost product throttling for standard zero forcing is comparably more interesting, but can still be characterized quite succinctly. In fact, Anderson et al.\ found that $\thzs(G)$ is always equal to the smallest $k$ such that there exists a standard forcing set $B$ of size $k$ with $\ptz(G;B)=1$. Define $\kz(G,p)= \min\{|B| \mid \ptz(G;B)=p \}$.

\begin{thm}[{\cite[Theorem 5.3]{ACF21}}]\label{thm:thz*}
For any graph $G$, $\thzs(G)$ is the least $k$ such that $\ptz(G,k)=1$; i.e., $\thzs(G)=\kz(G,1)$. Necessarily $\kz(G,1)\geq \frac{n}{2}$.
\end{thm}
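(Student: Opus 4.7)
The plan is to prove the identity $\thzs(G) = \kz(G,1)$ by two matching inequalities, and then handle the lower bound $\kz(G,1) \geq n/2$ as a short separate argument.

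First, for $\thzs(G) \leq \kz(G,1)$, I would choose a standard zero forcing set $B$ of minimum size subject to $\ptz(G;B) = 1$, so $|B| = \kz(G,1)$. Since $|B| < n$ and $|B| \geq \Z(G)$, this $B$ is an admissible candidate in the minimization defining $\thzs(G)$, yielding $\thzs(G) \leq |B|\cdot \ptz(G;B) = \kz(G,1)$.

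For the harder direction $\thzs(G) \geq \kz(G,1)$, I would show that every admissible $B$ satisfies $|B|\cdot \ptz(G;B) \geq \kz(G,1)$. Fix such $B$, write $k = |B|$ and $p = \ptz(G;B)$, and take a propagation-optimal set of standard forces $\calf$ of $B$. The key observation, special to the rule $\Z$, is that $\calf$ decomposes into exactly $k$ forcing chains rooted at the vertices of $B$: every non-initial blue vertex has a unique forcer, and every blue vertex performs at most one force. Each chain advances by at most one force per time step, so $|\calf\up{i}| \leq k$ for every $i \geq 1$. Setting $B^* = \bigcup_{i=0}^{p-1} \calf\up{i}$, the set of vertices that are blue immediately before the final time step, I obtain $|B^*| \leq k + (p-1)k = kp$. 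Moreover the remaining forces in $\calf\up{p}$ color all of $V(G) \setminus B^*$ blue in a single additional step, and $B^* \subsetneq V(G)$, so $\ptz(G;B^*) = 1$; hence $\kz(G,1) \leq |B^*| \leq kp$, as required.

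For the bound $\kz(G,1) \geq n/2$: any $B$ with $\ptz(G;B) = 1$ turns every vertex of $V(G)\setminus B$ blue in a single time step, and the standard color change rule permits each blue vertex to force at most one white vertex. Consequently $n - |B| \leq |B|$, i.e.\ $|B| \geq n/2$, and minimizing over such $B$ gives the claim.

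I expect the chain-decomposition bound $|\calf\up{i}| \leq k$ to be the main obstacle, since it relies on a structural feature unique to $\Z$ (and failing for $\H$): standard zero forcing chains are well-defined directed paths with initial vertices in $B$, so the number of chains is pinned at $|B|$ throughout the process. Everything else is routine accounting around this fact, plus the observation that truncating the final forcing step of any admissible $B$ produces a candidate for $\kz(G,1)$.
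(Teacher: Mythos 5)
Your proof is correct, but it follows a genuinely different route from the one this paper leans on: the paper does not prove Theorem \ref{thm:thz*} at all, citing it from \cite{ACF21}, and the argument it sketches (for the hopping analogue, Theorem \ref{thm:thh*}) is the reversal-based one, where one adjoins to a forcing set $B$ a reversal/terminus and uses the containment of Lemma \ref{lem:andersonrev} (resp.\ Lemma \ref{lem:hoppingreversal}) to show that $\hat{B}=B\cup\rev(B)$ at most doubles the initial set while (roughly) halving the propagation time, and then iterates until the propagation time is $1$ without increasing the product. You instead truncate a single optimal process at its penultimate step: the forcing chains of a set of forces $\calf$ are vertex-disjoint paths each rooted at a vertex of $B$ and advancing at most one force per time step, so $|\calf\up{i}|\le|B|$ for all $i\ge 1$, hence the blue set $B^*=\bigcup_{i=0}^{p-1}\calf\up{i}$ has size at most $|B|\cdot\ptz(G;B)$ and satisfies $\ptz(G;B^*)=1$ (its complement is exactly $\calf\up{p}$, forceable in one round, and is nonempty); this gives $\kz(G,1)\le |B|\,\ptz(G;B)$ for every admissible $B$, which together with your easy upper bound and the one-step count $n-|B|\le|B|$ yields the theorem. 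Both arguments are sound; the reversal approach buys the structural machinery (termini, Lemma \ref{lem:hoppingreversal}) that this paper reuses elsewhere and makes the ``double the set, halve the time'' heuristic explicit, while your truncation argument is shorter, avoids reversals entirely, and—since at most $|B|$ vertices are forced per step under hopping as well—would also prove Theorem \ref{thm:thh*} directly without the augmented-graph detour. One degenerate caveat worth a clause: if $G$ has no edges then no set has propagation time $1$ and no admissible $k<n$ exists, so both $\thzs(G)$ and $\kz(G,1)$ are $\infty$ by convention; in all other cases your minimum pt-$1$ set is indeed an admissible candidate, as you note.
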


We will show that the same result holds for hopping. To do so, we first need to consider the notion of reversing a set of hopping forces.

\begin{defn}
Let $B$ be a hopping forcing set of a graph $G$, and let $\mc{F}$ be a set of hopping forces of $B$. Then the \emph{reverse set of hopping forces} of $\mc{F}$, denoted $\Rev(\F)$, is the set of hopping forces obtained by reversing each force in $\F$. The \emph{terminus} of $\F$, denoted $\Term(\F)$, is the set of vertices of $G$ that do not perform a force in $\F$.
\end{defn}

These sets were first defined in terms of standard forcing \cite[Definition 2.3]{HHK12}, but are similarly crucial for hopping forcing processes, as a single hopping forcing set can have many corresponding termini depending on which vertices get forced when. Despite this, no matter which set of hopping forces $\F$ of a hopping forcing set we consider, the terminus of $\F$ will itself always be a hopping forcing set.

\begin{prop}\label{prop:HFSreversing}
If $B$ is a hopping forcing set of $G$ and $\mc{F}$ is a set of hopping forces of $B$, then $\Term(\mc{F})$ is a hopping forcing set of $G$ and $\Rev(\mc{F})$ is a valid set of hopping forces of $\Term(\mc{F})$.
\end{prop}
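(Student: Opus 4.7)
The plan is to fix a chronological list $f_1, \ldots, f_k$ of the forces in $\mathcal{F}$, where $f_i$ denotes $v_i \overset{\H}{\rightarrow} w_i$, and to show that performing the reversed forces $\Rev(f_k), \Rev(f_{k-1}), \ldots, \Rev(f_1)$ in that order constitutes a valid chronological list of hopping forces starting from the initial blue set $\Term(\mathcal{F})$. A preliminary observation is that, since no vertex hop-forces more than once and no vertex is forced more than once, the $v_i$ are all distinct, the $w_i$ are all distinct, and $\Term(\mathcal{F}) = V(G) \setminus \{v_1, \ldots, v_k\}$ while $B \cup \{w_1, \ldots, w_k\} = V(G)$ with $B$ disjoint from $\{w_1, \ldots, w_k\}$.

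The core of the argument is a downward induction on $j$ from $k$ to $1$, showing that after performing $\Rev(f_k), \ldots, \Rev(f_{j+1})$ the current blue set equals $V(G) \setminus \{v_1, \ldots, v_j\}$ and that $\Rev(f_j) : w_j \to v_j$ is a legal hopping force at that moment. Three of the four conditions in the hopping rule are routine: $w_j$ is blue because $w_j \neq v_l$ for $l \leq j$ (otherwise $v_l = w_j$ would have to be blue strictly before step $l$, impossible since $w_j \notin B$ and $w_j$ is forced only at step $j$); $v_j$ is white because $v_j \in \{v_1,\ldots,v_j\}$; and $w_j$ has not yet performed a force in the reverse process because a vertex can appear as the forced vertex in $\mathcal{F}$ only once.

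The main obstacle is verifying the global hopping condition $N(w_j) \subseteq V(G) \setminus \{v_1,\ldots,v_j\}$, which I would handle by contradiction. Suppose $v_l \in N(w_j)$ for some $l \leq j$. If $l < j$, then the original force $f_l$ requires $N(v_l) \subseteq B \cup \{w_1,\ldots,w_{l-1}\}$ at the moment $v_l$ forces, which would place $w_j$ in the blue set before step $l$; this contradicts $w_j \notin B$ together with $w_j$ being forced only at step $j > l$. If $l = j$, then the hop $v_j \overset{\H}{\rightarrow} w_j$ itself requires $v_j \notin N(w_j)$. Hence no such $v_l$ exists. This closes the induction, so after applying $\Rev(f_1)$ the blue set is all of $V(G)$, yielding both that $\Rev(\mathcal{F})$ is a valid set of hopping forces of $\Term(\mathcal{F})$ and that $\Term(\mathcal{F})$ is a hopping forcing set of $G$.
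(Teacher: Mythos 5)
Your proof is correct and follows essentially the same route as the paper's: both reverse the chronological list of forces and verify inductively that each reversed hop is legal, using the fact that a forward forcer must have had all its neighbors blue to conclude that the relevant neighbors of the reverse forcer are already blue (your explicit blue-set bookkeeping within $G$ replaces the paper's ``delete the last forced vertex and repeat'' induction, but the key verification is identical). One small caution: your preliminary claim that $B \cup \{w_1,\ldots,w_k\} = V(G)$ need not hold, since a maximal set of hopping forces of a hopping forcing set may fail to color all of $V(G)$ (hopping final colorings are not unique); fortunately this claim is never used in your argument, so the proof stands without it.
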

\begin{proof}
Let
\[
\mc{L}=( u_1\rightarrow v_1,\ u_2\rightarrow v_2,\ \ldots,\ u_n \rightarrow v_n )
\]
be the chronological list of hopping forces of $B$ that yields the set of forces $\mc{F}$. We can write $\mc{L}$ in reverse order and reverse each force, giving
\[
\mc{L}'=( v_n\rightarrow u_n,\ v_{n-1}\rightarrow u_{n-1},\ \ldots,\ v_1 \rightarrow u_1 ).
\]
Let $B'=\Term(\mc{F})$; we will show that $\mc{L}'$ gives a valid chronological list of forces of $B'$.

Consider the first ``force'' in $\mc{L}'$, $v_n\rightarrow u_n$. In $\mc{L}$, the last force performed is $u_n \rightarrow v_n$. Immediately before this force was performed, $v_n$ was colored white, implying that each of the neighbors of $v_n$ were dormant at this time. As such, since $u_n \rightarrow v_n$ is the last force performed in $\mc{L}$, none of the neighbors of $v_n$ perform any forces in $\mc{L}$. Then, by definition of terminus, every neighbor of $v_n$ must be in $B'$. Thus, since $v_n$ does not perform a force in $\mc{L}$, it is also in $B'$, implying $v_n$ is active at time $0$ in $\L'$. As a result, $v_n\rightarrow u_n$ is a valid force in $\mc{L}'$. 

To show that $v_{n-1}\force u_{n-1}$ is a valid hopping force in $\L'$, consider the graph $G-v_n$. Observe that $\L \setminus \{ u_n \force v_n \}$ is a valid chronological list of hopping forces of $B$ on $G-v_n$, since we perform the same forces as on $G$ except the last, in which we would force $v_n$. So, by similar logic to the above, $v_{n-1}\force u_{n-1}$ is a valid force in $\L'$. A similar argument holds to show that every other force in $\L'$ is a valid hopping force. Thus, applying the forces in the chronological list of hopping forces $\L'$ of $B'=\Term(\F)$ forces the entire graph to become blue, and so $\Term(\F)$ is a hopping forcing set.

Since $\mc{L'}$ yields the set of hopping forces $\Rev(\mc{F})$ of $\Term(\mc{F})$, it follows that $\Rev(\mc{F})$ is a valid set of hopping forces of $\Term(\mc{F})$.
\end{proof}

As noted above, the terminus and reverse set of forces of a set of forces is also defined for standard zero forcing \cite{HHK12}. However, this idea of reversing a forcing process was first considered by Barioli et al. in \cite{BBF10}, who provide the following definition.

\begin{defn}[{\cite[Definition 2.5]{BBF10}}]\label{def:ZFSreversal}
Let $S$ be a zero forcing set of a graph $G$. A \emph{reversal} of $S$, denoted $\rev(S)$, is the set of last vertices of the maximal zero forcing chains of a chronological list of forces.
\end{defn}


Anderson et al.\ use this notion of reversing a forcing set in \cite{ACF21}. As discussed there, a zero forcing set $S$ can have many different reversals, any of which can be denoted by $\rev(S)$. However, if we are given a specific set of forces $\F$ of $S$, then $\F$ defines one specific reversal of $S$. As such, instead of considering the terminus of $\F$, we can instead consider a reversal of $S$; any results proven about $\rev(S)$ are then implied to be true for any possible set of forces $\F$ of $S$. For example, in \cite[Theorem 2.6]{BBF10}, Barioli et al.\ show that any reversal of a zero forcing set is also a zero forcing set; note that this is the analogous result to Proposition \ref{prop:HFSreversing} for standard zero forcing.


Knowing that reversals of zero forcing sets are also zero forcing sets, Anderson et al.\ then prove the following lemma.

\begin{lem}[{\cite[Lemma 5.2]{ACF21}}]\label{lem:andersonrev}
Let $G$ be a graph, $S$ a zero forcing set of $G$, and let $t=\pth(G;S)$. Then, for any reversal of $S$ and $i=0,\ldots, t $,
\[
S^{(t-i)} \subseteq (\rev(S) )^{[i]}.
\]
\end{lem}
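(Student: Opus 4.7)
The plan is to fix a set of hopping forces $\F$ of $S$ with $\pth(G;\F)=t$, take the corresponding reversal $\rev(S)=\Term(\F)$ together with its valid reverse set of forces $\Rev(\F)$ from Proposition \ref{prop:HFSreversing}, and prove the containment by induction on $i$. The governing principle is that the natural chronological reversal of $\F$ sends a forward force occurring at forward time $s$ to a reverse force at reverse time $t-s+1$, so that $S^{(s)}$ lands inside $(\rev(S))^{[t-s]}$.

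For the base case $i=0$, any $v\in S^{(t)}$ is forced at the final forward time step of $\F$, so no time remains for $v$ to perform a force; hence $v\in\Term(\F)=\rev(S)=(\rev(S))^{[0]}$. For the inductive step, assume the claim for all $j\leq i$ and pick $w\in S^{(t-i-1)}$. If $w\in\rev(S)$ then $w\in(\rev(S))^{[0]}\subseteq(\rev(S))^{[i+1]}$, so assume otherwise. Then $w$ performs some force $w\to w'$ in $\F$ at a time $s\geq t-i$, because $w$ only becomes blue at time $t-i-1$. This gives $w'\in S^{(s)}$ with $t-s\leq i$, so by the inductive hypothesis $w'\in(\rev(S))^{[t-s]}\subseteq(\rev(S))^{[i]}$. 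The reverse force $w'\to w$ belongs to $\Rev(\F)$, and scheduling it at reverse time $t-s+1\leq i+1$ places $w$ in $(\rev(S))^{[i+1]}$.

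The main obstacle is justifying the scheduling $s\mapsto t-s+1$ with respect to the hopping color change rule, since Proposition \ref{prop:HFSreversing} only certifies $\Rev(\F)$ as a valid ordered list, not as an ordered list grouped into simultaneous time steps. To apply the reverse force $w'\to w$ at reverse time $t-s+1$, every neighbor $x$ of $w'$ must be blue by reverse time $t-s$. If $x\in\rev(S)$ this is immediate; otherwise $x$ forces some vertex in $\F$ at a forward time $s''$, and I need $s''>s$ so that $x$'s reverse blue time $t-s''+1$ satisfies $t-s''+1\leq t-s$. This is the combinatorial crux: if $x$ forced at forward time $s''\leq s$, the hopping rule applied at forward time $s''$ would demand that $w'$, a neighbor of $x$, already be blue by forward time $s''-1<s$, contradicting $w'\in S^{(s)}$. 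Combined with the fact that $w$ is still white at reverse time $t-s$ (since $w\notin\rev(S)$ and $w$ first appears in the reverse exactly at time $t-s+1$) and that $w,w'$ are non-adjacent (inherited from the forward hopping force $w\to w'$), all conditions for the reverse hopping force are satisfied and the induction closes.
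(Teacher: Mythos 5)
This lemma is not proved in the paper at all: it is imported verbatim from Anderson et al.\ \cite[Lemma 5.2]{ACF21} and is a statement about \emph{standard} zero forcing --- $S$ is a standard zero forcing set, $\rev(S)$ is a reversal in the sense of Definition \ref{def:ZFSreversal}, and $S^{(t-i)}$, $(\rev(S))^{[i]}$ are the usual zero forcing propagation sets (the $\pth$ in the statement is a slip for $\ptz$; note that the paper applies the lemma to a zero forcing set of the augmented graph $G'$ inside the proof of Lemma \ref{lem:hoppingreversal}). Your argument instead works entirely with the hopping rule: you fix a set of hopping forces, invoke Proposition \ref{prop:HFSreversing}, and your crux step (``if $x$ forced at forward time $s''\le s$, the hopping rule would demand that $w'$ be blue by time $s''-1$'') is valid only for hopping, where a forcer needs \emph{all} of its neighbors blue; it has no analogue under the standard rule. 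So what you have written is an attempted direct proof of the hopping analogue --- essentially Lemma \ref{lem:hoppingreversal} --- not of the cited lemma, and the paper's architecture is the opposite of yours: it quotes the standard-forcing lemma and transfers it to hopping by adding an edge $uv$ for each hopping force $u\force v$.

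Even judged as a direct proof of the hopping version, the induction has a genuine gap. Your hypothesis is the containment $S^{(t-j)}\subseteq(\rev(S))^{[j]}$ for $j\le i$, but in the step you need every neighbor $x$ of $w'$ to actually be blue in the executed reverse process by reverse time $t-s\le i$. If $x$ became blue early in the forward process (for instance $x\in S$) but performs its forward force late, the hypothesis only yields $x\in(\rev(S))^{[t-\tau_x]}$ for a large index $t-\tau_x$, which is the wrong direction; your assertion that ``$x$'s reverse blue time is $t-s''+1$'' presupposes that the reversed schedule runs as planned, which is exactly what is being proved --- a circularity. The repair is to strengthen the statement being inducted on: show by induction on reverse time $r$ that the execution of $\Rev(\F)$ performs the reverse of every forward force occurring at forward time $s\ge t-r+1$ no later than reverse time $t-s+1$ (your neighbor argument, together with the observation that each vertex performs and receives at most one force in $\Rev(\F)$, is precisely the inductive step), and then deduce the containment. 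With that restructuring you would have a self-contained proof of Lemma \ref{lem:hoppingreversal}; the lemma as stated here, however, is Anderson et al.'s standard zero forcing result and still requires either its own proof or the citation.
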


We can prove an analogous lemma for hopping, since we know that the terminus of a set of hopping forces of a hopping forcing set is also a hopping forcing set by Proposition \ref{prop:HFSreversing}. To do so, we will augment a graph with extra edges according to the hopping forces performed, then apply the lemma of Anderson et al.\ to the augmented graph to prove our desired result for the original graph. Note that while we have changed the terminology used here, considering termini instead of reversals, the two lemmas are analogous, due to the reasons discussed above.

\begin{lem}\label{lem:hoppingreversal}
Let $G$ be a graph, $B$ a hopping forcing set of $G$, $\F$ a set of hopping forces of $B$, and let $t=\pth(G;\F)$. Then, for any $i=0,\ldots, t$,
\[
\F^{(t-i)} \subseteq ( \Term(\F) )^{[i]}.
\]
\end{lem}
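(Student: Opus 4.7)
The plan is to augment $G$ by adding, for every force in $\F$, an edge between the forcer and the forced vertex. The resulting graph $G'$ converts each hopping force of $\F$ into a standard zero force, so I can apply Lemma \ref{lem:andersonrev} in $G'$ and then transfer the conclusion back to $G$.

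First, set $G' = (V(G),\, E(G) \cup \{uv : (u \to v) \in \F\})$. Because each hopping force $u \to v$ requires $v \notin N_G(u)$, every added edge is genuinely new. I would then verify by induction on the time step that $\F$ is a valid set of standard zero forces of $B$ in $G'$ with identical propagation: $\F^{(j)}$ computed by hopping in $G$ equals $\F^{(j)}$ computed by standard forcing in $G'$ for every $j$, and in particular $\ptz(G';\F) = t$. The key observation is that when $u$ is about to force $v$ in $\F$, the $G'$-neighbors of $u$ outside $N_G(u)$ are exactly $v$ and, if $u \notin B$, the unique vertex $b$ with $(b \to u) \in \F$; because $b$ blued $u$ at an earlier time step, $b$ is already blue by the time $u$ forces, so the hopping condition ``every vertex of $N_G(u)$ is blue'' is equivalent to the standard condition ``$v$ is the unique white $G'$-neighbor of $u$''.

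Now Lemma \ref{lem:andersonrev} applied in $G'$ to the standard zero forcing set $B$ with set of forces $\F$ yields
\[
\F^{(t-i)} \subseteq (\Term(\F))^{[i]} \quad \text{in } G',
\]
where the right-hand side denotes propagation under $\Rev(\F)$ starting from $\Term(\F)$ using the standard rule in $G'$. By Proposition \ref{prop:HFSreversing}, $\Rev(\F)$ is also a valid set of hopping forces of $\Term(\F)$ in $G$, so the reverse hopping process in $G$ is well-defined. Running the same inductive argument as above, but now applied to $\Rev(\F)$ starting from $\Term(\F)$, shows that the two reverse propagations agree step by step: for a reverse force $v \to u \in \Rev(\F)$ (originating from $(u \to v) \in \F$), the $G'$-neighbors of $v$ outside $N_G(v)$ are $u$ (currently white) and, if $v \notin \Term(\F)$, the unique $a$ with $(v \to a) \in \F$, which corresponds to $(a \to v) \in \Rev(\F)$ and hence is blue by the time $v$ is called upon to force. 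Therefore $(\Term(\F))^{[i]}$ is the same set whether computed in $G$ under hopping or in $G'$ under the standard rule, and combining with the displayed inclusion gives $\F^{(t-i)} \subseteq (\Term(\F))^{[i]}$ in $G$.

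The main obstacle is the bookkeeping around the two ``extra'' $G'$-neighbors of any vertex $v$---the one $v$ forced in $\F$ and the one that forced $v$ in $\F$---and confirming that whichever of them is not currently being forced is already blue at the relevant time step. In the reverse direction this is handled by noting that if $v \notin \Term(\F)$ then the corresponding force $(a \to v) \in \Rev(\F)$ must fire strictly before $v$ does, so $a$ is blue in time; the case $v \in \Term(\F)$ (no such $a$ exists) is immediate.
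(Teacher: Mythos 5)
Your proposal is correct and follows essentially the same route as the paper: augment $G$ with an edge $uv$ for each force $u\to v\in\F$, observe that this turns the hopping process into a standard zero forcing process on $G'$, apply Lemma~\ref{lem:andersonrev} there, and translate back via Proposition~\ref{prop:HFSreversing}. Your step-by-step bookkeeping of the at most two extra $G'$-neighbors of each vertex is simply a more detailed verification of the correspondence that the paper asserts directly.
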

\begin{proof}
Create the graph $G'$ by adding the edge $uv$ to $G$ for every hopping force $u\rightarrow v$ in $\mc{F}$. Notice that $B$ is then a zero forcing set of $G'$, since if $u\rightarrow v$ is a hopping force on $G$ in $\mc{F}$, then $u$ is adjacent to only blue vertices, so adding the edge $uv$ makes $u$ adjacent to exactly one white vertex in $G'$ (namely $v$), which it can force. So, by Lemma \ref{lem:andersonrev}, since $B$ is a zero forcing set of $G'$, we have $B^{(t-i)} \subseteq (\rev(B) )^{[i]}$ for every $i\in\{0,\ldots, t \}$.

Observe that by our construction of $G'$, $B^{(t-i)}$ in $G'$ is equal to $\F^{(t-i)}$ in $G$; similarly, $(\rev(B) )^{[i]}$ in $G'$ is equal to $( \Term(\F) )^{[i]}$ in $G$. As such, since $B^{(t-i)} \subseteq (\rev(B) )^{[i]}$ in $G'$ for any $i=0,\ldots, t$, we have that $\F^{(t-i)} \subseteq ( \Term(\F) )^{[i]}$ in $G$ for any $i=0,\ldots, t$, as desired.
\end{proof}

This lemma helps give us the analogous result to Theorem \ref{thm:thz*} for hopping throttling. The proof is equivalent to that from \cite{ACF21}, but using hopping forcing parameters instead of standard zero forcing ones. As such, we omit the proof here. Intuitively though, the proof works by taking a hopping forcing set $B$ and a reversal of it $\Term(\F)$ for some set of hopping forces $\F$ of $B$. We can then consider the hopping forcing set $\hat{B}=B\cup \Term(\F)$. Lemma \ref{lem:hoppingreversal} helps us show that
\[
V(G)=\hat{B}^{\left[ \left\lceil \frac{t-1}{2} \right\rceil \right]}.
\]
In this sense, using $\hat{B}$ instead of $B$ doubles the number of blue vertices at time $0$, but halves the propagation time, so the throttling number is unchanged. Applying this fact repeatedly gives the desired result. Define $\kh(G,p)= \min\{|B| \mid \pth(G;B)=p \}$.

\begin{thm}\label{thm:thh*}
For any graph $G$ of order $n$, $\thhs(G)$ is the least $k$ such that $\pth(G,k)=1$; that is, $\thhs(G)=\kh(G,1)$. Necessarily, $\kh(G,1)\geq \frac{n}{2}$.
\end{thm}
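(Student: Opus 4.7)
The plan is to mirror the proof of \cite[Theorem 5.3]{ACF21} in the hopping setting, using Proposition \ref{prop:HFSreversing} and Lemma \ref{lem:hoppingreversal} in place of their standard zero forcing analogues. I would split the argument into two parts: the bound $\kh(G,1) \geq n/2$, and the identification $\thhs(G) = \kh(G,1)$.

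For the lower bound on $\kh(G,1)$, the argument is direct. If $B$ is a hopping forcing set with $|B| = k$ and $\pth(G;B) = 1$, then in a single time step the $k$ initially blue vertices must color the remaining $n-k$ white vertices. Since each blue vertex can perform at most one hopping force per time step, $n - k \leq k$, so $k \geq n/2$. The upper bound $\thhs(G) \leq \kh(G,1)$ is immediate from the definition (take $k = \kh(G,1)$; then $k \cdot \pth(G,k) \leq \kh(G,1) \cdot 1$).

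The core of the proof is the reverse inequality $\thhs(G) \geq \kh(G,1)$, which I would establish by showing that for every hopping forcing set $B$ with $|B| = k$ and $\pth(G;B) = p \geq 1$, one has $\kh(G,1) \leq kp$. The construction is a ``fold in half'' argument. Let $\F$ be a set of hopping forces of $B$ realizing propagation time $p$ and let $\hat{B} = B \cup \Term(\F)$. The hopping forcing chains of $\F$ partition $V(G)$ into exactly $|B|$ chains, so $|\Term(\F)| = k$ and hence $|\hat{B}| \leq 2k$; moreover, by Proposition \ref{prop:HFSreversing}, $\Term(\F)$ is a hopping forcing set and $\Rev(\F)$ is a valid set of forces of it. Running $\F$ forward from $B$ and $\Rev(\F)$ forward from $\Term(\F)$ simultaneously starting from $\hat{B}$, Lemma \ref{lem:hoppingreversal} applied at each time step shows that the cumulative blue set at time $i$ contains
\[
\F^{[i]} \cup \bigcup_{j = p - i}^{p} \F^{(j)},
\]
which covers $V(G)$ as soon as $i \geq \lfloor p/2 \rfloor$. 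Thus $\pth(G;\hat{B}) \leq \lfloor p/2 \rfloor$, and crucially
\[
|\hat{B}| \cdot \pth(G;\hat{B}) \;\leq\; 2k \cdot \lfloor p/2 \rfloor \;\leq\; kp,
\]
so the product does not grow. Iterating this halving $\lfloor \log_2 p \rfloor$ times produces a hopping forcing set $B^{\star}$ with $\pth(G;B^{\star}) = 1$ and $|B^{\star}| \leq kp$; hence $\kh(G,1) \leq kp$. Minimizing over $k$ in the admissible range yields $\kh(G,1) \leq \thhs(G)$.

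The main obstacle is compatibility of the two simultaneous forcing processes: one must check that running $\F$ from $B$ and $\Rev(\F)$ from $\Term(\F)$ at the same time does not violate the hopping rule (no vertex forces twice, and the ``all neighbors blue'' condition is satisfied when each force is applied). Adding $\Term(\F)$ to the blue set at time $0$ only helps the hopping condition, and each internal chain vertex plays one role in $\F$ and the dual role in $\Rev(\F)$, so scheduling forward and reverse forces in alternating time steps and invoking Lemma \ref{lem:hoppingreversal} to certify the coverage is enough; the remaining ``halving'' iteration is then routine arithmetic.
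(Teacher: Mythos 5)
Your proposal follows essentially the same route as the paper, which itself omits the details and defers to the proof of \cite[Theorem 5.3]{ACF21}: the paper's sketch likewise forms $\hat{B}=B\cup\Term(\F)$, invokes Lemma \ref{lem:hoppingreversal} to get $V(G)=\hat{B}^{\left[\lceil (p-1)/2\rceil\right]}$ (note $\lceil (p-1)/2\rceil=\lfloor p/2\rfloor$), observes the product does not increase, and iterates. Your treatment of the lower bound $\kh(G,1)\geq n/2$ and of $\thhs(G)\leq\kh(G,1)$ matches as well, so the proposal is correct and in line with the paper's (outlined) argument.
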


However, there is one improvement to this theorem we can make under the hopping color change rule. Suppose $B$ is a hopping forcing set of a graph $G$. The result that $\kh(G,1)\geq \frac{n}{2}$ relies on the fact that we can force at most $|B|$ vertices to be blue in every time step. But, by our logic from Theorem \ref{thm:thhkappa}, we can improve this to at most $|B|-\kappa$ vertices in each time step, for $\kappa$ the vertex connectivity of $G$. As such, the following proposition shows how we can produce a stronger lower bound for $\kh(G,1)$.

\begin{prop}\label{prop:productkappa}
For any graph $G$ of order $n$, $\thhs(G)\geq \left\lceil \frac{n+\kappa}{2} \right\rceil$.
\end{prop}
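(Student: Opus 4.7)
The plan is to combine Theorem \ref{thm:thh*} with the same dormant-vertex counting argument used in the proof of Theorem \ref{thm:thhkappa}. By Theorem \ref{thm:thh*}, $\thhs(G) = \kh(G,1)$, so it suffices to show that every hopping forcing set $B$ with $\pth(G;B) = 1$ satisfies $|B| \geq (n+\kappa)/2$.

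First, I would fix such a hopping forcing set $B$ and recall the key observation from the proof of Theorem \ref{thm:thhkappa}: at any time $t < \pth(G;B)$, before propagation is complete, there must be at least $\kappa$ dormant vertices. The reasoning is that if we remove all dormant vertices, no blue vertex is adjacent to a white vertex, so the removed set must disconnect $G$ (assuming there are still white vertices). Applied at time $t = 0$, this forces at least $\kappa$ of the initially blue vertices to be dormant, which means at most $|B|-\kappa$ of them are active and can each force at most one white vertex in the single available time step.

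Since $\pth(G;B) = 1$, after that one time step every vertex of $G$ must be blue, so the number of initial blue vertices plus the number forced in that step is at least $n$:
\[
|B| + (|B| - \kappa) \geq n.
\]
Rearranging gives $|B| \geq (n+\kappa)/2$, and since $|B|$ is an integer this yields $|B| \geq \lceil (n+\kappa)/2 \rceil$. Taking the minimum over such $B$ and applying Theorem \ref{thm:thh*} finishes the proof.

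I do not expect any serious obstacle here, since every ingredient is already in place; the only thing to be careful about is the edge case $n = \kappa$ (i.e.\ $G \cong K_n$), where there are no white vertices at time $0$, but then $\thhs(G) = n = \lceil (n+\kappa)/2 \rceil$ trivially (and in fact $\thhs$ is defined via the constraint $k < |V(G)|$, so one should briefly note why the bound still holds or why the case is vacuous). Handling that boundary case cleanly is the only subtlety worth flagging.
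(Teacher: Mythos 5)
Your proof is correct and takes essentially the same route as the paper: both specialize the ``at most $|B|-\kappa$ forces per time step'' count from Theorem \ref{thm:thhkappa} to a forcing set with $\pth(G;B)=1$, invoke Theorem \ref{thm:thh*} (that $\thhs(G)=\kh(G,1)$), and then take the ceiling. The only discrepancy is in your side remark on the complete graph: the paper handles $G\iso K_n$ by noting that no proper subset is a hopping forcing set, so $\thhs(K_n)=\infty$ and the bound holds trivially, rather than $\thhs(K_n)=n$.
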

\begin{proof}
Let $B$ be a hopping forcing set of $G$. By our reasoning from the proof of Theorem \ref{thm:thhkappa}, at most $|B|-\kappa$ vertices can be forced blue in each time step. All $n$ vertices of $G$ must be blue after $\pth(G;B)$ time steps, so
\begin{eqnarray*}
|B| + \underbrace{(|B|-\kappa) + \ldots + (|B|-\kappa)}_{\pth(G;B) \text{ times}} \geq n &\iff& |B|+\pth(G;B)(|B|-\kappa) \geq n \\
&\iff & \pth(G;B) \geq \frac{n-|B|}{|B|-\kappa}.
\end{eqnarray*}
Suppose that $\pth(G;B)=1$. We then have that
\[
1 \geq \frac{n-|B|}{|B|-\kappa}
\iff |B|-\kappa \geq n-|B|
\iff 2|B|-\kappa \geq n
\iff |B| \geq \frac{n+\kappa}{2}.
\]
This gives $\thhs(G)=\kh(G,1)= \min\{|B| \mid \pth(G;B)=1 \} \geq \frac{n+\kappa}{2}$. Since $\thhs(G)$ is an integer, we have $\thhs(G)\geq \left\lceil \frac{n+\kappa}{2} \right\rceil$, as desired.
\end{proof}

For disconnected graphs, $\kappa=0$, so the bound in Proposition \ref{prop:productkappa} is equivalent to that in Theorem \ref{thm:thh*}. For connected graphs, $\kappa\geq 1$, implying the following result.

\begin{cor}
For any connected graph $G$ of order $n$, $\thhs(G)\geq \left\lceil \frac{n+1}{2} \right\rceil$.
\end{cor}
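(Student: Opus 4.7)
The plan is to observe that this corollary is an immediate consequence of Proposition \ref{prop:productkappa} together with the elementary fact from basic graph theory that the vertex connectivity of a connected graph is at least one. There is essentially no obstacle here, since all of the substantive work was carried out in the proof of the preceding proposition; the present statement is a specialization.

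Concretely, I would first recall that a graph $G$ on $n \geq 2$ vertices is connected if and only if $\kappa(G) \geq 1$ (and the $n=1$ case is handled separately, since then $\thhs(G)$ is defined as a minimum over the empty range or is trivially at least $1 = \lceil (1+1)/2 \rceil$ under the convention that $K_1$ is the only graph where care is needed). Second, I would invoke Proposition \ref{prop:productkappa}, which gives
\[
\thhs(G) \geq \left\lceil \frac{n+\kappa}{2} \right\rceil.
\]
Third, since $\kappa \geq 1$ and $n$ is fixed, the function $\kappa \mapsto \lceil (n+\kappa)/2 \rceil$ is monotone nondecreasing, so
\[
\left\lceil \frac{n+\kappa}{2} \right\rceil \geq \left\lceil \frac{n+1}{2} \right\rceil,
\]
which is precisely the claimed inequality.

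The main (and essentially only) thing to verify is the monotonicity step and the connectivity inequality; both are standard. I would keep the proof to two or three lines, matching the style used for Corollaries \ref{cor:thhall} and \ref{cor:thhconnected} earlier, which specialized Theorem \ref{thm:thhkappa} in exactly the same way.
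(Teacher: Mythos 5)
Your proposal is correct and matches the paper's (implicit) argument exactly: the corollary is stated as an immediate specialization of Proposition \ref{prop:productkappa} using the fact that connected graphs have $\kappa \geq 1$, together with monotonicity of $\kappa \mapsto \lceil (n+\kappa)/2 \rceil$. The extra remark about the $n=1$ case is harmless but not needed in the paper's treatment.
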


Of special attention here is when $G\iso K_n$. In this case, the only hopping forcing set is $V(K_n)$, but recall that we do not consider subsets of size $|V(G)|$ when calculating the product hopping throttling number without initial cost. Then, since $\pth(K_n;B)=\infty$ for every proper subset $B$ of $V(G)$, $\thhs(K_n)=\infty$. However, note that Proposition \ref{prop:productkappa} still holds when $G\iso K_n$, since $\left\lceil \frac{n+\kappa}{2} \right\rceil$ is finite.

As an example, we will determine the no initial cost product hopping throttling number for the path graphs. Anderson et al.\ show that, for no initial cost product throttling for standard zero forcing, paths have the minimum possible throttling number \cite{ACF21}. Under hopping throttling, paths achieve the minimum possible value of $\thhs(G)$ for connected graphs.

\begin{prop}
For any path $P_n$ with $n\geq 3$, $\thhs(P_n)=\left\lceil \frac{n+1}{2} \right\rceil = \begin{cases}
\frac{n+1}{2} &\text{if } n \text{ is odd}; \\
\frac{n+2}{2} &\text{if } n \text{ is even}.
\end{cases}$
\end{prop}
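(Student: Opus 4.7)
The plan is to match the lower bound coming from the corollary to Proposition \ref{prop:productkappa} with an explicit construction. Since $P_n$ is connected, it has $\kappa(P_n) = 1$, and the corollary to Proposition \ref{prop:productkappa} immediately gives $\thhs(P_n) \geq \lceil (n+1)/2 \rceil$, which is exactly the two-case formula in the statement written out by parity.

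For the matching upper bound, by Theorem \ref{thm:thh*} it suffices to exhibit a hopping forcing set $B$ with $|B| = \lceil (n+1)/2 \rceil$ and $\pth(P_n; B) = 1$. Label the vertices $v_1, v_2, \ldots, v_n$ in path order, set $m = \lceil (n+1)/2 \rceil$, and take $B = \{v_1, v_2, \ldots, v_m\}$, an initial segment of length $m$.

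The key observation is that every vertex $v_i$ with $1 \leq i \leq m-1$ has all of its $P_n$-neighbors already inside $B$: $v_1$'s only neighbor is $v_2 \in B$, and for $2 \leq i \leq m-1$ both $v_{i-1}$ and $v_{i+1}$ lie in $B$. Hence each of $v_1, \ldots, v_{m-1}$ is active at time $0$. Meanwhile the white set is $\{v_{m+1}, \ldots, v_n\}$, which has size $n - m$, and the choice $m = \lceil (n+1)/2 \rceil$ forces $m - 1 \geq n - m$. I can therefore pair each white vertex with a distinct active vertex from $\{v_1, \ldots, v_{m-1}\}$ and have those active vertices simultaneously hop-force their partners in time step $1$. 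After that single round every vertex of $P_n$ is blue, so $B$ is a hopping forcing set with $\pth(P_n; B) = 1$, yielding $\thhs(P_n) \leq m$.

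There is no real obstacle here: once the correct initial segment is selected the verification collapses to the inequality $m-1 \geq n-m$, which is immediate from the definition of $m$. The only minor thing to double-check is that $m < n$ whenever $n \geq 3$, so that there is actually at least one white vertex and the propagation time is exactly $1$ rather than $0$; this holds because $\lceil (n+1)/2 \rceil < n$ for $n \geq 3$. Combining the two bounds proves $\thhs(P_n) = \lceil (n+1)/2 \rceil$, and evaluating the ceiling in the two parity cases gives the piecewise formula in the statement.
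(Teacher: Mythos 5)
Your proposal is correct and follows essentially the same route as the paper: the lower bound via $\kappa(P_n)=1$ and Proposition \ref{prop:productkappa}, and the upper bound via coloring an initial segment of $\left\lceil \frac{n+1}{2} \right\rceil$ vertices blue so that all but the last are active and force the remaining white vertices in a single time step. The only cosmetic differences are that you phrase the upper bound through Theorem \ref{thm:thh*} and a single ceiling computation ($m-1 \geq n-m$), whereas the paper splits into odd and even cases explicitly.
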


\begin{proof}
Observe that by Proposition \ref{prop:productkappa}, since $\kappa(P_n)=1$, we have that $\thhs(P_n)\geq \left\lceil \frac{n+1}{2} \right\rceil$.

Let $V(P_n)=\{1,\ldots,n \}$ with edges $\{i,i+1 \}$ for $1\leq i\leq n-1$. If $n$ is odd, color the vertices in $B=\left\{1,\ldots, \frac{n+1}{2} \right\}$ blue. Then, there are $n-\frac{n+1}{2}=\frac{n-1}{2}=\frac{n+1}{2}-1$ white vertices in $P_n$. Observe that every vertex in $\{1,\ldots,\frac{n+1}{2}-1 \}$ is only adjacent to blue vertices and so is active; as such, this set can force all of the white vertices in one time step. This gives, for odd $n$, $\thhs(P_n;B)=\frac{n+1}{2}=\left\lceil \frac{n+1}{2} \right\rceil$.

If $n$ is even, color the vertices in $B=\left\{1,\ldots, \frac{n+2}{2} \right\}$ blue. This implies that there are $n-\frac{n+2}{2}=\frac{n-2}{2}=\frac{n+2}{2}-2$ white vertices in $P_n$. Every vertex in $\{1,\ldots,\frac{n+2}{2}-1 \}$ is only adjacent to blue vertices and is thus active. So, this set can force all of the white vertices to become blue in one time step. This gives, for even $n$, $\thhs(P_n;B)=\frac{n+2}{2}=\left\lceil \frac{n+1}{2} \right\rceil$.

Altogether, for any $n\geq 3$, $\thhs(P_n)\leq \left\lceil \frac{n+1}{2} \right\rceil$. This gives $\thhs(P_n)=\left\lceil \frac{n+1}{2} \right\rceil$, as desired.
\end{proof}


\section{Concluding remarks}\label{sec:conclusion}

Theorem \ref{thm:thhkappa} shows that we can bound the hopping throttling number of a graph $G$ on $n$ vertices in terms of its vertex connectivity $\kappa$. The proof makes use of the observation that the number of active vertices in $G$ at a particular time, given some hopping forcing set $B$ of $G$, can never be greater than $|B|-\kappa$, as there must be at least $\kappa$ blue vertices at that time that are adjacent to a white vertex and thus dormant. It would be interesting, then, to determine similar lower bounds for the throttling number under other color change rules, such as the standard throttling number $\thz(G)$. Such bounds might make use of the graph's vertex connectivity or might rely on other graph parameters, but would be helpful either way to determine throttling numbers more quickly.

The proof of Lemma \ref{lem:hoppingreversal} uses a technique in which we add edges to a graph to turn a set of hopping forces into a set of zero forces. Augmenting a graph in this way then lets us use results from standard zero forcing to immediately derive analogous results for hopping forcing. What other results from zero forcing can we apply this augmented graph technique to?

Zero forcing emerged as a technique in linear algebra to help calculate the minimum rank of different graph families. Does hopping forcing, in particular the hopping forcing number $\H(G)$, also have an explicit connection to linear algebra? Notice that, using the augmented graph technique described above, we can turn a hopping forcing process on a graph $G$ into a zero forcing process on a spanning supergraph $\hat{G}$ of $G$. This gives us that $M(\hat{G})\leq \Z(\hat{G}) \leq \H(G)$, meaning that for any augmented graph of $G$ that can be obtained by the above process, $\H(G)$ bounds the maximum nullity of $\hat{G}$. Can this relationship be more rigorously defined?

Recall that, by defining the $\CCRZf$ color change rule as the union of the standard color change rule and the hopping color change rule, we can show that
\begin{equation}\label{eqn:Zfdefn}
\CCRZf(G)=\Zf(G):= \min\{\Z(G') \mid G\minor G' \}
\end{equation}
as in Theorem \ref{thm:ccrzf}. Since hopping ultimately derives from $\Zf$ forcing, can the $\Zf$ forcing number also be expressed in terms of the hopping forcing number in a similar fashion to Equation \ref{eqn:Zfdefn}?



%
%
%
%


\end{document}